\DeclareFontFamily{U}{BOONDOX-calo}{\skewchar\font=45 }
\DeclareFontShape{U}{BOONDOX-calo}{m}{n}{
  <-> s*[1.05] BOONDOX-r-calo}{}
\DeclareFontShape{U}{BOONDOX-calo}{b}{n}{
  <-> s*[1.05] BOONDOX-b-calo}{}
\DeclareMathAlphabet{\mathcalboondox}{U}{BOONDOX-calo}{m}{n}
\SetMathAlphabet{\mathcalboondox}{bold}{U}{BOONDOX-calo}{b}{n}
\DeclareMathAlphabet{\mathbcalboondox}{U}{BOONDOX-calo}{b}{n}
\newcommand{\mcb}[1]{{\mathcalboondox #1}}
\definecolor{brightcerulean}{rgb}{0.11, 0.67, 0.84}
\definecolor{cerulean}{rgb}{0.0, 0.48, 0.65}
\definecolor{Gray}{rgb}{0.5, 0.5, 0.5}
\definecolor{columbiablue}{rgb}{0.61, 0.87, 1.0}
\definecolor{OliveGreen}{rgb}{0, 0.26, 0.15}
\definecolor{dark red}{rgb}{0.7, 0., 0.2}
\definecolor{dark red 2}{rgb}{0.81, 0.09, 0.13}
\definecolor{dark blue}{rgb}{0, 0.18, 0.39}
\definecolor{dark blue 2}{rgb}{0.03, 0.27, 0.49}
\definecolor{dark green2}{rgb}{0.07, 0.53, 0.03}
\definecolor{dark green}{rgb}{0, 0.44, 0}
\definecolor{lightcarminepink}{rgb}{0.9, 0.4, 0.38}
\definecolor{lightcoral}{rgb}{0.94, 0.5, 0.5}
\definecolor{lightcornflowerblue}{rgb}{0.6, 0.81, 0.93}
\definecolor{lightcyan}{rgb}{0.88, 1.0, 1.0}
\definecolor{lavenderblush}{rgb}{1.0, 0.94, 0.96}
\definecolor{deeppeach}{rgb}{1.0, 0.8, 0.64}
\definecolor{darkchampagne}{rgb}{0.76, 0.7, 0.5}
\definecolor{desertsand}{rgb}{0.93, 0.79, 0.69}
\definecolor{classicrose}{rgb}{0.98, 0.8, 0.91}
\definecolor{myyeallow}{rgb}{0.98, 0.91, 0.71}
\definecolor{carolinablue}{rgb}{.1, .7, 1}
\definecolor{antiquewhite}{rgb}{0.98, 0.92, 0.84}
\definecolor{mycolor}{rgb}{0.98, 0.91, 0.71}
\definecolor{darkerblue}{rgb}{0.45, 0.7, 1.0}
\definecolor{limes}{rgb}{0.2, .9, 0.2}
\definecolor{softgreen}{rgb}{0.8, 1, 0.8}
\definecolor{softblue}{rgb}{.9, .9, 1}
\definecolor{camel}{rgb}{0.76, 0.6, 0.42}
\definecolor{brightcerulean}{rgb}{0.11, 0.67, 0.84}
\definecolor{cerulean}{rgb}{0.0, 0.48, 0.65}
\definecolor{Gray}{rgb}{0.5, 0.5, 0.5}
\definecolor{blizzardblue}{rgb}{0.67, 0.9, 0.93}
\definecolor{cyan}{rgb}{0.0, 1.0, 1.0}
\definecolor{mauve}{rgb}{0.86, 0.82, 1.0}
\definecolor{paleblue}{rgb}{0.2, 0.85, 0.97}
\definecolor{bronze}{rgb}{0.8, 0.5, 0.2}
\definecolor{nogreen}{rgb}{.9, 0.1, 0.3}
\definecolor{newblue}{rgb}{.1,.4,1}
\newcommand{\E}{\mathbbm{E}}
\newcommand{\Id}{\mathbbm{1}}
\newcommand{\R}{\mathbb{R}}
\newcommand{\N}{\mathbb{N}}
\newcommand{\Z}{\mathbb{Z}}
\newcommand{\T}{\mathbb{T}}
\newcommand{\dd}{\mathrm{d}}
\newcommand{\eps}{\varepsilon}
\newcommand{\cF}{\mcb F}
\def\one{\mathrm{(i)}}
\def\two{\mathrm{(ii)}}
\def\three{\mathrm{(iii)}}
\newcommand{\ola}[1]{\overleftarrow{#1}}
\newcommand{\ora}[1]{\overrightarrow{#1}}
\DeclareMathOperator{\sgn}{sgn}
\DeclareMathAlphabet{\mathpzc}{OT1}{pzc}{m}{it}
\newtheorem{prop}{Proposition}[section]
\newtheorem{thm}[prop]{Theorem}
\newtheorem{lem}[prop]{Lemma}
\newtheorem{defin}[prop]{Definition}
\newtheorem{cla}[prop]{Claim}
\newtheorem{remark}[prop]{Remark}
\newenvironment{rem}{\begin{remark}\normalfont}{\end{remark}}
\numberwithin{equation}{section}
\definecolor{verdescuro}{RGB}{0,120,0}
\title{From ABC to KPZ}
\author{G. Cannizzaro\thanks{University of Warwick, Department of Statistics, Zeeman Building, CV4 7AL, Coventry. E-mail: {\tt giuseppe.cannizzaro@warwick.ac.uk}} \and P. Gon\c calves\thanks{Instituto Superior T\'ecnico, Department of Mathematics, Av. Rovisco Pais 1, 1049-001, Lisbon. E-mail: {\tt pgoncalves@tecnico.ulisboa.pt}} \and R. Misturini\thanks{Universidade Federal do Rio Grande do Sul, Instituto de Matem\'atica e Estat\'istica. Av. Bento Gon\c calves, 9500. CEP 91509-900, Porto Alegre, Brazil. E-mail: {\tt ricardo.misturini@ufrgs.br}}
\and A. Occelli\thanks{LAREMA, Universit\'e d'Angers, 2 Bd de Lavoisier, 49045 Angers, France. E-mail: {\tt alessandra.occelli@univ-angers.fr}}}
\begin{document}

\maketitle

\begin{abstract}
We study the equilibrium fluctuations of an interacting particle system evolving on the discrete ring 
{with $N\in\mathbb N$ points, denoted by $\mathbb T_N$}, 
and with three species of particles that we name $A,B$ and $C$, but such that at each site there is only one particle. 
We prove that proper choices of  density fluctuation fields (that match those from nonlinear fluctuating 
hydrodynamics theory) associated to the {(two)} conserved quantities converge, 
in the limit $N\to\infty$, to a system of  stochastic partial differential equations, that can either be 
the Ornstein-Uhlenbeck equation or the {Stochastic Burgers equation.} 
{To understand the cross interaction between the two conserved quantities, we derive a general 
version of the Riemann-Lebesgue lemma which is of independent interest. }
\end{abstract}


\section{Introduction}

One of the major open problems in statistical mechanics is the characterisation of the macroscopic evolution equations from the large-scale description of the conserved quantities in Newtonian particle systems. By replacing a deterministic dynamics with a stochastic one, many mathematical techniques have been developed in the last forty years and very interesting results have been obtained. The first class of results is related to the well-known hydrodynamic limit, which consists of deriving the space-time evolution equations for the conserved quantities of a system from the underlying random evolution of its microscopic {counterpart}. The second  is related to the description of the fluctuations of the random microscopic system around its typical profile.  In the former case, the limit is deterministic and given in terms of a solution to a PDE, the hydrodynamic equation; while  in the  latter, the limit is random and given in terms of a solution to a stochastic PDE.
The focus of this article {is on the second problem} and more specifically our goal is to derive the fluctuations 
for a model that has {\it more than one} conservation law. To illustrate our results we first give a brief overview on  
what happens for a simplified version of our model which has only one conservation law. 

\subsection{The exclusion process}
{One of the most studied interacting particle systems (and perhaps the most classical one)}  is the exclusion process. Let us consider its evolution on the discrete one dimensional torus with $N$ sites, that we denote by $\mathbb T_N$. At each site of $\mathbb T_N$ there can be at most one particle and, after an exponential clock of rate $1$, particles at the bond $\{x,y\}$ exchange their positions {independently} at a rate given by a transition probability  $p(\cdot)$. We restrict ourselves to the case in which particles only move to {nearest neighbour sites} so that if $z$ is the size of the jump, then  $p(z)=0$ if $|z|>1.$ 
When $p(\cdot)$ is symmetric i.e. $p(1)=p(-1)=1/2$, the system is the well known symmetric simple exclusion process (SSEP); when $p(1)=1-p(-1)=p\neq 1/2 $ the system is the  {so-called} asymmetric simple  exclusion process (ASEP) and when $p(1)-p(-1)=E/N^\gamma$, $\gamma\in[0,\infty]$, the system is the weakly asymmetric simple exclusion process (WASEP), which interpolates {between} the SSEP (for $\gamma=\infty$) and the ASEP (for $\gamma=0$ and $E=2p-1$). Since particles only swap positions on $\T_N$, the conserved quantity is the number of particles, i.e. $\sum_{x\in\mathbb T_N}\eta_t(x)$ where the variable $\eta_t(x)$ denotes the number of particles at site $x$ and time $t$. From the exclusion  rule $\eta_t(x)\in\{0,1\}$ and if $\eta_t(x)=1$, we say that the site $x$ is occupied, and empty otherwise.  
The invariant measures are the Bernoulli product measures of parameter $\varrho\in(0,1)$, that indicates the density of particles.

We consider the system speeded up in the time scale $tN^a$, where $a>0$ and we define the empirical measure associated to the unique conserved quantity, i.e. the number of particles, as       
                                    $$\pi_t^{N}(du): =\frac 1N \sum _{x\in \mathbb T _N} \eta_{tN^a}(x)\, \delta_{\frac xN}\left( du\right),$$
                                    {where for $\tilde u\in\mathbb T$ the notation $\delta_{\tilde u}(du)$ is for the Dirac measure at $\tilde u$.}                                     
The hydrodynamic equation for $\pi^{N}$ depends on the asymmetry, which in turn determines the relevant time-scale: 
for SSEP the time scale is diffusive, $tN^2$, and {the hydrodynamic equation}  is the heat equation given by $\partial_t\varrho(t,u)=\tfrac12\Delta \varrho(t,u)$; for ASEP the time scale is hyperbolic, $tN$, and the equation is the inviscid Burgers equation $\partial_t\varrho(t,u)= E\nabla F(\varrho(t,u))$, for $F(\varrho)=\varrho(1-\varrho)$; for WASEP with $\gamma=1$, 
the time scale is again diffusive, $tN^2$, and the equation is the viscous Burgers' equation $$\partial_t \varrho(t,u)=1/2\Delta \varrho(t,u)+E\nabla F(\varrho(t,u))\,,$$ where $F$ is the same as before, see \cite{DPS} for a proof on the weakly asymmetric case and references therein for the other cases.

Now we {turn to the description of} the fluctuations around the hydrodynamic limit. The study of non-equilibrium fluctuations is usually very intricate as it requires a good knowledge of the decay of correlations of the system, which is generally difficult to have. From now on, we assume that the system starts from the Bernoulli product measure of parameter $\varrho$. 
We define the fluctuation field associated to the density 
as the linear functional defined on a smooth test function $f$ as
 $${\mathcal{Y}^{N}_t({f})=\frac{1}{\sqrt N} \sum_{x\in\mathbb{T}_N}f(\tfrac xN)(\eta_{tN^{a}}(x)-\varrho) }.$$
Once again, the strength of the asymmetry plays a crucial role in the type of limit $\mathcal Y$ 
we can get for $\mathcal Y^N$: for SSEP in the diffusive time scaling $tN^2$, 
it is given by the solution to a Ornstein--Uhlenbeck equation; 
for WASEP with weak asymmetry, i.e. $\gamma>1/2 $, and under diffusive scaling, 
by the solution to a Ornstein--Uhlenbeck equation, 
while for $\gamma=1/2$ by the energy solution to the stochastic Burgers {(SB)} equation 
(see Definition \ref{def:energy_solution}); and finally for the ASEP in hyperbolic scaling $tN$, 
the fluctuations are linearly transported in time with a velocity given by $(1-2\varrho)(1-2p)$. 
Note that if $\varrho=1/2$, then the evolution for the limit field in ASEP in the hyperbolic time scale is trivial 
and the same holds even for $\varrho\neq 1/2$, as can be {seen} by redefining the field in a time moving frame with the velocity $(1-2\varrho)(p-q)N^{a-1}t$. 
To get non-trivial fluctuations we have to speed up time and choose $a=3/2$. Upon doing so 
the limiting field is the so-called KPZ fixed point (KPZ-fp)~\cite{QS}, which has been constructed in \cite{MQR}. In \cite{G} it was proved that up to the time scale $tN^\delta$ with $\delta<1/3$  there is no evolution of the field.  In fact, the results of \cite{G} applied to WASEP show that below the line $a=\tfrac43(\gamma+1)$ the {evolution is trivial}, but in fact, {this behaviour} goes up to the line $a=\gamma +3/2$ and in all this line the limit is the KPZ-fp, see \cite{QS}.

Summarising, for simple exclusion processes by  changing the strength of the asymmetry, the system has either diffusive behaviour (Gaussian fluctuations, for $\gamma>1/2$), or KPZ behaviour (when $\gamma\leq 1/2$) and in between, for $\gamma=1/2$, there is the {SB equation}. 
In fact, these large-scale statistics can be obtained from a variety of different {microscopic} models and, {therefore, they are universal, in the sense that the limit does not depend on the details of the underlying microscopic model}. 

\subsection{Universality for one component systems}

 To characterise the universality classes for one-component systems, 
 let $h=h(t,u)$ be the stochastic process encoding the quantity of interest for our model (usually called height function), 
 and $r, z$ be two constants. We consider the space-time renormalization group operator with exponents $1:1/r:z/r$ given by
 $$\mathfrak R_\lambda h(t,u)=\lambda^{-1}h(t\lambda^{z/r},u\lambda^{-1/r}).$$ 
 A universality class consists of the basin of attraction of the limit  $H:=\lim_{\lambda\to0}\mathfrak R_\lambda h$ under the operation of rescaling $\mathfrak R_\lambda$ defined above. 
For one component systems, as the exclusion processes described above, several universality classes arise: the Edwards-Wilkinson (EW) class whose exponents are 1:2:4 and the super-diffusive KPZ class, whose scaling exponents are 1:2:3.  The connection between these two universality classes, namely the EW and the KPZ, is the KPZ equation or the SBE. 
In {the} case of the simple exclusion process for an asymmetry of order $O(N^{-\gamma})$, the crossover goes from diffusive behaviour, i.e. the EW class (corresponding to the phase where symmetry dominates, $\gamma>1/2$), to KPZ for 
{$\gamma<1/2$} and time scale $a=\frac32+\gamma$, and the transition goes through the  SB (or the KPZ) equation, corresponding to the phase where both symmetry and asymmetry have the same impact $\gamma=1/2$, see \cite{QS,Vi}.

We highlight that the list of universality classes is not exhausted by those described above. In \cite{CH}, it was introduced 
a temperature-dependent model which reduces to the classical ballistic deposition model at zero temperature (and thus conjectured to display KPZ-type fluctuations), 
but whose {infinite}-temperature version is a random interface whose large-scale statistics are neither EW nor KPZ. 
Its scaling limit is the Brownian Castle (BC), a renormalization fixed point, whose scaling exponents are 1:1:2. 
The {BC} is itself conjectured to be universal, in that any interface model which displays 
both horizontal and vertical fluctuations but no smoothing should belong to its basin of attraction. 
Moreover, while for SEP it is the asymmetry whose tuning determines the crossover from EW to KPZ 
and under a suitable scaling leads to the SB equation, for the connection between BC and EW a similar role is played by the smoothing. 
As shown in~\cite{CHS}, there is an uncountable family of (different) processes, the so-called $\nu$-Brownian Castles, 
for $\nu$ a probability measure on $[0,1]$, which interpolate BC and EW and therefore represent the analogue
of the SB equation in this context.

\subsection{Nonlinear Fluctuation Hydrodynamic Theory }

Universality classes are identified by exponents and scaling functions that characterise the macroscopic behaviour of the fluctuations of the thermodynamical quantities of interest in a microscopic system.  To see what universality classes might pop up and how the aforementioned exponents and scaling functions arise, we outline the approach of  Nonlinear Fluctuation Hydrodynamics Theory (NLFH), which precisely describes the fluctuations of the conserved quantities of multi-component systems in terms of stochastic PDEs. The starting point of NLFH is the hydrodynamic scenario in a strong asymmetric regime (to make an analogy to WASEP, this corresponds to $\gamma=0$ or simply bear in mind ASEP). 

Suppose that our microscopic system has a family of invariant measures parametrised by a quantity $\varrho$ and 
let us denote by {$\mu_\varrho$} the corresponding measure, 
with $\langle \cdot \rangle_{\mu_\varrho}$ the average with respect to it. 
Assume that  the hydrodynamic equation for the thermodynamical quantity of interest is (in the hyperbolic scaling $tN$) given by $$\partial_t \varrho(t,u)+\partial_x j(t,u)=0,$$ where  
$j(t,u)=\langle j \rangle_{\mu_{\varrho(t,u)}}$ and $j$ is the instantaneous current of the system, {and from now on we will always denote the derivative with respect to the first variable (i.e. time) by $\partial_t$ and $\partial_x$ that with respect to 
the second (i.e. space)}. Writing the current in terms of the density $\varrho$, then we have $\partial_x j(\varrho(t,u))= j'(\varrho(t,u))\partial_x \varrho(t,u)$ so that the previous display reads
$$\partial_t \varrho(t,u)+j'(\varrho(t,u))\partial_x\varrho(t,u)=0.$$
Now we add a diffusion term $D\partial_x^2 \varrho(t,u)$ and a conservative noise $B\partial_x \xi $, where $\xi$ is a space-time white-noise.  
The idea now consists of expanding $\varrho(t,u)$ around its stationary value $\varrho$ as $\varrho(t,u)=\varrho+Y(t,u)$. Before proceeding, let us see what happens in ASEP. In this case the averaged current is given by $j(\varrho)=(p-q)\varrho(1-\varrho)$. For simplicity let us take $p=1$, but the same argument works for any $p\in(1/2,1]$. Then we obtain 
$$\partial_t Y(t,u)=D \partial_x^2 Y(t,u)-\partial_x (1-2\varrho) Y(t,u)+\partial_x (Y(t,u))^2-B \partial_x \xi_u(t),$$
which is nothing but a stochastic Burgers equation.  
Note that in the equation above, the first {term on the right-hand} side is the diffusive term, the second is a drift that can be removed via a Galileian transformation, i.e. by looking at the system in a time dependent moving frame, 
and the third, the quadratic term, comes from the strong asymmetric regime. 
Such a term cannot be removed and is the reason why different limit behaviours are observed 
in the symmetric, in which this term is absent, and asymmetric {cases}. 

For a multi-component system, we follow the same procedure and, 
since by the NLFH theory the universal behaviour is dictated by 
the quadratic term, we neglect the higher order contributions in the expansion of $\varrho(t,u)$. 
In other words, we expand the current-density relation up to second order and we get
$$\partial_t Y(t,u)=D\Delta Y(t,u)-j'{(\varrho)}\nabla  Y(t,u)+\frac{j''(\varrho)}{2}\nabla (Y(t,u))^2-B \nabla \xi_u(t). $$ 
As above, by changing variables $u\to u-j'(\varrho)$ (Galileian transformation) the drift term disappears 
and therefore we derive once more a stochastic Burgers equation.

In order to obtain a much richer diagram (than the one for the simple exclusion explained above) 
in which different universality classes arise and interplay non-trivially with each {other}, 
we now turn our attention to multi-component systems, which are the main focus of the present paper. 
More precisely, consider a system with $M$ conserved quantities and let $\vec{\varrho}$ be the vector whose $\alpha$-th entry denotes the $\alpha$-th quantity, {$\alpha\in\{1,2,\cdots, M\}$}. 
The hydrodynamic equation is then given by a system of conservation laws: $\partial_t \varrho_\alpha(t,u)+\nabla j_{\alpha}(t,u)=0$, where $\alpha\in\{1,2,\cdots, M\}$. 
Once again, we expand the density as $\varrho_\alpha(t,u)=\varrho_\alpha+Y_\alpha(t,u)$ 
and express the associated current $j_\alpha(\vec{\varrho})$ as a function of $\vec{\varrho}$. 
Adding a diffusion and a noise term, in the form of an $M$-dimensional space-time white noise $\vec{\xi}$, and neglecting higher order contributions, we reach
$$\partial_t \vec {Y}=-\nabla\Big\{J\vec{Y}+\frac 12\sum_{\alpha=1}^M\vec{Y}^T H^\alpha\vec{Y}+D\nabla\vec{Y}+B\vec{\xi}  \Big\},$$
where $J$ is the jacobian of the current matrix whose entries are given by $J^\alpha_\beta=\frac{\partial j_\alpha}{\partial \varrho_\beta}$ and  $H^\alpha_{\beta,\delta}$ are the Hessians of the current matrix whose entries are given by {$H^\alpha_{\beta,\delta}=\frac{\partial^2 j_\alpha}{\partial \varrho_\beta \partial \varrho_\delta}$}. 
At this point we transform the fluctuation fields $\vec{Y}$ into normal fields $\vec{\phi}$ via $\vec{\phi}=R\vec{Y}$. 
The matrix $R$ is chosen to diagonalise the Jacobian $J$, i.e. $RJR^{-1}={\rm diag}(v_\alpha)$, 
where $\{v_\alpha, {\alpha\in\{1,2,\cdots, M\}}\}$ 
are the eigenvalues of $J$. The evolution of $\vec{\phi}$ is then given by
\begin{equation}\label{eq:spde_normal_field}
\partial_t \phi_\alpha(t,u)=-\nabla\Big\{v_\alpha\phi_\alpha+\vec{\phi}^TG^\alpha \vec{\phi} +(\tilde D\nabla\vec{\phi})_\alpha+(\tilde B\vec{\xi})_\alpha  \Big\}
\end{equation}
where $\tilde D= RDR^{-1}$, $\tilde B=RB$ and $G^\alpha$ are the coupling matrices given by
$$G^\alpha=\frac{1}{2}\sum_{\beta=1}^M R_{\alpha,\beta}(R^{-1})^T H^\beta R^{-1}.
$$
Note that the SPDE in~\eqref{eq:spde_normal_field} has a drift term given by $v_\alpha$, which suggests that the normal field $\phi_\alpha$ should be taken in a moving frame with velocity $v_\alpha$. 
\medskip

In order to derive the universal large-scale behaviour, the NLFH looks at the structure function of $\vec{\phi}$ 
which is defined as 
\begin{equation}\label{def:Structure}
{S_{\alpha,\beta}(t,u)=\langle \phi_\alpha(t,u)\phi_\beta(0,0)\rangle_{\mu_\varrho}\,.}
\end{equation}
Now, under the strict hyperbolicity condition which requires all velocities $v_\alpha$ to be different, 
the off-diagonal components ($\alpha\neq\beta$) of the structure function are expected to decay very fast 
and thus should not contribute to the asymptotic limit.
The behaviour of the diagonal elements, i.e. $S_\alpha(t,u)=S_{\alpha,\alpha}(t,u)$, should instead be given 
by
\[
S_\alpha(t,u)\sim (C_\alpha t)^{-1/{z_\alpha}}f_\alpha \Big((C_\alpha t)^{-1/{z_\alpha}}(u-v_\alpha t)\Big)\,,
\]
where $z_\alpha$ is a dynamical exponent and $f_\alpha$ is a scaling function. 
To deduce the value of $z_\alpha$ and 
the form of $f_\alpha$, the effect of the nonlinearity and the noise must be suitably balanced and this can be done 
via a memory kernel, that we now discuss. Since the normal 
fields solve~\eqref{eq:spde_normal_field}, we see that $S_\alpha:=S_{\alpha,\alpha}$ in~\eqref{def:Structure} satisfies
\begin{equation*}
\partial_t S_\alpha(t,u)=-v_\alpha \partial_x S_\alpha(t,u)+\tilde D_{\alpha,\alpha}\partial_x^2 S_\alpha(t,u)+\int_0^t\int_\mathbb R
dv S_{\alpha}(t-s,u-v) \partial _x^2M_{\alpha,\alpha}(s,v)
\end{equation*}
 where $M_{\alpha,\alpha}$ is the memory kernel and is given by 
\begin{equation}\label{e:memoryK}
M_{\alpha,\alpha}(s,v)=2\sum_{\beta,\delta}(G_{\beta,\gamma}^\alpha)^2 S_\beta(s,v)S_{\delta}(s,v)\,.
\end{equation}
Once again, under the strict hyperbolicity condition, the {off-diagonal} terms should not contribute 
so that the dynamical exponent $z_\alpha$ and the scaling function $f_\alpha$, and consequently the 
corresponding universality class,  
should be determined by the diagonal ones. Neglecting the {off-diagonals}, the 
memory kernel reduces to 
$M_{\alpha,\alpha}(s,v)=2\sum_{\beta}(G_{\beta,\gamma}^\alpha)^2 (S_\beta({s,v}))^2$.

Let us now describe the possible limits. To this end, we introduce the set $\mathbb I_{\alpha}$ of those indices such that
$G^\alpha_{\beta,\beta}$ in~\eqref{e:memoryK} is not zero, i.e. 
$\mathbb I_{\alpha}:=\{\beta : G^\alpha_{\beta,\beta}\neq 0\}$. 
For two component systems (which is the case for the model studied in this article) one can get 
\begin{itemize}
\item diffusive behaviour if $\mathbb I_{\alpha}=\emptyset$, i.e. the EW universality class for the field $\alpha$ corresponding to $z_\alpha=2$ and $f_\alpha$ the usual heat kernel,
\item KPZ behaviour if $\alpha\in \mathbb I_{\alpha}$, corresponding to $z_\alpha=3/2${,}
\end{itemize}
which are the same obtained for systems with only one conservation law (see the discussion above for SEP). 
But now if for the normal field $\alpha$ the self-coupling term vanishes i.e. $G_{\alpha,\alpha}^\alpha=0$, but for $\beta$ we have $\beta\in I_\alpha$, i.e. $G^\alpha_{\beta,\beta}\neq 0$, then the dynamical exponent is equal to $z_\alpha=\min_{\beta\in  I_\alpha}\{1+\frac {1}{z_\beta}\}$ and the scaling function is a $z_\alpha$-stable distribution given in Fourier space by 
\[
\hat{S}(t,k)=\frac {1}{\sqrt {2\pi}}\exp\Big\{-iv_\alpha tk-C_\alpha t|k|^{z_\alpha}\times (1-i A_\alpha \sgn(k)\tan (\tfrac{\pi z_\alpha}{2}))\Big\},
\] 
where $\hat{S}$ denotes the Fourier transform given by $\hat{S}(t,k)=\frac {1}{\sqrt {2\pi}}\int_\mathbb R S(t,u)e^{-iku}du$, $C_\alpha$ is a constant and $A_\alpha\in[-1,1]$ an asymmetry. 
Now, since $z_\alpha$ satisfies $z_\alpha=\min_{\beta\in  I_\alpha}\{1+\frac {1}{z_\beta}\}$, 
we necessarily have that for all $n$, $z_\alpha=F_{n+3}/F_{n+2}$
where $F_n$ is the Fibonacci number defined by the recursion $F_{n+2}=F_{n+1}+F_n$ and $F_1=F_2=1$. 
As a consequence, {if} in the system there is {not} a normal field with dynamical exponent $z_\alpha=2$ or $3/2$, 
then the only possibility is to have $z_\alpha=\frac{1+\sqrt 5}{2}$, the Golden number, for all the fields. The reason why only this type of $z_\alpha$-L\'evy stable distributions appear with $z_\alpha$ given as above, is still  mysterious. 
The predictions for systems with two conservation laws was first derived in \cite{SS}, 
while for $n$ conservation laws in \cite{PSSS}.

Let us stress that the assumption that the current-density relation is strictly-hyperbolic (i.e. the velocities of the normal fields are all different) is crucial because it {\it formally} allows to neglect the crossed terms 
$S_\alpha(s,v)S_\beta(s,v)$, $\beta\neq\alpha$, 
and focus only on the diagonals. 
One of the goals of this article is to give for the first time a rigorous mathematical proof of 
the fact that non-diagonal terms of coupling matrices are indeed negligible. 
To do so, we consider a multi-component model with two conservation laws that we now describe.

\subsection{The particle exchange model}\label{sec:pem}
We study a generalization of the simple exclusion process by allowing three types of particles, that we name $A,B$ and $C$. In the $ABC$ model each site of the one-dimensional torus $\mathbb{T}_N=\Z/N\Z$ is occupied by one and only one particle, which can be of type $A$, $B$ or $C$. In the \textit{classical} $ABC$ model, introduced by Evans et. al in \cite{ekkm1-1998,ekkm2-1998}, particles exchange their positions with nearest neighbours particles with the asymmetric rates: $AB\to BA$, $BC\to CB$, $CA\to AC$ with rate $q=q(N)<1$ and $BA\to AB$, $CB\to BC$, $AC\to CA$ with rate $1$. For this dynamics on the torus the invariant measure is explicitly known only in the case that the number of particles of each species are equal, in which case it is given by a Gibbs measure of a certain Hamiltonian having long range pair interactions. In this context, in the weakly asymmetric regime {$q=1-O\left(\frac{\beta}{N}\right)$} introduced by  Clincy et al. in \cite{cde2003}, we recently obtained in~\cite{GMO2023} the system of hydrodynamic equations (with boundary conditions) that describes the evolution of the density field of each species for the dynamics in a open interval connected with reservoirs (in which case the invariant measure is not explicitly know).

In the present work we consider an $ABC$ model with different rates. The $ABC$ model is a particular case of the general $n$-component particle exchange model that is presented in \cite{Schuetz17KPZ}  and, as argued therein, can also be seen as a fluctuating directed polymer in $d\geq 2$. 
We now assume that the interaction rates depend on three constants $E_A, E_B$ and $E_C$: for $(\alpha, \beta)\in\{A,B,C\}$ the transposition $(\alpha,\beta)\to(\beta,\alpha)$ occurs at rate $1+\frac{E_\alpha-E_\beta}{2N^{\gamma}}$, as it is summarised in Figure~\ref{fig:model} below.

\begin{figure}[h]
	\centering
	\begin{tikzpicture}
		\draw[line width=.35mm] (0,0) -- (11,0);
		\draw[line width=.35mm] (0,-.1) -- (0,0.1);
		\draw[line width=.35mm] (1,-.1) -- (1,0.1);
		\draw[line width=.35mm] (2,-.1) -- (2,0.1);
		\draw[line width=.35mm] (3,-.1) -- (3,0.1);
		\draw[line width=.35mm] (4,-.1) -- (4,0.1);
		\draw[line width=.35mm] (5,-.1) -- (5,0.1);
		\draw[line width=.35mm] (6,-.1) -- (6,0.1);
		\draw[line width=.35mm] (7,-.1) -- (7,0.1);
		\draw[line width=.35mm] (8,-.1) -- (8,0.1);
		\draw[line width=.35mm] (9,-.1) -- (9,0.1);
		\draw[line width=.35mm] (10,-.1) -- (10,0.1);
		\draw[line width=.35mm] (11,-.1) -- (11,0.1);
		\shade [ball color=lightcyan] (0,0.3) circle [radius=.25cm];
		\node[font=\large] at (0,0.3){ C};
		\shade [ball color=magenta] (11,0.3) circle [radius=.25cm];
		\node[font=\large] at (11,0.3){ A};
		\draw[<->,line width=.35mm] (10,0.6) to [out=60,in=110] (11,0.6);
		\node[font=\large] at (10.5,1.2){ $1+\tfrac{E_B-E_A}{2N^\gamma}$};
		\draw[<->,line width=.35mm] (0,0.6) to [out=60,in=110] (1,0.6);
		\node[font=\large] at (0.5,1.2){ $1+\tfrac{E_C-E_A}{2N^\gamma}$};
		\shade [ball color=magenta] (1,0.3) circle [radius=.25cm];
		\node[font=\large] at (1,0.3){ A};
		\shade [ball color=magenta] (2,0.3) circle [radius=.25cm];
		\shade [ball color=carolinablue] (3,0.3) circle [radius=.25cm];
		\draw[<->,line width=.35mm] (4,0.6) to [out=60,in=110] (5,0.6);
		\node[font=\large] at (4.5,1.2){ $1+\tfrac{E_B-E_C}{2N^\gamma}$};
		\shade [ball color=carolinablue] (4,0.3) circle [radius=.25cm];
		\shade [ball color=lightcyan] (5,0.3) circle [radius=.25cm];
		\shade [ball color=lightcyan] (6,0.3) circle [radius=.25cm];
		\shade [ball color=carolinablue] (7,0.3) circle [radius=.25cm];
		\shade [ball color=magenta] (8,0.3) circle [radius=.25cm];
		\shade [ball color=lightcyan] (9,0.3) circle [radius=.25cm]; 
		\shade [ball color=carolinablue] (10,0.3) circle [radius=.25cm];
		\node[font=\large] at (2,0.3){ A};
		\node[font=\large] at (3,0.3){ B};
		\node[font=\large] at (4,0.3){ B};
		\node[font=\large] at (5,0.3){ C};
		\node[font=\large] at (6,0.3){ C};
		\node[font=\large] at (7,0.3){ B};
		\node[font=\large] at (8,0.3){ A};
		\node[font=\large] at (9,0.3){ C};
		\node[font=\large] at (10,0.3){ B};
		\draw[<->,line width=.35mm] (2,.6) to [out=60,in=110] (3,.6);
		\node[font=\large] at (2.5,1.2){ $1+\tfrac{E_A-E_B}{2N^\gamma}$};
		\draw[<->,line width=.35mm] (6,.6) to [out=60,in=110] (7,.6);
		\node[font=\large] at (6.5,1.2){ $1+\tfrac{E_C-E_B}{2N^\gamma}$};
		\draw[<->,line width=.35mm] (8,.6) to [out=60,in=110] (9,.6);
		\node[font=\large] at (8.5,1.2){ $1+\tfrac{E_A-E_C}{2N^\gamma}$};

	\end{tikzpicture}
	\caption{Dynamics of the model. }\label{fig:model}
\end{figure}

It turns out, as discussed in \cite{SRB96}, that for this model, in each irreducible class, the invariant measure is uniform over all possible configurations (see Lemma \ref{lemma_invariant}). As discussed in \cite{bertini} the hydrodynamic limit for the density of particles $A$ and $B$, in the diffusive time scaling $a=2$ and for $\gamma=1$, is given by the  system of equations~\eqref{eq:hydro}. In the present work we are interested in the fluctuations around the hydrodynamic limit.

{There are three important cases to distinguish depending on  the choice of  rates, 
apart from the case $E_A=E_B=E_C$, which is trivial as the  dynamics does not distinguish particles of type $A$,  $B$ or $C$ for which the limit of the fluctuations is Ornstein--Uhlenbeck. 
The case \textbf{(I)} is when $E_A=E_B$ so that $A$ and $B$ are exchanged at rate $1$. 
The case \textbf{(II)} is when $E_B=E_C$ 
and particles of type $B$ and $C$ are exchanged at rate $1$. 
Let us already mention here that the fact that two types of particles exchange at rate $1$ suggests that one normal field 
should have diffusive behaviour. And, in case \textbf{(II)} for example, if we look at the particles of type $A$, since the dynamics does not distinguish
between $B$ and $C$, we can conclude that the field of particles of type $A$ behaves as in WASEP. The case \textbf{(III)} is the most general and all exchange rates are weakly asymmetric, 
with an intensity regulated by the parameter $\gamma$.}

We note that the dynamics above conserves  two quantities: the total number of particles of type $A$ and the total number of particles of type $B$. Nevertheless any linear combination of these two quantities is again conserved. The invariant measures of this model are explicit: for any constant densities $\rho_A$, $\rho_B$ and $\rho_C=1-\rho_A-\rho_B$, the product measure $\nu_\rho$ given on $x\in\mathbb T_N$ and $\alpha\in\{A,B,C\}$ by
$\nu_\rho(\eta: \eta(x)=\alpha)=\rho_\alpha$,
is an invariant measure. Since the dynamics of each type of particle depends on the other, 
the evolution equations are not closed and it is a priori unclear how to derive the limiting equations.  
Nevertheless, according to NLFH, the normal fields  can be identified and predictions can be made 
(see Appendix \ref{sec:mode}, in which this is done in detail).  

Alternatively, to identify the normal fields we can (and will) proceed as follows. 
First, we analyse the action of the infinitesimal generator on the occupation variables for particles of types $A$ and $B$, 
derive their instantaneous current and centre all variables. 
This then will allow us to evaluate the generator on a generic field given by a linear combination of
the centred occupation variables for $A$ and $B$. 
The expression we obtain will display drift terms that blow up in the limit and we force these 
to be zero by passing the generic field to a time moving frame. 
At this point, the derivation of the correct normal fields boils downs to solving a system of two equations 
with two unknowns: the velocity of the moving frame and the constant defining the linear combination of fields (in principle 
the constants are two, but, by linearity, one can always be fixed to be $1$). 
This procedure is carried out in Section~\ref{sec:martn} and delivers the same 
normal fields as those predicted by the NLFH theory. 

Once blowing up terms are removed from the evolution equations,  we are left with higher order terms which, 
in this specific model, are quadratic and can be written as products of the occupation variables for 
particles of type $A$ or $B$. 
The coefficients in front of these quadratic terms are nothing but  the entries the coupling matrices. 

For the multi-species {WASEP} we consider, the special structure of these coupling matrices implies that 
for the normal fields, we always have $G^\alpha_{\beta,\beta}=G^\beta_{\alpha,\alpha}=0$ 
 whatever the choice of the constants $E_\alpha$ is (see the computations in Appendix \ref{sec:mode}). 
This means that the only important contribution comes from self-coupling terms 
(i.e. the entries  $G^\alpha_{\alpha,\alpha}$ and $ G^\beta_{\beta,\beta}$).
 Therefore  the predicted  limit behaviour from NLFH  (in the strong asymmetric regime, i.e. $\gamma=0$) is either diffusive (the EW universality class)  or KPZ behaviour (in the KPZ universality class), depending on whether 
 $G^\alpha_{\alpha,\alpha}$ or $G^\beta_{\beta,\beta}$ is zero or not. 
 See Figure~\ref{fig:MCM} for a summary of the predictions.

\begin{figure}
	\begin{center}	
		\begin{tabular}{ |c | c | c | }
			\hline
			\diagbox{$G^2$}{$G^1$} & $\left(\begin{smallmatrix}\star & \cdots \\ \cdots & 0\end{smallmatrix}\right)$ & $\left(\begin{smallmatrix}0 & \cdots \\ \cdots & 0\end{smallmatrix}\right)$\\
			\hline 
			$^{\textcolor{white}{A}}_{\textcolor{white}{A}}\left(\begin{smallmatrix}0 & \cdots \\ \cdots & \star\end{smallmatrix}\right)^{\textcolor{white}{A}}_{\textcolor{white}{A}}$ & (KPZ,KPZ) & (EW,KPZ) \\
			\hline
			$^{\textcolor{white}{A}}_{\textcolor{white}{A}}\left(\begin{smallmatrix}0 & \cdots \\ \cdots & 0\end{smallmatrix}\right)^{\textcolor{white}{A}}_{\textcolor{white}{A}}$ & (KPZ,EW) & (EW,EW) \\
			\hline
		\end{tabular}
		\caption{Classification of the universal behaviour of the two modes  observed in the ABC model by the structure of the mode coupling matrices. A star denotes a non-zero entry, a dot represents an arbitrary value.}\label{fig:MCM}
	\end{center}
\end{figure}

 \subsection{Our contribution}
 
 The main contribution of this article is twofold. First, we rigorously {determine the large-scale behaviour
of the non-diagonal terms of the coupling matrices. 
In our context, this means that, the quadratic terms due to crossed products of normal modes associated to 
particles of type $A$ and $B$, {\it only interact via their initial total mass} and {\it do not produce any additional non-linear 
term in the limit}}. The result is shown in wide generality as its assumptions only 
impose mild conditions on the moments of the fields, so that 
in particular it holds irrespectively of the specific dynamics one considers 
and even allows for products of more than two fields, 
but we are restricted to the diffusive time scale. 
{Let us stress that, while on the one hand this puts on firm ground the assumption in NLFH 
that off-diagonal terms are negligible (at least in diffusive time scale), on the other we {\it do observe} a contribution 
which is though trivial as it only involves their initial total masses (which are conserved quantities of the system). 
This is something which was not predicted by mode coupling because it only looks at the fluctuations of the normal modes 
at a fixed time.  } 

Second, we show that, in the two component model we consider, if the asymmetry is weak, $\gamma>1/2$, 
then the limit of both fields is again diffusive, so the system falls in the EW universality class. 
More interestingly, we carry out a full analysis of the case $\gamma=1/2$, in which both the EW and the KPZ 
universality classes arise. 
To be precise, we prove that any normal field whose coupling matrices has only non-zero non-diagonal entries, 
has again diffusive behaviour. From this result we then show that in cases \textbf{(I)} and \textbf{(II)} 
one mode is diffusive while the other is KPZ. 
For the case \textbf{(III)}, we show instead that both fields have indeed KPZ behaviour {
(since we are restricted to the choice $\gamma=1/2$, when we say KPZ behaviour, 
we mean that the fluctuations are given in terms of the SB equation).}
In other words, we rigorously prove that, for $\gamma=1/2$, {\it all the predicted results  summarised in Figure~\ref{fig:MCM} hold true}. 
\medskip

Let us stress that since the development of NLFH theory very little advances have been made for multi-component systems (for results on the L\'evy limits with parameters  $3/2$ or $5/3$ see  \cite{BGJ1,BGJ2,BGJSS,BGJS1,BGJS2,Cane,GH,JKO,SSS}). 
For some models (as, for example, chain of oscillators {with} the Fermi--Pasta--Ulam potential), 
even the understanding of the correct linear combinations of fields has not been successfully addressed 
(apart from numerical simulations).
Up to our knowledge, prior to this work, only the case of diffusive behaviour, L\'evy-$3/2$ and L\'evy-$5/3$ 
have been derived rigorously. This is the first time in which diffusion/KPZ, as well as KPZ for both modes has 
been obtained. 
In \cite{BFS} systems of coupled KPZ equations appear but the strict hyperbolicity condition was lacking, 
so that, contrary to our case, the resulting equations are coupled. 
\medskip 

Let us now comment on the technical aspects of our work. 
To obtain the KPZ behaviour we extend the second order Boltzmann--Gibbs principle derived in \cite{GJ14, GJS17} 
to the setting of multi-component systems. The Boltzmann--Gibbs principle allows 
to replace products of occupation variables with uniform averages in big microscopic boxes, 
which then enables us to close the equations for each of the normal fields and 
ultimately identify the limit as energy solutions of the SB equation. 
In order to prove that the contribution from crossed terms is negligible, 
we need a further refinement of the aforementioned principle, according to which, 
instead of taking uniform averages, 
we suitably distribute the mass in such a way that, in the macro-limit, this can be well approximated by a smooth function. 
Once this is in place, the control over the crossed term boils down to rewrite it in Fourier and 
apply a version of Riemann--Lebesgue lemma for stochastic processes which satisfy suitable moment bounds. 
Here, the strict hyperbolicity condition is essential, as if it fails the crossed terms might survive in the limit 
(see e.g. the zero-range model studied in \cite{BFS} where this indeed happens).   
\medskip

To conclude this introduction, we mention future work and possible extensions of our result. 
The first important open problem is to go beyond the  diffusive time scale and to show that in fact the results presented in the table above can be obtained in the whole range of the parameter $\gamma>0$. Moreover, it would also be interesting to push forward our results when  the system is evolving in the infinite lattice or in the open interval but in the presence of a Glauber dynamics at the boundary.

Another possible direction is to consider particle exchange models in which more than one particle per site is allowed. 
We believe that in this case the scenario could be richer and possibly other universal behaviours might appear. 
At last, it would  be very interesting to apply our results to Hamiltonian models 
as the chain of oscillators and that in \cite{BS}. 
Our contributions might provide useful insights which could help to derive the normal fields 
and determine their large-scale behaviour.

\subsection{Outline }
In Section \ref{sec:results}
 we introduce the model and we state our main result.  In Section \ref{sec:martn} we present the computations for the martingales at the discrete level for all the cases defining the jump rates.  In Section \ref{sec:proof_main_theo} we give the proof of the limit of the fluctuation fields. To do so in Subsection  \ref{sec:tightness} we prove that the sequence of fluctuation fields is tight and in Subsection \ref{sec:char_limit} we characterize the limit fields either as solutions to Ornstein-Uhlenbeck equations or as energy solutions to the stochastic Burgers equation.  
In Section \ref{sec:cross} we prove that the contribution of crossed terms are negligible in the asymptotic limit and Section 
\ref{sec:BG} is devoted to the proof of second order Boltzmann Gibbs principles.  In the Appendix \ref{sec:mode} we present the computations of NLFH theory for our specific model and in Appendix \ref{a:RL}
 we present some auxiliary results.

\section{Statement of Results}\label{sec:results}
\subsection{The model}
We consider the discrete ring with $N$ sites, $\mathbb{T}_N=\Z/N\Z$; each site is occupied by exactly one particle, 
and such particle is of {species} $\alpha$ with $\alpha\in\{A,B,C\}$. 
The system evolves by nearest-neighbour exchanges of particles in the presence of an external field that interacts with particles of different species with different strength denoted by $E_A,E_B,E_C$.

The space of configurations is ${\Omega}_{N}=\{A,\,B,\,C\}^{\mathbb{T}_N}$; its elements are denoted with $\eta$; on each site $x\in\mathbb{T}_N$, $\eta(x)\in\{A,\,B,\,C\}$. We define the occupation numbers of the species $\alpha\in\{A,\,B,\,C\}$  as the function $\xi^{\alpha}:{\Omega}_{N}\rightarrow\{0,\,1\}^{\mathbb{T}_N}$ acting on the configurations in the following way
\[
\xi^{\alpha}_x(\eta):=\Id_{\{\alpha\}}(\eta(x))=\begin{cases}
	1 & \eta(x)=\alpha\\
	0 & \textrm{otherwise}.
\end{cases}
\]
Note that $\sum_{\alpha}\xi^{\alpha}_x(\eta)=1$
for all $x\in\mathbb{T}_N$ {and for all $\eta\in\Omega_N$}. We consider  a \textit{weakly asymmetric regime}: for a configuration $\eta\in \Omega_N$ and $x\in\mathbb T_N$ such that $(\eta_x,\eta _{x+1})=(\alpha,\beta)$  for $\alpha,\beta\in\{A,\,B,\,C\} $, $\alpha\neq \beta$, the exchange to $(\eta_x,\eta _{x+1})=(\beta,\alpha)$  occurs at rate  
\begin{equation*}
c^{\alpha,\beta}:=c^{\alpha,\beta}_{N,\gamma}=1+\frac{E_\alpha-E_\beta}{2N^\gamma}.
\end{equation*}
The total number of particles of each species, $N_\alpha(\eta)=\sum_{x\in \mathbb T_N}\xi^\alpha_x(\eta)$, $\alpha\in\{A,\,B,\,C\}$, is conserved and  $N_{A}+N_{B}+N_{C}=N$.

Given two sites $x,\,y\in\T_{N}$ and given a configuration
$\eta\in {\Omega}_{N}$, we define $\eta^{x,y}$ as
the configuration obtained by exchanging particles on sites $x$
{and} $y$, i.e.
\[
\eta^{x,y}(z)=\begin{cases}
\eta(y) & z=x\\
\eta(x) & z=y\\
\eta(z) & \textrm{otherwise}.
\end{cases}
\]
Thus we can define the process as a continuous time Markov chain with state space 
${\Omega}_{N}$ and generator $L_{N}$ acting on the functions 
$f\,:{\Omega}_{N}\rightarrow\mathbb{R}$ as
\begin{equation}\label{generator}
L_{N}f(\eta)=N^a\sum_{x\in\mathbb{T}_N}c_x(\eta)[f(\eta^{x,x+1})-f(\eta)],
\end{equation}
with
\begin{equation}\label{cxeta}
	c_x(\eta)=\sum_{\alpha,\beta}c^{\alpha,\beta}\xi^\alpha_x\xi^\beta_{x+1}
\end{equation}
where the sum runs over $\alpha\neq\beta\in\{A,B,C\}$. The dynamics is summarized in Figure \ref{fig:model}.

In \eqref{generator}, $a>0$, but everywhere in the article we will assume that $a=2$, i.e. the process will be speeded up in the diffusive time scale and this choice will be justified ahead (see e.g. Lemma~\ref{lemmaqv}).

Note that the rates satisfy the pairwise balance\footnote{Observe that this pairwise balance is not present in the classical $ABC$ that we discussed in the first paragraph of Section~\ref{sec:pem} .}. 
\begin{equation}\label{pairwise_balance}
	c^{AB}+c^{BC}+c^{CA}=c^{BA}+c^{CB}+c^{AC}.
\end{equation}
In~\cite{SRB96} a criterion to identify the invariant measure for generalised exclusion processes satisfying 
the above was showed.

\begin{lem}\label{lemma_invariant} Under condition \eqref{pairwise_balance}, any measure $\mu_N$ on $\Omega_N$ such that
	\begin{equation}\label{ivariancebytransp}
		\mu_N(\eta^{x,x+1})=\mu_N(\eta), \,\text{ for all }\,\eta\in \Omega_N \,\text{ and all }\,x\in \mathbb{T}_N,
	\end{equation} 
	is invariant for the dynamics with generator $L_N$.
\end{lem}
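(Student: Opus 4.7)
The plan is to verify invariance in the dual form, by showing $\int L_N f\, d\mu_N = 0$ for every function $f\colon\Omega_N\to\mathbb R$. Expanding the generator gives
\[
\int L_N f\, d\mu_N = N^a \sum_{x\in\mathbb T_N} \sum_\eta \mu_N(\eta)\, c_x(\eta)\,[f(\eta^{x,x+1}) - f(\eta)].
\]
First, I would perform the involution change of variable $\sigma = \eta^{x,x+1}$ in the term involving $f(\eta^{x,x+1})$; the hypothesis \eqref{ivariancebytransp} then allows $\mu_N(\sigma^{x,x+1})$ to be replaced by $\mu_N(\sigma)$, reducing the identity to the requirement
\[
\sum_\eta \mu_N(\eta)\, f(\eta) \sum_{x\in\mathbb T_N}\bigl[c_x(\eta^{x,x+1}) - c_x(\eta)\bigr] = 0.
\]
Since $f$ is arbitrary, the task boils down to checking that $g(\eta):=\sum_{x\in\mathbb T_N}[c_x(\eta^{x,x+1}) - c_x(\eta)]$ vanishes for every $\eta\in\Omega_N$.

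The key step, and the one where the pairwise balance \eqref{pairwise_balance} enters, is to turn $g$ into a telescoping sum. I would first observe that $c_x(\eta^{x,x+1})-c_x(\eta)$ depends only on the pair $(\eta(x),\eta(x+1))$: it vanishes when the two species agree, and equals $c^{\beta,\alpha}-c^{\alpha,\beta}$ when $(\eta(x),\eta(x+1))=(\alpha,\beta)$ with $\alpha\neq\beta$. I would then construct a function $F\colon\{A,B,C\}\to\mathbb R$ satisfying $c^{\beta,\alpha}-c^{\alpha,\beta}=F(\beta)-F(\alpha)$ for all species, by setting $F(A):=0$, $F(B):=c^{BA}-c^{AB}$ and $F(C):=(c^{BA}-c^{AB})+(c^{CB}-c^{BC})$; the remaining consistency relation $F(A)-F(C)=c^{AC}-c^{CA}$ is precisely \eqref{pairwise_balance}. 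With such an $F$ in hand,
\[
g(\eta)=\sum_{x\in\mathbb T_N}\bigl(F(\eta(x+1))-F(\eta(x))\bigr)=0
\]
by telescoping on the cycle $\mathbb T_N$, which finishes the proof.

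The only non-routine step is the algebraic construction of $F$ from the pairwise balance; everything else is a symmetrisation via the involution change of variable together with a telescoping argument on the ring. Note that the cyclic geometry of $\mathbb T_N$ is essential in the last step, so this argument would not close directly on an open interval without specifying the boundary behaviour. For the specific rates $c^{\alpha,\beta}=1+(E_\alpha-E_\beta)/(2N^\gamma)$ of the present model one can simply take $F(\alpha)=-E_\alpha/N^\gamma$, but phrasing the proof in terms of $F$ makes transparent that the argument works for any family of rates satisfying \eqref{pairwise_balance}.
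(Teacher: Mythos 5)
Your proof is correct and complete. The paper does not actually supply a proof of this lemma, deferring instead to the pairwise balance criterion of \cite{SRB96}, so a self-contained argument is useful here. Your route is: dualize to $\int L_N f\,d\mu_N=0$, use the involution $\eta\mapsto\eta^{x,x+1}$ together with the hypothesis $\mu_N(\eta^{x,x+1})=\mu_N(\eta)$ to reduce the statement to the purely deterministic identity $\sum_{x\in\mathbb T_N}\big[c_x(\eta^{x,x+1})-c_x(\eta)\big]=0$, then build a potential $F$ on $\{A,B,C\}$ with $c^{\beta,\alpha}-c^{\alpha,\beta}=F(\beta)-F(\alpha)$ so the sum telescopes on the ring. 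The only obstruction to defining $F$ is the consistency condition around the cycle $A\to B\to C\to A$, which you correctly identify as exactly \eqref{pairwise_balance}. This is logically tight (in particular, the case $\eta(x)=\eta(x+1)$ contributes $0$ on both sides), and your remark that the argument genuinely needs the periodic geometry is right. Compared with the criterion in \cite{SRB96}, which exhibits a bijective pairing of incoming and outgoing transitions at each state, your argument skips the explicit pairing and instead lets the potential and the cyclic boundary do the cancellation globally; it is more elementary and makes transparent where the pairwise balance and the ring structure enter, while being tailored to nearest-neighbour swap dynamics in which the rate increment $c_x(\eta^{x,x+1})-c_x(\eta)$ depends only on the unordered pair $(\eta(x),\eta(x+1))$.
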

From this it follows that, when the number of particles of each species is fixed, $n_A+n_B+n_C=N$, the canonical distribution is uniform on $\{\eta\in\Omega_N: N_A(\eta)=n_A,N_B(\eta)=n_B,N_C(\eta)=n_c\}$ with
\begin{equation}\label{canonical}
	\mu_N(\eta)=\begin{pmatrix}
		N\\ n_A,\, n_B,\, n_C
	\end{pmatrix}^{-1}=\frac{n_A!\, n_B!\, n_C!}{N!}.
\end{equation}
Moreover, for any constant densities $\rho_A$, $\rho_B$ and $\rho_C=1-\rho_A-\rho_B$, the product measure $\nu_\rho$ with $\rho=(\rho_A, \rho_B, \rho_C)$, over $x\in \mathbb T_N$, such that for all $x\in\mathbb T_N$ and $\alpha\in\{A,B,C\}$,
$\nu_\rho(\eta: \eta(x)=\alpha)=\rho_\alpha$,
is an invariant measure on $\Omega_N$. 

In the diffusive time scaling $a=2$ and for $\gamma=1$, the hydrodynamic equations for the densities of particles $A$ and $B$ are given by \footnote{This result is stated in \cite{bertini} but it was not rigorously proved.  For a more complicated dynamics with stochastic reservoirs, see the results in \cite{GMO2023}.}
{\begin{equation}\label{eq:hydro}
\partial_t \begin{pmatrix}\rho^A\\ \rho^B \end{pmatrix}=\Delta \begin{pmatrix}\rho^A\\ \rho^B \end{pmatrix}-\nabla \left(\chi(\rho)\cdot g_E \right),
\end{equation}}
where \begin{equation*}
\chi(\rho)=\begin{pmatrix} \rho^A(1-\rho^A) & -\rho^A\rho^B\\ -\rho^A\rho^B & \rho^B(1-\rho^B) \end{pmatrix}
\end{equation*} is the mobility and $g_E=\begin{pmatrix}E_A-E_C\\E_B-E_C\end{pmatrix}$. The density of particles $C$ can be recovered by $\rho^C=1-\rho^A-\rho^B$.

\subsection{Fluctuation fields}\label{sec:flucfields}

Let us denote by $\mathbb T=[0,1)$ the continuous torus and by $\mcb D(\mathbb T)$ the space of $\R$-valued smooth functions on $\mathbb T$. 
We define the density fluctuation fields ${\mcb Y}_t^N=({\mcb Y}_t^{N,A},{\mcb Y}_t^{N,B},{\mcb Y}_t^{N,C})$ as
\begin{equation}\label{eq:densityfieldNL}
{\mcb Y}_t^{N,\alpha}(du)=\frac{1}{\sqrt N}\sum_{x\in\mathbb{T}_N}\left[\xi^\alpha_x(\eta_t)-\E_{\nu_\rho}[\xi^\alpha_x(\eta_t)]\right]\delta_{\frac{x}{N}}(du),
\end{equation}
where $\alpha\in\{A,B,C\}$ and $\E_{\nu_\rho}[\cdot]$ is the expectation with respect to the invariant product measure associated to constant profiles. 
To simplify the presentation, we assume  $\rho_A=\rho_B=\rho_C=\rho=1/3$ throughout the article 
(so that trivially for any $\alpha \in\{A,B,C\}$, $\E_{\nu_\rho}[\xi^\alpha_x(\eta_t)]=1/3$).
The results for general densities can be derived from ours by taking suitable Galilean transformations, 
which centre the fields with respect to appropriate reference moving frames.

To lighten the notation, for $s\in[0,T]$, we suppress the dependence on $\eta_s$ from $\xi^\alpha$ 
and write $\xi^\alpha_x(s):=\xi^\alpha_x(\eta_s)$.
For $v\in\mathbb R$ and $t\geq 0$, we denote by $T_{v t}$ the translation operator acting on {$f\in \mcb D(\mathbb T)$} as
\begin{equation}\label{eq:translation}
T_{v t}f\left(\frac{x}{N}\right)=f\left(\frac{x-v t}{N}\right).
\end{equation}

{The large scale fluctuations of the density fields depend on the relation among 
the coupling constants $E_A, E_B$ and $E_C$, and in particular on whether they are 
all different or not. 
This leads us to distinguish some cases that we now spell out.}

\quad

{\noindent\textbf{Case (I): $E_{A}=E_B$}}

\quad

\noindent In this first case, the fields that should be considered are
\begin{equation}\label{e:case1}
\begin{aligned}
\mcb Z^{N,+}_t(f)&=\mcb Y^{N,A}_t(T_{ v_ +N^b t}f)-\mcb Y^{N,B}_t(T_{v_+ N^b t}f)\\
\mcb Z_t^{N,-}(f)&=\mcb Y^{N,A}_t(T_{v_-N^b t}f)+\mcb Y^{N,B}_t(T_{v_- N^bt}f)\,,
\end{aligned}
\end{equation} 
with   $v_\pm=\pm\frac {(E_A-E_C)}{3N^\gamma }$ and $b=a$.

\quad 

{\noindent\textbf{Case (II): $E_{B}=E_C$
}}

\noindent Now instead we should look at the fields 
\begin{equation}\label{e:Case2}
\begin{aligned}
\mcb Z_t^{N,+}(f)&=\mcb Y^{N,A}_t(T_{v_+N^bt}f)+2\mcb Y^{N,B}_t(T_{v_+N^bt}f)\,, \\
\mcb Z_t^{N,-}(f)&=\mcb Y^{N,A}_t(T_{v_-N^bt}f)\,,
\end{aligned}
\end{equation}
with   $v_\pm=\pm\frac {(E_C-E_A)}{3N^\gamma }$ and $b=a$.

\quad

{\begin{remark}
The fields for the case $E_A=E_C$ are the same as for the Case \textbf{(II)} with $A$ and $B$ interchanged.
\end{remark}}

\quad

\noindent\textbf{Case (III): $E_{B}-E_{A}\neq E_{C}-E_{A}\neq 0$
}

\noindent In this case, we have 
\begin{equation}\begin{split}\label{eq:gen_fields_n}
		&{\mcb {Z}}^{N,+}_t(f)=\mcb Y^{N,A}_t(T_{v_+ N^b t}f)+\frac{c_-}{E_A-E_{C}}\mcb Y^{N,B}_t(T_{v_+N^bt}f),\\
&\mcb {Z}^{N,-}_t(f)=\mcb Y^{N,A}_t(T_{v_-N^bt}f)+\frac{c_+}{E_A-E_{C}}\mcb Y^{N,B}_t(T_{v_-N^bt}f),\end{split}
\end{equation}
where again $b=a$, 
\begin{equation}~\label{assumption}
v_\pm=\pm\frac {\delta}{2N^\gamma }\qquad\text{and}\qquad c_\pm:=E_A-E_B\pm\tfrac32\delta\,.
\end{equation}
Here $\delta$ is given by
\begin{equation}\label{e:delta}
\delta:= \tfrac23\sqrt{(E_A-E_C)^2+(E_B-E_C)^2-(E_A-E_C)(E_B-E_C)}\,.
\end{equation}
\quad

{\begin{remark} Note that the completely symmetric case $E_A=E_B=E_C$ is contained in Cases \textbf{(I)}  and \textbf{(II)}, and observe that in this case $v_{\pm}=0$.
\end{remark}}

%
%

\begin{remark}\label{rem:(I)(II)} Case \textbf{(II)} can be obtained as a consequence of Case \textbf{(I)}, looking at the pair $(\mcb Y^{N,B}_t, \mcb Y^{N,C}_t)$ instead of $(\mcb Y^{N,A}_t, \mcb Y^{N,B}_t)$ and recalling the relation $\mcb Y^{N,A}_t+\mcb Y^{N,B}_t+\mcb Y^{N,C}_t=0$. 
For this reason, we will only explicitly treat cases \textbf{(I)} and \textbf{(III)}. 
\end{remark}

Before stating the result concerning the fluctuations of the density fields, let us introduce the notion of solution for the stochastic Burgers equation and the Ornstein--Uhlenbeck equation, as they are the equations describing the limiting behaviour. 
For the former, we follow the definition of {\it energy solutions} proposed in~\cite{GJS17}.

\subsection{Energy solution to the stochastic Burgers equation}

Let us recall that the stochastic Burgers equation is the SPDE given by
\begin{equation}\label{eq:SBE}
\partial_{t} \mcb{Z}_{t}= \Delta \mcb{Z}_{t}+{m\nabla\mcb Z_t}+ \lambda\nabla\mcb Z_t^2+\sqrt{2 \sigma^{2}} \nabla d\mathscr{W}_{t},
\end{equation}
where $\lambda\in\mathbb R$, $\sigma\neq 0$ and {$m\in\R$ are constant (and we allow $m$ to be random)},  
and $\mathscr{W}_{t}$ is a  $\mcb D'(\mathbb T)$-valued Brownian motion with covariance given on $f,g\in\mcb D(\mathbb T)$ by \begin{equation}\label{eq:cov_BM}\mathbb E[\mathscr{W}_{t}(\psi)\mathscr{W}_{s}(\varphi)]=(t\wedge s)\langle \psi,\varphi\rangle_{L^2(\mathbb T)}.\end{equation} 
{Above $\mcb D'(\mathbb T)$ denotes the space of $\mathbb R$-valued distributions on $\mathbb T $. } The notion of solution to~\eqref{eq:SBE} we will use, is that of {\it energy solutions}, first proposed in~\cite{GJ14}, 
which allows to make sense of the non-linearity in its expression. To see how this works, 
let $\left\{\imath_{\varepsilon} ; \varepsilon \in(0,1)\right\}$ be an approximation of the identity and $\varphi \in \mcb {D}(\mathbb T)$. We define the process $\left\{\mcb{B}_{t}^{\varepsilon} ; t \in[0, T]\right\}$ as\footnote{ In some cases, it is more convenient to define this process as $\int_{0}^{t} \int_{\mathbb{R}}\left(\mcb{Z}_{s} * \ola{\iota_{\varepsilon}}(u)\left(\mcb{Z}_{s} * \ora{\iota_{\varepsilon}}(u)\right)\right) \nabla \varphi(u) \,\dd u \,\dd s$, where $\ola{\iota_{\varepsilon}}(u)$ and $\ora{\iota_{\varepsilon}}(u)$ are approximations of the identity over intervals to the left or to right of $u$ respectively. Observe that the $\varepsilon\to0$ limits of this process and~\eqref{eq:q_end} coincide. The reason why we need this additional definition will be clear in Section~\ref{sec:char_limit}, see~\eqref{Beps}.}
\begin{equation}\label{eq:q_end}
	\mcb{B}_{t}^{\varepsilon}(\varphi):=\int_{0}^{t} \int_{\mathbb{R}}\left(\mcb{Z}_{s} * \imath_{\varepsilon}(u)\right)^{2} \nabla \varphi(u) \,\dd u \,\dd s,
\end{equation}
where $*$ denotes the convolution operator and $\imath_\eps(u)=\imath_\eps(u)(\cdot)=\imath_\eps(u-\cdot)$. 
In the context of energy solutions, the non-linear term in the stochastic Burgers equation is interpreted as the limit for 
$\varepsilon\to 0$ of the process $\mcb{B}_t^\varepsilon$. 
If the process $\left\{\mcb{Z}_{t} ; t \in[0, T]\right\}$ satisfies an {\it energy estimate}, 
i.e. if there exists a finite constant $C>0$ such that 
for any {$0\leq s \leq t \leq T$}, any $0<\delta \leq \varepsilon<1$ and any $f \in \mcb{D}(\mathbb T)$, we have
\begin{equation}\label{eq:energy_estimate}
	\mathbb{E}\left[\left((\mcb{B}_{s}^{\varepsilon}(\varphi)-\mcb{B}_{t}^{\varepsilon}(\varphi))-(\mcb{B}_{s}^{\delta}(\varphi)-\mcb{B}_{t}^{\delta}(\varphi))\right)^{2}\right] \leq C \varepsilon(t-s)\|\nabla \varphi\|_{2}^{2},
\end{equation}
where $\|\cdot\|_2$ is the usual $L^2(\T)$-norm, then the limit
\begin{equation}\label{eq:limitB}
	\mcb{B}_{t}(\varphi):=\lim _{\varepsilon \rightarrow 0} \mathscr{B}_{t}^{\varepsilon}(\varphi)
\end{equation}
exists in $L^2$ and does not depend on the choice of $\left\{\imath_{\varepsilon} ; \varepsilon \in(0,1)\right\}$.
Hence, we can postulate that the integral in time of the square of the distribution-valued process $\mcb{Z}_{s}$ 
is given by $\mcb{B}_{t}$ and pose the Cauchy problem for the stochastic Burgers equation. 
Nevertheless, the energy estimate~\eqref{eq:energy_estimate} alone does not guarantee uniqueness 
so it will not be sufficient to prove convergence of our sequence of density fields. 
To overcome the issue, in~\cite{gubinelli_jara}, 
the authors imposed an extra condition on the reversed problem, 
which then led to the proof of uniqueness in~\cite{gubinelli_perkowski}.  
This extra condition consists of requiring the reversed process to satisfy the same martingale problem 
as the original process, but with respect to the adjoint generator. 
Let us remark that this is very natural (and easy to check) in the context of interacting particle systems, 
since it simply corresponds to considering the same process but with reversed rates. 

We are now ready to characterise the solution to the stochastic Burgers equation with the following definition. {Let ${C}([0, T], {\mcb D}'(\mathbb T))$ (resp. $D([0,T], \mathcal D'(\mathbb T))$ be the space of continuous (resp. c\`adl\`ag) functions defined on  $[0,T]$ and taking values in ${\mcb D}'(\mathbb T)$.}

\begin{defin} ~\label{def:energy_solution}We say that a process $\left\{\mcb{Z}_{t} ; t \in[0, T]\right\}$ with trajectories in ${C}([0, T], {\mcb D}'(\mathbb T))$ is a \textit{stationary energy solution} of the stochastic Burgers equation  given in \eqref{eq:SBE}
if
	\begin{itemize}
	\item[(i)] for each $t \in[0, T]$ the $\mcb {D}^{\prime}(\mathbb{T})$-valued random variable $\mcb{Z}_{t}$ is a white noise of covariance $\sigma^{2}$;
	
	\item[(ii)] the process $\left\{\mcb{Z}_{t} ; t \in[0, T]\right\}$ satisfies the energy estimate~\eqref{eq:energy_estimate};
	
	\item[(iii)] for any $\varphi \in \mcb{D}(\mathbb T)$ and $t\in[0,T]$, the process
	\begin{equation}\label{eq:mart_prob}
		\mcb M_t(\varphi)=\mcb{Z}_{t}(\varphi)-\mcb{Z}_{0}(\varphi)-\int_{0}^{t} \mcb{Z}_{s}( \Delta\varphi) \dd s {+m\int_0^t \mcb Z_s(\nabla f)\dd s}+\lambda \mcb{B}_{t}(\varphi)
	\end{equation}
	is a continuous martingale with respect to the natural filtration associated to $\mcb{Z}_{\cdot}$, of quadratic variation
	\begin{equation*}
	\langle\mcb M(\varphi)\rangle_t=t2\sigma^2\|\nabla\varphi\|_{L^2(\mathbb T)}.
	\end{equation*}
Above the process $\{\mcb B_t, t\in[0,T]\}$ is obtained as the $L^2$-limit~\eqref{eq:limitB};
	\item[(iv)] the reversed processes $\left\{\left(\mcb{Z}_{T-t}, \mcb{B}_{T-t}-\mcb{B}_{T}\right) ; t \in[0, T]\right\}$ also satisfy (iii) with $\lambda$ replaced by $-\lambda$.
	\end{itemize}
\end{defin}

\begin{defin}\label{def:OU}
	We say that $\mcb{Z}_{t}$ is a stationary solution of the Ornstein--Uhlenbeck equation 
	\begin{equation*}
		\partial_{t} \mcb {Z}_{t}= \Delta \mcb{Z}_{t}+{m\nabla\mcb Z_t}+\sqrt{2 \sigma^{2}} \nabla d\mathscr{W}_{t},
	\end{equation*}
	if it satisfies conditions (i) and (iii) in Definition~\ref{def:energy_solution} with $\lambda=0$. \end{defin}
	
In the next theorem, we state existence and uniqueness for the energy solutions to~\eqref{eq:SBE}, {
whose existence was derived in \cite{GJ14,gubinelli_jara} and the uniqueness in~\cite{gubinelli_perkowski}} . 

\begin{thm}{~\cite[Theorem 2.4]{gubinelli_perkowski}}\label{thm:Uniqueness}
For any value of $\lambda\in\R$, $\sigma\neq 0$ {and (random) constant $m\in\R$}, 
the stochastic Burgers equation~\eqref{eq:SBE} 
has a stationary energy solution, given in Definition~\ref{def:energy_solution}, which is unique up to indistinguishability. 
\end{thm}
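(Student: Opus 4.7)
The plan is to handle existence and uniqueness separately, with uniqueness carrying the bulk of the difficulty.

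For existence, I would construct the solution as a limit of regularized approximations. Two natural choices are (a) a viscous Burgers equation with mollified nonlinearity, replacing $\nabla \mcb Z_t^2$ by $\nabla(\mcb Z_t * \imath_\varepsilon)^2$, which is locally well-posed by standard SPDE techniques, or (b) the scaling limit of the fluctuation field of an interacting particle system of the type considered in this paper, started from its stationary product measure. In either case, starting from the stationary white noise of covariance $\sigma^2$ (which is invariant for the linear part and preserved by the approximation at equilibrium), one obtains a family $\{\mcb Z^\varepsilon\}$ of stationary processes. Tightness in $C([0,T], \mcb D'(\T))$ follows from Gaussian-type moment bounds together with the martingale bounds implicit in (iii). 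Any limit point automatically satisfies (i). The energy estimate (ii) is the delicate input: for approximation (b) it is exactly the content of a second-order Boltzmann--Gibbs principle, and the uniform-in-$\varepsilon$ bound passes to the limit. Conditions (iii) and (iv) are preserved by weak convergence together with the uniform moment bounds; (iv) in particular is automatic if the approximation comes from a reversible dynamics, since reversing the rates produces the same equation with $\lambda$ replaced by $-\lambda$.

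For uniqueness I would follow the mapping/It\^o-trick strategy of~\cite{gubinelli_perkowski}. The naive subtraction of two energy solutions with the same initial data fails because the nonlinear term $\mcb B_t$ is defined only as an $L^2$ limit and is not linear in $\mcb Z$. The central technical input is a Kipnis--Varadhan-type bound
\begin{equation*}
\E\Bigl[\sup_{t\in[0,T]}\Bigl|\int_0^t F(\mcb Z_s)\,\dd s\Bigr|^2\Bigr] \;\leq\; C\, T\, \|F\|_{H^{-1}}^2
\end{equation*}
for additive functionals of the process, derived by combining the forward martingale problem (iii) with its reversed version (iv): together they play the role that genuine Markovianity would play in the standard It\^o trick. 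From such a bound one shows that $\mcb B_t$ is a measurable function of $\mcb Z_0$ and the driving martingale $\mcb M$ appearing in (iii), and hence is uniquely determined by the linear data. Testing the martingale problem against a time-dependent test function solving a dual (backward) heat equation and applying It\^o's formula then pins down the characteristic functional of $(\mcb Z_t)_{t\in[0,T]}$, yielding uniqueness in law, which the backward condition (iv) upgrades to indistinguishability.

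The main obstacle is the uniqueness half. The definition of energy solution does not include any Markov property, so the It\^o trick is not available off the shelf; the technical heart of the argument is to set it up using only the forward and backward martingale problems (iii)--(iv) together with the energy estimate (ii), and to show that this is strong enough to force $\mcb B_t$ to be a functional of $(\mcb Z_0, \mcb M)$ rather than an independent degree of freedom. A secondary subtlety in existence is verifying (ii) with the sharp scaling $\varepsilon(t-s)$, which cannot be read off from smoothing alone and requires exploiting the Gaussianity of the invariant law to cancel the leading-order contribution of the ill-defined square.
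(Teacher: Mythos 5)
This statement is not proved in the paper at all: it is imported verbatim from Gubinelli--Perkowski \cite{gubinelli_perkowski} (with existence going back to the particle-system and Galerkin constructions of \cite{GJ14,gubinelli_jara}), so there is no internal proof to compare against and the relevant benchmark is the argument of \cite{gubinelli_perkowski}. Your existence sketch, route (b), is essentially how existence was originally obtained (and is what the present paper's main theorem re-proves for a multi-species model): stationarity gives (i), the second-order Boltzmann--Gibbs principle gives the sharp energy estimate (ii), and reversing the rates gives (iv). That half is fine in outline.

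The uniqueness half has a genuine gap. You correctly identify the Kipnis--Varadhan/It\^o-trick bound, built from the forward and backward martingale problems (iii)--(iv), as the central estimate, but the deduction you propose from it is not the argument of \cite{gubinelli_perkowski} and does not close. Asserting that $\mcb B$ is a measurable functional of $(\mcb Z_0,\mcb M)$ is essentially the whole difficulty restated, not a consequence of the $H^{-1}$ bound; and testing (iii) against a time-dependent test function solving a backward heat equation only removes the \emph{linear} drift $\Delta$ --- it cannot eliminate the quadratic term, so it does not pin down the characteristic functional of a non-Gaussian process. The missing key idea is the exponential linearization: one integrates $\mcb Z$ to an energy solution $h$ of KPZ and shows, via the It\^o trick together with a generalized It\^o formula justified by (ii) and (iv), that the Cole--Hopf transform $w=e^{h}$ solves the \emph{linear} multiplicative stochastic heat equation driven by the martingale noise; classical (pathwise) uniqueness for that equation then transfers back to $\mcb Z$, which is where ``unique up to indistinguishability'' comes from. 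Relatedly, your claim that (iv) upgrades uniqueness in law to indistinguishability is backwards: (iv) is an input needed to run the It\^o trick (control of additive functionals via the time-reversed martingale decomposition), not an a posteriori upgrade.
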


{\begin{remark}
With respect to~\cite{GJ14,gubinelli_jara, gubinelli_perkowski}, 
the SBE we are considering features an additional transport term which is linear and 
therefore does not alter the existence/uniqueness theory established therein. 
\end{remark}}

\subsection{Main result}

Now we are ready to state the main results of this article, which are summarised in the next theorem.

\begin{thm}\label{thm:main_new}
In the diffusive time scaling $a=b=2$,  the sequence of processes $(\mcb Z^{N,+},{\mcb Z}^{N,-})_{N\in\mathbb N}$, whose definition depends on the values of the constants $E_A,E_B$ and $E_C$ and is given in Section~\ref{sec:flucfields}, converges in law in { $C([0,T], \mcb {D}'(\mathbb T)^2)$} to $(\mcb Z^+,\mcb Z^-)$, where $\mcb Z^+$ and $\mcb Z^-$ are stationary solutions of stochastic Burgers equations of the form
\begin{equation}	\label{eq:SBE_main}
d\mcb Z^\pm_t=\Delta\mcb Z^\pm_tdt+{m_{\pm}\nabla \mcb Z^{\pm}+}\lambda_\pm\nabla(\mcb Z^\pm)^2_t+\sqrt{2\sigma^2_\pm}\nabla d\mcb W^\pm_t,
\end{equation}
{started with independent initial conditions}, 
where $\mcb{W}^+$ and $\mcb W^-$ are {independent} $\mcb D(\mathbb T)'$-valued Brownian motions 
with covariances given by \eqref{eq:cov_BM} {
and the coefficients $m_{\pm}$}, $\lambda_\pm$, $\sigma^2_\pm$ are given by 
\medskip
	
\noindent\textbf{Case (I-II)}:  $\sigma^2_+=\tfrac 23$ and $\sigma^2_-=\tfrac29$.
\begin{itemize}
	\item[(i)]  when $\gamma>1/2$, $\lambda_+=\lambda_-=0{=m_-=m_+}$, so that $\mcb Z^{\pm}$ are the unique solutions to the corresponding  Ornstein--Uhlenbeck equations,

	\item[(ii)] when $\gamma=\tfrac 12$, $\lambda_+=0$ {and $m_+=(E_A-E_C)\overline{\mcb Z}^{\,-}$ where $\overline{\mcb Z}^{\,-}$ is random and given by the total mass of the initial condition of $\mcb Z^-$}, while $\lambda_-=E_C-E_A$ {and $m_-=0$}.	
\end{itemize}

\noindent\textbf{Case (III)}: $\sigma^2_\pm=\tfrac29 (1+\frac{c_\mp^2}{(E_A-E_C)^2}-\frac {c_\mp}{(E_A-E_C)})$, for $c_\pm$ defined 
	according to~\eqref{assumption}, and 
\begin{itemize}
	\item[(i)]  when $\gamma>1/2$, $\lambda_+=\lambda_-=0{=m_-=m_+}$,
	\item[(ii)]  when $\gamma=1/2$, $\lambda_+=-\tfrac{E_A-E_C}{3\delta}(c_+-E_B+E_C)$ {and  $m_+=-E_A(1-\tfrac{c_+-E_B}{3\delta})\overline{\mcb Z}^{\,-}$}, while  $\lambda_-=-\tfrac{E_A-E_C}{3\delta}(E_B-E_C-c_-)$ {and  $m_-=-E_A(1-\tfrac{E_B-c_-}{3\delta})\overline{\mcb Z}^{\,+}$, where, as above, $\overline{\mcb Z}^{\mp}$ is the total mass of the initial condition}.
\end{itemize}
\end{thm}

{\begin{remark}\label{rem:Mass}
The additional massive transport term in~\eqref{eq:SBE_main} should not come as a surprise as the total mass of the initial 
condition is a conserved quantity of the system (thus independent of time) and 
{\it does not see} the diverging velocity. To witness, let $t\mapsto\eta_t$ be the 
standard periodic SSEP in diffusive time scale and evolving on $\T_N$ started from the  Bernoulli product measure with parameter $\rho\in(0,1)$ and 
$\mcb Y^N_t (du):=\tfrac{1}{\sqrt{N}}\sum_{x\in\T_N}(\eta_{t}(x)-\rho)\delta_{x/N}(du)$ be its density fluctuation  field. 
Let $f$ be a smooth non-negative non-zero periodic function and $\bar f=\int_\T f(u)\dd u$. 
Notice that for any $t>0$ and $\alpha>1$ we have 
\begin{align*}
\int_0^t \mcb Y^N_s (T_{N^\alpha s} f)\dd s&=\int_0^t \mcb Y^N_s (T_{N^\alpha s} (f-\bar f))\dd s+\int_0^t \mcb Y^N_s (T_{N^\alpha s} \bar f)\dd s\\
&=\int_0^t \mcb Y^N_s (T_{N^\alpha s} (f-\bar f))\dd s+\bar f\int_0^t \mcb Y^N_s (1)\dd s
\end{align*}
where $\mcb Y^N_s (1)$ denotes the field $\mcb Y^N_s$ tested against the function constantly equal to $1$. 
Now, by Theorem~\ref{thm:Cross}, the first term converges to $0$. For the other, $\mcb Y^N_s (1)$ is nothing but the 
total mass of the fluctuation field which is a conserved quantity of the system and therefore independent of time. 
In particular, for every $s$, $\mcb Y^N_s (1)=\overline{\mcb Y^N}$ and the latter converges in law to a 
Gaussian random variable $\overline{\mcb Y}$ 
with variance $\rho(1-\rho)$. Therefore, we have showed that 
\begin{equation*}
\lim_{N\to\infty} \int_0^t \mcb Y^N_s (T_{N^\alpha s} f)\dd s= t \bar f \overline{\mcb Y}
\end{equation*}
in probability. 
In other words, even though we look at the process in the wrong time-frame the integral 
in time of the field does not fully vanish but leaves as a trace its total mass. 
\end{remark}

\begin{remark}
Note that the transport term in~\eqref{eq:SBE_main} only appears for $\gamma=1/2$ 
and we could have avoided its presence by an additional Galilean transformation, 
i.e. by modifying the definition of the velocities in Section~\ref{sec:flucfields}. 
More precisely, one can show that upon setting 
\begin{itemize}[noitemsep]
\item in \textbf{Case (I-II)} $\tilde v_+:= v_++m_+\tfrac{\overline{\mcb Z}^{\,-}}{N}$ and $\tilde v_-:=v_-$, 
\item in \textbf{Case (III)} $\tilde v_{\pm}:= v_{\pm}+m_{\pm}\tfrac{\overline{\mcb Z}^\mp}{N}$
\end{itemize}
the sequence $(\tilde{\mcb Z}^{N,+},\tilde{\mcb Z}^{N,-})_{N\in\mathbb N}$ defined as in Section~\ref{sec:flucfields} 
but with $\tilde v_\pm$ instead of $v_\pm$, converges to the couple 
$(\tilde{\mcb Z}^{+},\tilde{\mcb Z}^{-})$ solving~\eqref{eq:SBE_main} with $m_\pm= 0$ and 
the same choice of the parameters $\lambda_\pm$ and $\sigma_\pm$ as in the above statement.  
Further, the limiting processes $(\tilde{\mcb Z}^{+},\tilde{\mcb Z}^{-})$ can be proven to satisfy 
the cylinder martingale problem of~\cite[Section 5.1]{GPGen}, and are therefore independent. 

That said, the velocities $\tilde v_\pm$ are random and their definition looks rather artificial, at least at first sight. 
Moreover, we believe that the observation that these transport terms appear is interesting in its own right 
so that we preferred to present Theorem~\ref{thm:main_new} as stated. 
\end{remark}}

{\begin{remark}
Note, in particular, from Case {\textbf{(I)}} or {\textbf{(II)}}, that in the completely symmetric cases $E_A=E_B=E_C$, we have that $\lambda_{+}=\lambda_-=0{=m_-=m_+}$ and so, as expected, $\mcb Z^{\pm}$ are solutions of the the Ornstein--Uhlenbeck equation, as in the SSEP.
\end{remark}}

The strategy of proof of the previous theorem consists of first showing tightness of the sequence of fields to be analysed, 
with respect to the Skorokhod topoplogy on $D([0,T], \mcb D'(\mathbb T))$ (see Section \ref{sec:tightness}). 
From this together with {Prokhorov's} theorem, the processes converge along subsequences. 
In order to characterise the limit uniquely, by Theorem~\ref{thm:Uniqueness}, it suffices to show that 
any limit point either satisfies Definition~\ref{def:OU} or points (i)-(iv) of Definition~\ref{def:energy_solution}. 
In this latter case, we follow the  strategy of \cite{GJ14}, which has since been applied to many different models, 
but here we generalise it to the setting in which the system has several conservation laws. 
When instead the limit is a solution to the Ornstein-Uhlenbeck equation, there are two cases to distinguish.  
If $\gamma>1/2$, the asymmetry becomes negligible and the limiting fields evolve independently, 
therefore the proof follows by adapting standard arguments. If instead $\gamma=1/2$, 
the evolution of each field depends non-linearly on the other and therefore 
novel tools are needed in order to show that any crossed term ultimately vanishes (see Section~\ref{sec:cross}).

In the next section, via Dynkin's formula, we derive a collection of martingales for each of the cases {\bf (I)-(III)}. 
These martingales provide a weak formulation for the dynamics of our process and represent 
the building blocks of our arguments.

 \section{Associated martingales }\label{sec:martn}

In order to prove Theorem~\ref{thm:main_new}, we study the microscopic dynamics of the fields 
in each of the cases \textbf{ (I)}, \textbf{(II)} and \textbf{(III)}.
To do so in the greatest possible generality, we start by presenting some  algebraic computations which are the basis 
for the derivation of the martingale terms in point (iii) of Definition \ref{def:energy_solution}.
Recall \eqref{eq:densityfieldNL}. 
We consider now a generic field $\mcb Z^N$ given by a linear combination of 
$\mcb Y^{N,A}_t$ and $\mcb Y^{N,B}_t$, i.e.
\begin{equation}\label{eq_gen_field}
	\mcb Z_t^N (f)=D_1\mcb Y^{N,A}_t(T_{v N^b t}f)+D_2\mcb Y^{N,B}_t (T_{v N^b t}f)\,,
\end{equation} 
where $D_1, D_2,b>0$ and  $v\in\mathbb R$ will be fixed later. 
From Dynkin's formula, see e.g.\cite[Appendix A.1.5]{KL},  we know that for $f\in\mcb D(\mathbb T)$
\begin{equation}\label{eq:Dynkin_gen}
\begin{aligned}
\mcb M_t^N(f)=&{\mcb Z}^{N}_t(f)-{\mcb Z}^{N}_0(f)-\int_0^t (L_N+\partial_s) \mcb Z_s^N(f)\dd s 
\end{aligned}
\end{equation}
is a martingale with respect to the natural filtration of the process. 
It is (tedious but) not hard to see that the  quadratic variation of $\mcb M^{N}_t(f)$ is given by
\begin{equation}\label{eq:QV}
	\begin{aligned}
		\langle\mcb {M}^{N}(f)\rangle_t=  & N^{a-3}\int_0^t \sum_x c_x(\eta)(\nabla_N T_{v N^bs}f(\tfrac xN))^2\big[(D_1 \xi^{A}_{x+1}+D_2 \xi^{B}_{x+1})-(D_1\xi^{A}_{x}+D_2\xi^{B}_{x})\big]^2\dd s.
	\end{aligned}
\end{equation}
In the next lemma, we show that the expectation of the quadratic variation converges to a non-zero finite constant
{\it only if}  the scaling chosen is diffusive, which means that $a$ must necessarily be equal to $2$. 

\begin{lem}\label{lemmaqv}
{For $f\in\mcb D(\mathbb T)$, let $\mcb {M}^{N}(f)$} be the martingale in~\eqref{eq:Dynkin_gen}. Then, under the diffusive scaling $a=2$
\begin{equation}\label{eq:lim_QV}
	\lim_{N\to\infty} \mathbb{E}_{\nu_\rho}[\langle \mcb  M^{N}(f)\rangle_t]=\frac{4}{9}(D_1^2+D_2^2-D_1D_2)t\|\nabla f\|_{2}^2\,.
\end{equation}
\end{lem}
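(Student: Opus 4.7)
The plan is to compute $\mathbb{E}_{\nu_\rho}[\langle \mcb M^N(f)\rangle_t]$ directly from \eqref{eq:QV}, exploit the stationarity and translation invariance of $\nu_\rho$ together with the explicit product form (with $\rho_A=\rho_B=\rho_C=1/3$) to evaluate the per-site expectation, and then identify the remaining sum as a Riemann sum converging to $\|\nabla f\|_2^2$.

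First, applying Fubini (all terms are bounded under $\nu_\rho$) and using translation invariance of $\nu_\rho$, the per-site expectation
\[
\Theta_N:=\mathbb E_{\nu_\rho}\!\left[c_x(\eta)\bigl(D_1(\xi^A_{x+1}-\xi^A_x)+D_2(\xi^B_{x+1}-\xi^B_x)\bigr)^2\right]
\]
is independent of $x$. Since $c^{\alpha\beta}=1+O(N^{-\gamma})$ uniformly in $\alpha,\beta$, we may replace $c_x(\eta)$ by $\mathbbm 1\{\eta(x)\neq\eta(x+1)\}$ up to an $O(N^{-\gamma})$ error. Writing $Z_x:=D_1\xi^A_x+D_2\xi^B_x$ and enumerating the six ordered pairs $(\eta(x),\eta(x+1))$ with $\eta(x)\neq\eta(x+1)$, each having probability $1/9$ under $\nu_\rho$, one computes
\[
(Z_{x+1}-Z_x)^2\in\{(D_1-D_2)^2,\,D_1^2,\,(D_1-D_2)^2,\,D_2^2,\,D_1^2,\,D_2^2\},
\]
so that
\[
\Theta_N=\tfrac{1}{9}\bigl[2(D_1-D_2)^2+2D_1^2+2D_2^2\bigr]+O(N^{-\gamma})=\tfrac{4}{9}(D_1^2+D_2^2-D_1D_2)+O(N^{-\gamma}).
\]

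Inserting this into \eqref{eq:QV} with $a=2$, we obtain
\[
\mathbb E_{\nu_\rho}[\langle\mcb M^N(f)\rangle_t]=\int_0^t\Bigl[\tfrac{4}{9}(D_1^2+D_2^2-D_1D_2)+O(N^{-\gamma})\Bigr]\cdot\frac{1}{N}\sum_{x\in\mathbb T_N}\bigl(\nabla_N T_{vN^bs}f(\tfrac{x}{N})\bigr)^2\,\dd s.
\]
It remains to show that $\tfrac1N\sum_x(\nabla_N T_{vN^bs}f(x/N))^2\to\|\nabla f\|_2^2$ uniformly in $s\in[0,t]$. Since $f\in\mcb D(\mathbb T)$ is smooth and periodic, $|\nabla_N g(x/N)-g'(x/N)|\leq \tfrac{1}{2N}\|g''\|_\infty$ for any smooth periodic $g$; applying this to $g=T_{vN^bs}f$ (which has the same $C^2$ norm as $f$ by periodicity of $\mathbb T$) yields an error bound uniform in $s$. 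Moreover, because the sum ranges over the full period and $(g')^2$ is periodic, the Riemann sum satisfies $\tfrac1N\sum_x (g'(x/N))^2=\int_\mathbb T (f'(u))^2\,\dd u+O(1/N)$ uniformly in the shift $vN^bs$. Combining these estimates and using dominated convergence in $s$ gives \eqref{eq:lim_QV}.

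No step poses a real obstacle; the only point requiring a little care is the uniform (in $s$) convergence of the Riemann sum in the presence of the fast-moving frame $T_{vN^bs}$, which is handled as above by using periodicity of the torus so that the translation does not increase derivatives. The choice $a=2$ is forced: for $a<2$ the expression vanishes, while for $a>2$ it diverges, confirming the diffusive scaling.
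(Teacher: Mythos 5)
Your proof is correct and follows the same high-level structure as the paper's: compute the per-site expectation under $\nu_\rho$, pull it out of the spatial sum by stationarity, and recognize the remaining Riemann sum. The one noteworthy difference lies in how you evaluate the per-site expectation $\Theta_N$: you enumerate the six off-diagonal ordered pairs directly and replace $c_x(\eta)$ by $\mathbbm 1\{\eta(x)\neq\eta(x+1)\}$ at the cost of an $O(N^{-\gamma})$ error, while the paper expands the square algebraically and uses the exclusion rule together with the exact identity $c^{\alpha\beta}+c^{\beta\alpha}=2$ to show the expectation equals $\frac{4}{9}(D_1^2+D_2^2-D_1D_2)$ \emph{exactly}, with no $N$-dependent remainder. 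Both routes suffice for the limit statement; yours is a bit more elementary, while the paper's reveals that the constant is already exact at finite $N$ (a cleaner fact, though not needed here). Your careful treatment of the uniform-in-$s$ Riemann-sum convergence in the presence of the moving frame $T_{vN^b s}$ — using periodicity so the translated lattice still tiles a full period with spacing $1/N$ — is sound and fills in a step the paper compresses to ``a simple computation.''
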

\begin{proof} Note first that for any $x\in \mathbb T_N$,
	\begin{equation}\label{calculation_quadratic_variation}
		\mathbb{E}_{\nu_\rho}\big[c_x(\eta)[(D_1\xi^{A}_{x+1}+D_2\xi^{B}_{x+1})-(D_1\xi^{A}_{x}+D_2\xi^{B}_{x})]^2\big]=\frac{4}{9}(D_1^2+D_2^2-D_1D_2)\,.
	\end{equation} 	
	To prove~\eqref{calculation_quadratic_variation},
recall the definition of $c_x(\eta)$ in \eqref{cxeta}, and note that for $\alpha, \beta\in\{A,B,C\}$ such that $\alpha\neq\beta$,  it holds {$c^{\alpha,\beta}+c^{\beta,\alpha}=2.$} The left-hand side of \eqref{calculation_quadratic_variation} can be written as
	\begin{equation}\label{calc_qv2}
		\begin{aligned}
			D_1^2\,\E_{\nu_\rho}\Big[\sum_{\alpha,\beta}c^{\alpha,\beta}\xi^{\alpha}_{x}\xi^{\beta}_{x+1}\xi^A_{x+1}&+\sum_{\alpha,\beta}c^{\alpha,\beta}\xi^{\alpha}_{x}\xi^{\beta}_{x+1}\xi^A_{x}\Big]+D_2^2\,\E_{\nu_\rho}\Big[\sum_{\alpha,\beta}c^{\alpha,\beta}\xi^{\alpha}_{x}\xi^{\beta}_{x+1}\xi^B_{x+1}+\sum_{\alpha,\beta}c^{\alpha,\beta}\xi^{\alpha}_{x}\xi^{\beta}_{x+1}\xi^B_{x}\Big]\\
			&+2D_1D_2\E_{\nu_\rho}\Big[\sum_{\alpha,\beta}c^{\alpha,\beta}\xi^{\alpha}_{x}\xi^{\beta}_{x+1}\xi^A_{x+1}\xi^B_{x+1}+\sum_{\alpha,\beta}c^{\alpha,\beta}\xi^{\alpha}_{x}\xi^{\beta}_{x+1}\xi^A_{x}\xi^B_{x}\Big]\\
			&-2D_1D_2\E_{\nu_\rho}\Big[\sum_{\alpha,\beta}c^{\alpha,\beta}\xi^{\alpha}_{x}\xi^{\beta}_{x+1}\xi^A_{x+1}\xi^B_{x}+\sum_{\alpha,\beta}c^{\alpha,\beta}\xi^{\alpha}_{x}\xi^{\beta}_{x+1}\xi^A_{x}\xi^B_{x+1}\Big]\\
			&-2\E_{\nu_\rho}\Big[D_1^2\sum_{\alpha,\beta}c^{\alpha,\beta}\xi^{\alpha}_{x}\xi^{\beta}_{x+1}\xi^A_{x+1}\xi^A_{x}+D_2^2\sum_{\alpha,\beta}c^{\alpha,\beta}\xi^{\alpha}_{x}\xi^{\beta}_{x+1}\xi^B_{x}\xi^B_{x+1}\Big].
		\end{aligned}
	\end{equation}
	By the exclusion rule $\xi^A_{y}\xi^B_y=0$, so that the second line of \eqref{calc_qv2} is equal to zero.
	The last line of \eqref{calc_qv2} is also equal to zero because $c^{A,A}=c^{B,B}=0$, since there are no exchanges between particles of the same type.
	The third line of \eqref{calc_qv2} is equal to
	\begin{equation*}
		-2D_1D_2 \left(c^{B,A}\E_{\nu_\rho}\Big[\xi^A_{x+1}\xi^B_x\Big]+c^{A,B}\E_{\nu_\rho}\Big[\xi^B_{x+1}\xi^A_x\Big]\right)=-\frac{2}{9}D_1D_2[c^{B,A}+c^{A,B}]=-\frac{4}{9}D_1D_2\,,
	\end{equation*}
	while the first is given by
	\begin{equation*}
		\begin{aligned}
			D_1^2 \,\E_{\nu_\rho}\Big[\sum_{\alpha}c^{\alpha , A} \xi^\alpha_x\xi^A_{x+1}+&\sum_{\beta}c^{ A, \beta} \xi^\beta_{x+1}\xi^A_{x}\Big]+D_2^2\,\E_{\nu_\rho}\Big[\sum_{\alpha}c^{\alpha, B} \xi^\alpha_x\xi^B_{x+1}+\sum_{\beta}c^{ B, \beta} \xi^\beta_{x+1}\xi^B_{x}\Big]\\
			&=\frac{1}{9}D_1^2\sum_{\alpha\in\{B,C\} }(c^{\alpha, A}+c^{A, \alpha})+\frac{1}{9}D_2^2\sum_{\alpha\in\{A,C\}}(c^{\alpha, B}+c^{B, \alpha})={\frac{4}{9}}(D_1^2+D_2^2)\,,
		\end{aligned}
	\end{equation*}
	from which~\eqref{calculation_quadratic_variation} follows. 
	
Using \eqref{eq:QV} and \eqref{calculation_quadratic_variation} a simple computation proves the statement.
\end{proof}

We turn our attention to the right-hand side of~\eqref{eq:Dynkin_gen} and 
we compute explicitly the expression inside the time integral. 
Observe that
\begin{equation}\label{eq:act_der}
\partial_s\mcb Z_s^N(f)=v N^{b-1} \mcb Z_s^N(\nabla f)\,.
\end{equation}
Moreover,  to determine $L_N\mcb Z_s^N(f)$ we need to understand the action of the generator 
on each type of particle, i.e. $L_N \xi^A_x$ and 
$L_N \xi^{B}_x$. It is immediate to see that
$L_N \xi^A_x=N^a(j_{x-1,x}^A-j_{x,x+1}^{A})$, where, 
for particles of species $A$, the infinitesimal current is given by
\begin{equation}\label{eq:inst_current}
	\begin{aligned}
		j^A_{x,x+1}=&\xi^{A}_{x}-\xi^{A}_{x+1}+\frac{E_{A}-E_{B}}{2N^\gamma}(\xi^{A}_{x}\xi^{B}_{x+1}+\xi^{B}_{x}\xi^{A}_{x+1})\\
		&+\frac{E_{A}-E_{C}}{2N^\gamma}(\xi^{A}_{x+1}+\xi^{A}_{x}-2\xi^{A}_{x}\xi^{A}_{x+1}-\xi^{A}_{x}\xi^{B}_{x+1}-\xi^{B}_{x}\xi^{A}_{x+1})\,,
	\end{aligned}
\end{equation}
while $L_N \xi^{B}_x=j_{x-1,x}^{B}-j_{x,x+1}^{B}$
and  the infinitesimal current for particles of species $B$ is given by the same expression with $A$ and $B$ interchanged.

We now look at the infinitesimal current $\bar{j}^\alpha$, $\alpha\in\{A,B\}$ for the centred variables 
$\bar{\xi}^\alpha_x:=\xi^\alpha_x-\E_{\nu_\rho}[\xi^\alpha_x]=\xi^\alpha_x-1/3$, $x\in\T_N$, which is 
\begin{equation}\label{eq:inst_current_A_n}
	\begin{aligned}
		\bar{j}^A_{x,x+1}=&\bar{\xi}^A_{x}-\bar{\xi}^A_{x+1}-\frac{E_{B}-E_{A}}{6N^\gamma}(\bar{\xi}^{A}_{x}+\bar{\xi}^{A}_{x+1})-\frac{E_{B}-E_{C}}{6N^\gamma}(\bar{\xi}^{B}_{x}+\bar{\xi}^{B}_{x+1})
		\\
		&+\frac{E_{C}-E_{A}}{N^\gamma}\bar{\xi}^A_x\bar{\xi}^A_{x+1}-\frac{E_{B}-E_{C}}{2N^\gamma}( \bar{\xi}^{A}_{x}\bar{\xi}^{B}_{x+1}+\bar{\xi}^{B}_{x}\bar{\xi}^{A}_{x+1})
	\end{aligned}
\end{equation}
and 
\begin{equation}\label{eq:inst_current_B_n}
\begin{aligned}
\bar{j}^{B}_{x,x+1}=&\bar{\xi}^{B}_{x}-\bar{\xi}^{B}_{x+1}+\frac{E_{B}-E_{A}}{6N^\gamma}(\bar{\xi}^{B}_{x}+\bar{\xi}^{B}_{x+1})-\frac{E_{A}-E_{C}}{6N^\gamma}(\bar{\xi}^{A}_{x}+\bar{\xi}^{A}_{x+1})\\
&+\frac{E_{C}-E_{B}}{N^\gamma}\bar{\xi}^{B}_x\bar{\xi}^{B}_{x+1}-\frac{E_{A}-E_{C}}{2N^\gamma}( \bar{\xi}^{A}_{x}\bar{\xi}^{B}_{x+1}+\bar{\xi}^{B}_{x}\bar{\xi}^{A}_{x+1})\,.
\end{aligned}
\end{equation}

Thanks to the previous, a simple, but long computation,  shows that  for $f\in \mcb D(\mathbb T)$,
\begin{align}\label{dynkin_case_gen_n}
		&\mcb M_t^{N}(f)={\mcb Z}^{N}_t(f)-{\mcb Z}^{N}_0(f)-N^{a-2}\int_0^t \dd s \mcb Z_s^{N}(\Delta_Nf)\nonumber\\
		&-\frac{N^{a-3/2}}{N^\gamma}\int_0^t \dd s \sum_{x\in\mathbb{T}_N}\nabla_N T_{v N^bs}f\left(\tfrac{x}{N}\right)\Big(D_1(E_{C}-E_A)\bar\xi^A_{x}(s)\bar\xi^A_{x+1}(s)+D_2(E_{C}-E_{B})\bar\xi^{B}_{x}(s)\bar\xi^{B}_{x+1}(s)\Big)\nonumber\\
		&+\frac{N^{a-3/2}}{2N^\gamma}\Big(D_1(E_{B}-E_{C})+D_2(E_{A}-E_{C})\Big)\int_0^t \dd s \sum_{x\in\mathbb{T}_N}\nabla_N T_{v N^bs}f\left(\tfrac{x}{N}\right)\Big(\bar{\xi}^{A}_{x}(s)\bar{\xi}^{B}_{x+1}(s)+\bar{\xi}^{B}_{x}(s)\bar{\xi}^{A}_{x+1}(s)\Big)\nonumber\\
		&+D_1\frac{N^{a-3/2}}{N^\gamma}\int_0^t \dd s \sum_{x\in\mathbb{T}_N}\nabla_N T_{v N^bs}f\left(\tfrac{x}{N}\right)\Big(\frac{E_{B}-E_A}{3}\bar\xi_x^A(s)+\frac{E_{B}-E_{C}}{3}\bar\xi^{B}_{x}(s)\Big)		\\
		&+D_2\frac{N^{a-3/2}}{N^\gamma}\int_0^t \dd s \sum_{x\in\mathbb{T}_N}\nabla_N T_{v N^bs}f\left(\tfrac{x}{N}\right)\Big(\frac{E_{A}-E_{C}}{3}\bar\xi^{A}_{x}(s)-\frac{E_{B}-E_{A}}{3}\bar\xi^{B}_{x}(s)\Big)\nonumber\\
		&-D_1\frac{N^{a-5/2}}{N^\gamma}\int_0^t \dd s \sum_{x\in\mathbb{T}_N}\Delta_N T_{v N^bs}f\left(\tfrac{x}{N}\right)\Big(\frac{E_{B}-E_A}{6}\bar\xi_x^A(s)+\frac{E_{B}-E_{C}}{6}\bar\xi^{B}_{x}(s)\Big)\nonumber\\
		&-D_2\frac{N^{a-5/2}}{N^\gamma}\int_0^t \dd s \sum_{x\in\mathbb{T}_N}\Delta_N T_{v N^bs}f\left(\tfrac{x}{N}\right)\Big(\frac{E_A-E_{C}}{6}\bar\xi^A_{x}(s)-\frac{E_{B}-E_A}{6}\bar\xi^{B}_{x}(s)\Big)\nonumber\\
		&+\frac{v N^{b-1}}{\sqrt N}\int_0^t \dd s\sum_{x\in\mathbb T_N} \nabla T_{v N^bs}f\left(\tfrac{x}{N}\right)\Big(D_1\bar\xi^A_{x}(s)+D_2\bar\xi^{B}_{x}(s)\Big)\,.\nonumber
	\end{align}
Above, the discrete Laplacian and the discrete derivative operators are defined, for $x\in\mathbb  T_N$, by
	\begin{equation}\label{eq:discretederivatives}
	\begin{aligned}
	\Delta_N f\left(\frac{x}{N}\right) &=N^2\left\{f\left(\frac{x+1}{N}\right)-2f\left(\frac{x}{N}\right)+f\left(\frac{x-1}{N}\right)\right\}\\
	\nabla_N f\left(\frac{x}{N}\right) &=N\left\{f\left(\frac{x+1}{N}\right)-f\left(\frac{x}{N}\right)\right\}.
	\end{aligned}
	\end{equation}

\subsection{Finding the fluctuation fields}

In view of Lemma~\ref{lemmaqv}, let us fix the diffusive scaling $a=2$. 
First we observe that, independently of the values of $D_1,D_2,v$ and $b$, 
the variance of the terms in the sixth and seventh lines {in \eqref{dynkin_case_gen_n}} are of order $O(N^{-2\gamma})$, 
so that they can be neglected for any $\gamma>0$. 
On the other hand, one sees that the variance of the terms in the fourth and fifth lines of \eqref{dynkin_case_gen_n} are of order $O(N^{2-2\gamma})$, 
which explodes for any $\gamma\in(0,1)$, while that on the eighth is of order $O(N^{2b-3})$. 
Therefore, we are forced to choose the velocity $v$, the scaling $b$ and the constants $D_1$ and $D_2$, 
in order to annihilate them. 
Note that, {after a replacement of discrete derivatives by continuous ones, which can be done by paying a price of a lower order with respect to $N$} their sum is
\begin{equation}\label{dynkin_gen}
	\begin{aligned}
	&\Big\{N^{1/2-\gamma}\Big(D_1\frac{E_{B}-E_A}{3}+ D_2\frac{E_A-E_{C}}{3}\Big)+N^{b-3/2} D_1 v\Big\}
		\int_0^t \dd s \sum_{x\in\mathbb{T}_N}\nabla T_{v N^bs}f\left(\frac{x}{N}\right)\bar\xi^{A}_{x}(s)\\
		&+\Big\{N^{1/2-\gamma}\Big(D_1\frac{E_{B}-E_{C}}{3}- D_2\frac{E_{B}-E_A}{3}\Big)+N^{b-3/2} D_2 v\Big\}
		\int_0^t \dd s \sum_{x\in\mathbb{T}_N}\nabla T_{v N^bs}f\left(\frac{x}{N}\right)\bar\xi^{B}_{x}(s).\end{aligned}
\end{equation} 
For the coefficients of the integrals to be equal to zero, we first want to pick $b$ in such a way that the summands are of the same order, which imposes $b=a=2$, 
and then need to find constants $D_1, D_2$ and $v$ satisfying the system of equations 
\begin{equation}\label{eq:system}
\begin{cases}
	\frac{1}{N^\gamma}\Big(D_1\frac{E_{B}-E_A}{3}+ D_2\frac{E_A-E_{C}}{3}\Big)+D_1v=0 \\
	\frac{1}{N^\gamma}\Big(D_1\frac{E_{B}-E_{C}}{3}- D_2\frac{E_{B}-E_A}{3}\Big)+D_2v=0.
\end{cases}
\end{equation}
As the system is overdetermined, 
we fix $D_1=1$. Then, we obtain two solutions, which, in the notation of Section~\ref{sec:flucfields}, are given by
\begin{itemize}
\item[a)] $D_2=D_2^+=\frac{c_-}{E_A-E_C}$ for $c_-$ given in~\eqref{assumption} with $v_+=\frac{\delta}{2N^\gamma}$ 
and $\delta$ defined according to~\eqref{e:delta}. 
\item   [b)] $D_2=D_2^-=\frac{c_+}{E_A-E_C}$ for $c_+$ given in~\eqref{assumption} with $v_-=-\frac{\delta}{2N^\gamma}$ and $\delta$ as above.
\end{itemize}
We are therefore led to consider the fields 
 \begin{equation}\begin{split}\label{eq:gen_fields_n}
		&{\mcb {Z}}^{N,+}_t(f)=\mcb Y^{N,A}_t(T_{v_+ N^2 t}f)+\tfrac{c_-}{E_A-E_C}\mcb Y^{N,B}_t(T_{v_+N^2t}f)\,,\\
&\mcb {Z}^{N,-}_t(f)=\mcb Y^{N,A}_t(T_{v_-N^2t}f)+\tfrac{c_+}{E_A-E_C}\mcb Y^{N,B}_t(T_{v_-N^2t}f)\,,\end{split}
\end{equation}
whose {corresponding} martingales can be written as 
\begin{equation}\label{dynkin_case_gen_nn}
	\begin{aligned}
		\mcb M_t^{N,\pm}(f)=&{\mcb Z}^{N,\pm}_t(f)-{\mcb Z}^{N,\pm}_0(f)-\mcb I_t^{N,\pm}(f)- \mcb B_t^{N,\pm}(f){+\mcb R_t^{N,\pm}(f)}, 
	\end{aligned}
\end{equation}
where $$\mcb I_t^{N,\pm}(f):=\int_0^t \dd s \mcb Z_s^{N,\pm}(\Delta_Nf),$$
\begin{equation}\label{eq:quadratic_term_gen}
	\begin{aligned}
	\mcb B_t^{N,\pm}(f):=&N^{\tfrac12-\gamma}(E_{C}-E_A)\int_0^t \dd s \sum_{x\in\mathbb{T}_N}\nabla_N T_{v_\pm N^2s}f\left(\tfrac{x}{N}\right)\bar\xi^A_{x}(s)\bar\xi^A_{x+1}(s)\\
	&-N^{\tfrac12-\gamma}c_\mp\frac{E_{C}-E_{B}}{E_A-E_C}\int_0^t \dd s \sum_{x\in\mathbb{T}_N}\nabla_N T_{v_\pm N^2 s}f\left(\tfrac{x}{N}\right)\bar\xi^{B}_{x}(s)\bar\xi^{B}_{x+1}(s)\\
	&+N^{\tfrac12-\gamma}\frac{E_B-E_C+c_\mp}{2}\int_0^t \dd s \sum_{x\in\mathbb{T}_N}\nabla_N T_{v_\pm N^2s}f\left(\tfrac{x}{N}\right)\Big(\bar{\xi}^{A}_{x}(s)\bar{\xi}^{B}_{x+1}(s)+\bar{\xi}^{B}_{x}(s)\bar{\xi}^{A}_{x+1}(s)\Big)\,,
	\end{aligned}
\end{equation}
{and $\mcb R_t^{N,\pm}(f)$ contains the terms that vanish in $L^2(\mathbb P_{\nu_\rho})$ as $N\to+\infty$.}
We now investigate the different cases that arise by varying the 
relation among the constants $E_\alpha$ for {$\alpha\in\{A,B,C\}$}.

\subsection{Case (I): $E_{A}-E_{C}=E_{B}-E_{C}=E$}\label{sec:case1}

Under the hypothesis on the rates, the system in~\eqref{eq:system} with $D_1=1$, 
is solved by the following values of the parameters 
\begin{itemize}
\item[a)] $D_2=D_2^+=-1$, $v=v_+=\frac{E}{3N^\gamma}$
\item[b)] $D_2=D_2^-=1$, $v=v_-=-\frac{E}{3N^\gamma}$
\end{itemize}
which give the fields in~\eqref{e:case1}.
{From~\eqref{dynkin_case_gen_nn}, 
the martingales read}
\begin{equation}\label{dynkin_case_gen_nnn}
	\begin{aligned}
		\mcb M_t^{N,\pm}(f)=&{\mcb Z}^{N,\pm}_t(f)-{\mcb Z}^{N,\pm}_0(f)-\mcb I_t^{N,\pm}(f)- \mcb B_t^{N,\pm}(f)+\mcb R_t^{N,\pm}(f), 
	\end{aligned}
\end{equation}
where $$\mcb I_t^{N,\pm}(f):=\int_0^t \dd s \mcb Z_s^{N,\pm}(\Delta_Nf),$$
and, by~\eqref{eq:quadratic_term_gen}
 \begin{equation}\label{eq:quad_case1}
	\begin{aligned}
	\mcb B_t^{N,\pm}(f):=&-N^{\tfrac12-\gamma}E\int_0^t \dd s \sum_{x\in\mathbb{T}_N}\nabla_N T_{v_\pm N^2s}f\left(\tfrac{x}{N}\right)\bar\xi^A_{x}(s)\bar\xi^A_{x+1}(s)\\
	&\mp N^{\tfrac12-\gamma}E\int_0^t \dd s \sum_{x\in\mathbb{T}_N}\nabla_N T_{v_\pm N^2 s}f\left(\tfrac{x}{N}\right)\bar\xi^{B}_{x}(s)\bar\xi^{B}_{x+1}(s)\\
	&+N^{\tfrac12-\gamma}\frac{E\mp E}{2}\int_0^t \dd s \sum_{x\in\mathbb{T}_N}\nabla_N T_{v_\pm N^2s}f\left(\tfrac{x}{N}\right)\Big(\bar{\xi}^{A}_{x}(s)\bar{\xi}^{B}_{x+1}(s)+\bar{\xi}^{B}_{x}(s)\bar{\xi}^{A}_{x+1}(s)\Big)\,,
	\end{aligned}
\end{equation}
and $\mcb R_t^{N,\pm}(f)$ is a term whose {$L^2(\mathbb P_{\nu_\rho})$}-norm vanishes as $N\to\infty$.

Observe that this choice of the fluctuation fields matches that of  Appendix \ref{sub:special_case}. 
Moreover the  prediction is that, in the strong asymmetric regime (i.e. for $\gamma=0$), 
the first field should have diffusive behaviour while the second  {should have} KPZ behaviour.  
To see this (at least in the weakly asymmetric case), it remains to analyse the term $\mcb B_t^{N,\pm}(f)$. 

\subsubsection{The  field $\mcb Z_t^{N,-} $}

From~\eqref{eq:quadratic_term_gen}, we have
\begin{equation}\label{eq:DynkinZ1_diff}
\begin{aligned}
\mcb B_t^{N,-}(f)&=-N^{\tfrac12-\gamma}E\int_0^t \dd s \sum_{x\in\mathbb{T}_N}\nabla_N T_{-\frac{E}{3}N^{2-\gamma}s}f\left(\tfrac{x}{N}\right)\Big\{\bar\xi^A_{x}(s)\bar\xi^A_{x+1}(s)+\bar\xi^{B}_{x}(s)\bar\xi^{B}_{x+1}(s)\\&\quad\quad\quad\quad\quad\quad\quad \quad \quad\quad\quad\quad\quad\quad\quad\quad\quad+\bar\xi^{A}_{x}(s)\bar\xi^{B}_{x+1}(s)+\bar\xi^{B}_{x}(s)\bar\xi^{A}_{x+1}(s)\Big\}.
\end{aligned}
\end{equation}
From the second-order Boltzmann-Gibbs Principle, namely Theorem~\ref{thm:BG}, for $\gamma>1/2$ the  term  $\mcb B_t^{N,-}(f)$  vanishes in $L^2(\mathbb P_{\nu_{\rho}})$ as $N\to\infty$. For $\gamma=1/2$ instead, 
it converges to a non-trivial limit.  
To see what this limit is, note that, for $x\in\Z$, $\alpha, \beta\in\{A,B\}$ and $\varepsilon>0$,  
Theorem~\ref{thm:BG} allows us to replace $\bar\xi^{\alpha}_x$ and $\bar\xi^{\beta}_{x+1}$ with 
$\overleftarrow\xi^{\varepsilon N,\alpha}_{x}$ and $\overrightarrow\xi^{\varepsilon N,\beta}_{x}$ respectively, 
which are the centred averages of $\bar\xi^{\alpha}_x$ and $\bar\xi^{\beta}_{x+1}$
on a box of size $\varepsilon N$ to the left and to the right of $x$, see~\eqref{e:localave}. 
Hence, modulo terms whose $L^2(\mathbb P_{\nu_{\rho}})$-norm vanish as $N\to\infty$, $\mcb B^{N,-}(f)$ becomes
\begin{equation}\label{eq:DynkinZ1_diff_2}
\begin{aligned}
\mcb B_t^{N,-}(f)&=-E\int_0^t \dd s \sum_{x\in\mathbb{T}_N}\nabla_N T_{-\frac{E}{3}N^{3/2}s}f\left(\tfrac{x}{N}\right)\Big(	\ola{\xi}^{A,\varepsilon N}_x(s)\ora{\xi}^{A,\varepsilon N}_{x}(s)+\ola{\xi}^{B,\varepsilon N}_x(s)\ora{\xi}^{B,\varepsilon N}_{x}(s)\\&\quad \quad \quad \quad \quad \quad \quad  \quad\quad \quad\quad \quad \quad \quad\quad \quad+\ola{\xi}^{A,\varepsilon N}_x(s)\ora{\xi}^{B,\varepsilon N}_{x}(s)+\ola{\xi}^{B,\varepsilon N}_x(s)\ora{\xi}^{A,\varepsilon N}_{x}(s)\Big)\\
&=-E\int_0^t \dd s \sum_{x\in\mathbb{T}_N}\nabla_N T_{-\frac{E}{3}N^{3/2}s}f\left(\tfrac{x}{N}\right)\left(	\ola{\xi}^{A,\varepsilon N}_x(s)+\ola{\xi}^{B,\varepsilon N}_x(s)\right)\left(\ora{\xi}^{B,\varepsilon N}_{x}(s)+\ora{\xi}^{A,\varepsilon N}_{x}(s)\right).
\end{aligned}
\end{equation}
Let us now define for $u,v\in\mathbb T$ the functions 
\begin{equation*}
\ora{i_\varepsilon}(u)(v)=\frac{1}{\varepsilon}\Id_{(u,u+\varepsilon]}(v),\quad\ola{i_\varepsilon}(u)(v)=\frac{1}{\varepsilon}\Id_{[u-\varepsilon,u)}(v)\,.
\end{equation*}
Note that for any $d\in\mathbb R$, we have that  $T_d
\ora{i_\varepsilon}(u-d)(v)=\ora{i_\varepsilon}(u)(v)$ and $T_d\ola{i_\varepsilon}(u-d)(v)=\ola{i_\varepsilon}(u)(v)$.

Since $\mcb{Z}^{N,-}_s $ has a velocity $v_-$ in its definition, we get that 
\begin{equation}\label{eq:closing_fields}
\begin{aligned}
\mcb{Z}^{N,-}_s\Big(\ora{i_\varepsilon}\Big(\tfrac{x+\frac E3 N^{3/2}s}{N}\Big)\Big)&=\frac{1}{\sqrt N}\sum_{y\in\mathbb T_N} T_{-\frac E3N^{3/2}s}\ora{i_\varepsilon}\Big(\tfrac{x+\frac E3 N^{3/2}s}{N}\Big)\Big(\tfrac yN\Big)\Big(\bar\xi_x^A(s)+\bar\xi_x^{B}(s)\Big)\\
&=\frac{1}{\sqrt N}\sum_{y\in\mathbb T_N} \ora{i_\varepsilon}\Big(\tfrac{x+\frac E3 N^{3/2}s}{N}\Big)\Big(\tfrac {y+\frac E3 N^{3/2}s}{N}\Big)\Big(\bar\xi_x^A(s)+\bar\xi_x^{B}(s)\Big)\\&=\frac{1}{\sqrt N}\sum_{y}\ora{i_\varepsilon}\Big(\tfrac{x}{N}\Big)\Big(\tfrac yN\Big)\Big(\bar\xi_x^A(s)+\bar\xi_x^{B}(s)\Big)\\&=\sqrt N\Big(\ora{\xi}^{A,\varepsilon N}_x(s)+\ora{\xi}^{B,\varepsilon N}_{x}(s)\Big)\,.
\end{aligned}
\end{equation}
The same identity above holds replacing $\ora{i_\varepsilon}$ by $\ola{i_\varepsilon}$
and $\ora{\xi}^{\alpha,\varepsilon N}_\cdot$ by $\ola{\xi}^{\alpha,\varepsilon N}_\cdot$, $\alpha\in\{A,B\}$.

From the last identities we rewrite \eqref{eq:DynkinZ1_diff_2} as
\begin{equation}\label{eq:martingaleZ3}
\begin{aligned}
\mcb  B_t^{N,-}(f)=-E\int_0^t \dd s{\frac 1N} \sum_{x\in\mathbb{T}_N}\nabla_NT_{-\frac E3 N^{3/2}s} f\left(\tfrac{x}{N}\right)\mcb{Z}^{N,-}_s\Big(\ola{i_\varepsilon}\Big(\tfrac{x+\frac E3 N^{3/2}s}{N}\Big)\Big)\mcb{Z}^{N,-}_s\Big(\ora{i_\varepsilon}\Big(\tfrac{x+\frac E3 N^{3/2}s}{N}\Big)\Big). 
\end{aligned}
\end{equation}
In conclusion, by the change of variables $z=x+\frac E3 N^{3/2}s$, we write $\mcb M^{N,-}(f)$ as
\begin{equation}\label{eq:martingaleZ4}
\begin{aligned}
\mcb M_t^{N,-}(f)=&{\mcb Z}^{N,-}_t(f)-{\mcb Z}^{N,-}_0(f)-\int_0^t \dd s \mcb Z_s^{N,-}(\Delta_N  f){+\mcb R_t^{N,\pm}(f)} \\
&-E\int_0^t \dd s\frac 1N \sum_{z\in\mathbb{T}_N}\nabla_N f\left(\tfrac{z}{N}\right)\mcb{Z}^{N,-}_s(\ola{i_\varepsilon}(\tfrac zN))\mcb{Z}^{N,-}_s(\ora{i_\varepsilon}(\tfrac zN))\,.
\end{aligned}
\end{equation}
This last expression suggests that, in the limit $N\to\infty$ and $\varepsilon\to0$, the martingale $\mcb M^{N,-}(f)$ 
converges to that in \eqref{eq:mart_prob} with $\lambda=E$. Hence, ${\mcb Z}^{N,-}$ displays KPZ behaviour {when $\gamma=1/2$}.

\subsubsection{The  field $\mcb Z_t^{N,+} $}\label{sec:case1_2}

{From~\eqref{eq:quadratic_term_gen}, we have
\begin{equation}\label{eq:quad_case1_2}
\begin{aligned}
\mcb B_t^{N,+}(f)&=-E N^{\frac12-\gamma}\int_0^t \dd s \sum_{x\in\mathbb{T}_N}\nabla_N T_{ \frac E 3 N^{3/2}s}f\left(\tfrac{x}{N}\right)\Big(\bar\xi^{A}_{x}(s)\bar\xi^{A}_{x+1}(s)-\bar\xi^{B}_{x}(s)\bar\xi^{B}_{x+1}(s)\Big)\\
&=-\frac{E}2 N^{\frac12-\gamma}\int_0^t \dd s \sum_{x\in\mathbb{T}_N}\nabla_N T_{ \frac E 3 N^{3/2}s}f\left(\tfrac{x}{N}\right)\Big\{\Big(\bar\xi^{A}_{x}(s)+\bar\xi^{B}_{x}(s)\Big)\Big(\bar\xi^{A}_{x+1}(s)-\bar\xi^{B}_{x+1}(s)\Big)\\
&\qquad\qquad\qquad\qquad\qquad\qquad\qquad\qquad\qquad +\Big(\bar\xi^{A}_{x}(s)-\bar\xi^{B}_{x}(s)\Big)\Big(\bar\xi^{A}_{x+1}(s)+\bar\xi^{B}_{x+1}(s)\Big)\Big\}\,.
\end{aligned}
\end{equation}
Once again, for $\gamma>1/2$, {$\mcb B_t^{N,+}(f)$} vanishes in view of the second-order Boltzmann-Gibbs principle {and for $\gamma>1/2$ this field has diffusive behaviour.}

For $\gamma=1/2$, we will need instead the second version of the principle, Theorem~\ref{thm:BG_v2}. 
To see how to apply it, 
we introduce the 
centred empirical measure $\bar\pi^{N,\alpha}_s$, $\alpha\in\{A,B,C\}$, which is defined as 
\begin{equation}\label{eq:emp_BG}
\bar\pi^{N,\alpha}_s(du)=\frac{1}{N}\sum_{x\in\mathbb{T}_N}[\xi^\alpha_x(s)-\rho_\alpha]\delta_{\frac xN}(du).
\end{equation}
Observe that this is nothing but the density field $\mcb Y^{N,\alpha}$ but in a different scaling regime. 

With this definition at hand, we have 
\begin{equation}\label{eq:rel_em_mea_ave}
\begin{aligned}
\langle\bar\pi^{N,\alpha}_s,\ola{\rho_\ell}(\cdot-\tfrac xN)\rangle=\frac{1}{{N}\ell}\sum_{y}\ola\rho\Big(\frac{y-x}{N}\Big)\bar\xi^\alpha_y(s),\quad \textrm{and}\quad
\langle\bar\pi^{N,\alpha}_s,\ora{\rho_\ell}(\cdot-\tfrac xN)\rangle=\frac{1}{{N}\ell}\sum_{y
}\ora\rho\Big(\frac{y-x}{N}\Big)\bar\xi^\alpha_y(s)
\end{aligned}
\end{equation}
where $\ola{\rho_\ell}$ and $\ora{\rho_\ell}$ are defined according to~\eqref{e:rhol} (see also~\eqref{eq:smooth}). 

For $x\in\Z$, $\alpha, \beta\in\{A,B\}$ and $\varepsilon>0$, 
Theorem~\ref{thm:BG_v2} allows us to replace $\bar\xi^{\alpha}_{x}\bar\xi^{\beta}_{x+1}$
by 
\begin{equation*}
\langle\bar\pi^{N,\alpha}_s,\ola{\rho_\varepsilon}(\cdot-\tfrac xN)\rangle  \langle\bar\pi^{N,\beta}_s,\ora{\rho_\varepsilon}(\cdot-\tfrac xN)\rangle\,,
\end{equation*} 
at a price whose second moment is $O(\varepsilon+N^{-1})$, and thus 
vanishes as $N\to+~\infty$ and $\varepsilon\to0$.
Therefore, modulo negligible terms, $\mcb B_t^{N,+}$ can be written as 
\[
\begin{aligned}
\mcb B^{N,+}_t(f)&=\frac E2\int_0^t \dd s \sum_{x\in\mathbb{T}_N}\nabla_N T_{ \frac E3 N^{3/2}s}f\left(\tfrac{x}{N}\right)\times\\
&\quad\quad\quad\times\Big\{\Big(\langle\bar\pi^{N,A}_s,\ola{\rho_\varepsilon}(\cdot-\tfrac xN)\rangle+\langle\bar\pi^{N,B}_s,\ola{\rho_\varepsilon}(\cdot-\tfrac xN)\rangle\Big)\Big(\langle\bar\pi^{N,A}_s,\ora{\rho_\varepsilon}(\cdot-\tfrac xN)\rangle-\langle\bar\pi^{N,B}_s,\ora{\rho_\varepsilon}(\cdot-\tfrac xN)\rangle\Big)\\
&\quad\quad\quad\quad+\Big(\langle\bar\pi^{N,A}_s,\ola{\rho_\varepsilon}(\cdot-\tfrac xN)\rangle-\langle\bar\pi^{N,B}_s,\ola{\rho_\varepsilon}(\cdot-\tfrac xN)\rangle\Big)\Big(\langle\bar\pi^{N,A}_s,\ora{\rho_\varepsilon}(\cdot-\tfrac xN)\rangle+\langle\bar\pi^{N,B}_s,\ora{\rho_\varepsilon}(\cdot-\tfrac xN)\rangle\Big)\Big\}\,.
\end{aligned}
\]
To identify the density fields in the previous display, let us focus on the first summand, 
and note that by a change of variables
\begin{equation*}
\begin{aligned}
&\sum_{x\in\mathbb{T}_N}\nabla_N T_{ \frac E3 N^{3/2}s}f\left(\tfrac{x}{N}\right)\Big(\langle\bar\pi^{N,A}_s,\ola{\rho_\varepsilon}(\cdot-\tfrac xN)\rangle+\langle\bar\pi^{N,B}_s,\ola{\rho_\varepsilon}(\cdot-\tfrac xN)\rangle\Big)\Big(\langle\bar\pi^{N,A}_s,\ora{\rho_\varepsilon}(\cdot-\tfrac xN)\rangle-\langle\bar\pi^{N,B}_s,\ora{\rho_\varepsilon}(\cdot-\tfrac xN)\rangle\Big)\\
&=\sum_{x\in\mathbb{T}_N}\nabla_N f\left(\tfrac{x}{N}\right)\bigg(\left\langle\bar\pi^{N,A}_s,\ola{\rho_\varepsilon}\left(\cdot-\tfrac{x+\frac E3 N^{3/2}s}{N}\right)\right\rangle+\left\langle\bar\pi^{N,B}_s,\ola{\rho_\varepsilon}\left(\cdot-\tfrac{x+\frac E3 N^{3/2}s}{N}\right)\right\rangle\bigg)\times\\
&\hspace{3cm}\times \bigg(\left\langle\bar\pi^{N,A}_s,\ora{\rho_\varepsilon}\left(\cdot-\tfrac{x+\frac E3 N^{3/2}s}{N}\right)\right\rangle-\left\langle\bar\pi^{N,B}_s,\ora{\rho_\varepsilon}\left(\cdot-\tfrac{x+\frac E3 N^{3/2}s}{N}\right)\right\rangle\bigg)\\
&=\frac1N\sum_{x\in\mathbb{T}_N}\nabla_N f\left(\tfrac{x}{N}\right){\mcb {Z}}^{N,-}_s\left(\ola{ \rho_{\varepsilon}}\left(\tfrac{x+\frac 23E N^{3/2}s}{N}\right)\right){\mcb {Z}}^{N,+}_s(\ora{ \rho_{\varepsilon}}(\tfrac xN))
\end{aligned}
\end{equation*}
where the last passage follows arguing as in~\eqref{eq:closing_fields} and using the fact that fields are 
defined with opposite velocities. 
Applying the same strategy for the second summand, we get
\begin{equation}\label{eq:DynkinZ2b}
\begin{aligned}
\mcb B^{N,+}_t(f)=\frac E2\int_0^t \dd s \frac1N\sum_{x\in\mathbb{T}_N}\nabla_N f\left(\tfrac{x}{N}\right)\Big\{&{\mcb {Z}}^{N,-}_s\left(\ola{ \rho_{\varepsilon}}\left(\tfrac{x+\frac 23E N^{3/2}s}{N}\right)\right){\mcb {Z}}^{N,+}_s(\ora{ \rho_{\varepsilon}}(\tfrac xN))\\
&+{\mcb {Z}}^{N,+}_s(\ola{ \rho_{\varepsilon}}(\tfrac xN)){\mcb {Z}}^{N,-}_s\left(\ora{ \rho_{\varepsilon}}\left(\tfrac{x+\frac 23E N^{3/2}s}{N}\right)\right)\Big\}\,.
\end{aligned}
\end{equation}	

\begin{remark}\label{rem:NewBG}
The reason why we need to consider the novel version of the second-order Boltzmann-Gibbs principle is to control terms 
as those in~\eqref{eq:DynkinZ2b}, i.e. terms displaying products of fields living in moving frames with different velocities. 
Indeed, in order to determine their limiting behaviour we will need to invoke Theorem~\ref{thm:Cross} 
which only applies if the fields are tested against sufficiently smooth functions 
(and this would not be the case if $\ora{\rho_\varepsilon}$ and $\ola{\rho_\varepsilon}$ were replaced by 
$\ora{i_\varepsilon}$ and $\ola{i_\varepsilon}$).
%
%
%
%
\end{remark}

In conclusion, we obtain 
\begin{equation}\label{eq:DynkinZ2bb}
\begin{aligned}
{\mcb M}_t^{N,+}(f)=&\mcb Z^{N,+}_t(f)-\mcb Z^{N,+}_0(f)-\int_0^t \dd s {\mcb  Z}_s^{N,+}(\Delta_Nf)\\
&-\frac E2\int_0^t \dd s \frac1N\sum_{x\in\mathbb{T}_N}\nabla_N f\left(\tfrac{x}{N}\right)\Big\{{\mcb {Z}}^{N,-}_s\left(\ola{ \rho_{\varepsilon}}\left(\tfrac{x+\frac 23E N^{3/2}s}{N}\right)\right){\mcb {Z}}^{N,+}_s(\ora{ \rho_{\varepsilon}}(\tfrac xN))\\
&\hspace{4.5cm}+{\mcb {Z}}^{N,+}_s(\ola{ \rho_{\varepsilon}}(\tfrac xN)){\mcb {Z}}^{N,-}_s\left(\ora{ \rho_{\varepsilon}}\left(\tfrac{x+\frac 23E N^{3/2}s}{N}\right)\right)\Big\}+\mcb R_t^{N,+}(f)\,.
\end{aligned}
\end{equation}	
Now, Theorem \ref{thm:Cross} with $\varphi_1(\cdot)=\varphi_2(\cdot)=\ora{\rho_\varepsilon}(\cdot)$ 
suggests that the last term vanishes in  $L^2(\mathbb P_{\nu_{\rho}})$ as $N\to\infty$, so that 
the martingale ${\mcb M}^{N,+}(f)$ should converge to that in \eqref{eq:mart_prob} with $\lambda=0$. 
Therefore this field has a diffusive behaviour {(for any $\gamma>1/2$)}. 

}

\subsection{Case (II): $E_{B}-E_{A}=E_{C}-E_{A}=E$}\label{sec:case2}

{As previously observed in Remark~\ref{rem:(I)(II)}, 
this case can be obtained from case \textbf{(I)} by a simple trasformation of the fields, so we omit the details and present only the final formulation of the martingale problems.}
{Recall~\eqref{e:Case2}, i.e. 
\begin{equation*}
\begin{aligned}
\mcb Z_t^{N,+}(f)&=\mcb Y^{N,A}_t(T_{\frac E3 N^{3/2}t}f)+2\mcb Y^{N,B}_t(T_{\frac E3 N^{3/2}t}f)\,, \\
\mcb Z_t^{N,-}(f)&=\mcb Y^{N,A}_t(T_{-\frac E3 N^{3/2}t}f)\,.
\end{aligned}
\end{equation*}
}
For the first field, when $\gamma=1/2$, we have for $f\in\mcb D(\mathbb T)$,
{
\begin{equation*}
\mcb  M_t^{N,-}(f)={\mcb Z}^{N,-}_t(f)-{\mcb Z}^{N,-}_0(f)-\int_0^t \dd s \mcb Z_s^{N,-}(\Delta_N  f)+E\int_0^t \dd s \frac 1N \sum_{z\in\mathbb{T}_N}\nabla_N f\left(\tfrac{z}{N}\right)\left(\mcb{Z}^{N,-}_s(\ora{i_\varepsilon}(\tfrac zN))\right)^2.
\end{equation*} 
So we expect that any limit point $\mcb Z^{-}_t$ of the sequence  $\{\mcb Z^{N,-}_t\}_{N}$ 
satisfies~\eqref{eq:mart_prob} with $\lambda_-=E$ {and so in this case, i.e. $\gamma=1/2$, the field has KPZ behaviour.} 
As before, for $\gamma>1/2$, any crossed term in the occupation variables vanishes 
in the $N\to\infty$ limit and we observe diffusive behaviour. 
}
{
For the second field, we proceed exactly as in Section~\ref{sec:case1_2} and get to the following expression for the associated martingale
\begin{equation}\label{eq:Mart_II+}
\begin{aligned}
&\mcb M^{N,+}_t(f)=\mcb Z^{N,+}_t(f)-\mcb Z^{N,+}_0(f)-\int_0^t \dd s \mcb Z^{N,+}_s(\Delta_Nf)\\
&\hspace{2cm}+\frac E2\int_0^t \dd s \frac 1N\sum_{z\in\mathbb{T}_N}\nabla_Nf\left(\tfrac{z}{N}\right)\Big\{\mcb {Z}^{N,-}_s\Big(\ola{\rho_{\varepsilon}}\Big(\frac{z+\frac 23 E N^{3/2}s}{N}\Big)\Big){\mcb {Z}}^{N,+}_s(\ora{ \rho_{\varepsilon}}(\tfrac zN))\\
&\hspace{5cm}+{\mcb {Z}}^{N,+}_s(\ola{ \rho_{\varepsilon}}(\tfrac zN))\mcb {Z}^{N,-}_s\Big(\ora{\rho_{\varepsilon}}\Big(\frac{z+\frac 23 E N^{3/2}s}{N}\Big)\Big)\Big\}+\mcb R_t^{N,+}(f).
\end{aligned}
\end{equation}
{Theorem \ref{thm:Cross} implies that, as $N\to\infty$, the last term only contributes via the limiting mass of 
${\mcb {Z}}^{N,-}$},
which suggests that~\eqref{eq:Mart_II+} will converge to~\eqref{eq:mart_prob} with $\lambda=0$.
}

\subsection{Case (III): $E_{C}-E_{A}\neq E_{C}-E_{B}\neq 0$}\label{sec:case3}


In this case, we consider the generic fields given in \eqref{eq:gen_fields_n}. 
Let us start with $\mcb {Z}^{N,-}_t(f)$ and then we analyze $\mcb {Z}^{N,+}_t(f)$, 
keeping in mind that 
both modes are predicted to have KPZ behaviour. 
To make the presentation simpler, in this subsection we make
the choice $E_C=0$, as one can recover the general case by replacing every instance of $E_A$ and $E_B$ below 
with $E_A-E_C$ and $E_B-E_C$, respectively.

\subsubsection{The field $\mcb {Z}^{N,-}_t(f)$ }

Plugging the values $D_2=D_2^-=c_+/E_A$, $D_1=1$ and $v=v_-=-\tfrac\delta{2N^\gamma}$, for $\delta$ in~\eqref{e:delta}, 
into the formula~\eqref{dynkin_case_gen_n} 
and ignoring both the terms that are negligible in the limit as $N\to\infty$ and 
those killed with the choice of constants and velocities, we see that for any $f\in \mcb D(\mathbb T)$ we have 
\begin{equation}\label{dynkin_case_gen_n_nn}
	\begin{aligned}
		\mcb M_t^{N,-}(f)=&{\mcb Z}^{N,-}_t(f)-{\mcb Z}^{N,-}_0(f)-\int_0^t \dd s \mcb Z_s^{N,-}(\Delta_Nf)\\
		&+N^{1/2-\gamma}E_A\int_0^t \dd s \sum_{x\in\mathbb{T}_N}\nabla_N T_{-\frac\delta2 N^{2-\gamma}s}f\left(\tfrac{x}{N}\right)\bar\xi^A_{x}(s)\bar\xi^A_{x+1}(s)\\
		&+N^{1/2-\gamma}\frac{c_+E_{B}}{E_A}\int_0^t \dd s \sum_{x\in\mathbb{T}_N}\nabla_N T_{-\frac\delta2 N^{2-\gamma} s}f\left(\tfrac{x}{N}\right)\bar\xi^{B}_{x}(s)\bar\xi^{B}_{x+1}(s)\\
		&+N^{1/2-\gamma}\frac{E_B+c_+}{2}\int_0^t \dd s \sum_{x\in\mathbb{T}_N}\nabla_N T_{-\frac\delta2 N^{2-\gamma} s}f\left(\tfrac{x}{N}\right)\Big(\bar\xi^{A}_{x}(s)\bar\xi^{B}_{x+1}(s)+\bar\xi^{B}_{x}(s)\bar\xi^{A}_{x+1}(s)\Big).
	\end{aligned}
\end{equation}
The second order Boltzmann-Gibbs principle says that the last three lines in the last display vanish as $N\to\infty$ for $\gamma>1/2$, {and in this case the field has diffusive behaviour.}
When $\gamma=1/2$, in order  to close the equations in terms of the fields $\mcb {Z}^{N,-}_s$ and  $\mcb {Z}^{N,+}_s$ 
we first note that {the last three lines in last display} can be written as
\begin{equation}\label{eq:gen_field}
	\begin{aligned}
		&\int_0^t \dd s \sum_{x\in\mathbb{T}_N}\nabla_N T_{-\frac\delta2 N^{3/2}s}f\left(\tfrac{x}{N}\right)\Big\{E_A\bar\xi^A_{x}(s)\bar\xi^A_{x+1}(s)+ \frac{E_A+\tfrac32\delta}{2}\Big(\bar{\xi}^{A}_{x}(s)\bar{\xi}^{B}_{x+1}(s)+\bar{\xi}^{B}_{x}(s)\bar{\xi}^{A}_{x+1}(s)\Big)\\ &\quad\quad\quad\quad\quad\quad\quad\quad\quad\quad\quad\quad\quad\quad\quad\quad\quad\quad\quad +\frac{c_+E_{B}}{E_A}\bar\xi^{B}_{x}(s)\bar\xi^{B}_{x+1}(s)\Big\}\,.
	\end{aligned}
\end{equation} 
Let us look at the term in {parentheses.} Note that
\begin{equation}\label{e:linsys}
\begin{aligned}
\hspace{-.5cm}	E_A\bar\xi^A_{x}\bar\xi^A_{x+1}+ \frac{E_A+\tfrac32\delta}{2}\Big(\bar{\xi}^{A}_{x}\bar{\xi}^{B}_{x+1}+\bar{\xi}^{B}_{x}\bar{\xi}^{A}_{x+1}\Big)+\frac{c_+E_{B}}{E_A}\bar\xi^{B}_{x}\bar\xi^{B}_{x+1}&=\mathfrak c_1  \Big(\bar\xi_x^A+\frac{c_+}{E_A}\bar\xi_{x}^{B}\Big) \Big( \bar\xi_{x+1}^A+\frac{c_+}{E_A}\bar\xi_{x+1}^{B}\Big)\\
&+\mathfrak c_2 \Big( \bar\xi_x^A+\frac{c_+}{E_A}\bar\xi_{x}^{B}\Big) \Big( \bar\xi_{x+1}^A+\frac{c_-}{E_A}\bar\xi_{x+1}^{B}\Big)\\
&+ \mathfrak c_3 \Big( \bar\xi_x^A+\frac{c_-}{E_A}\bar\xi_{x}^{B}\Big)\Big( \bar\xi_{x+1}^A+\frac{c_+}{E_A}\bar\xi_{x+1}^{B}\Big)
\end{aligned}
\end{equation}
for $\mathfrak c_2=\mathfrak c_3=(E_A-\mathfrak c_1)/2$ and 
\begin{equation*}\begin{aligned}
\mathfrak c_1= \frac{E_A}{3\delta}(2E_{B}-E_{A}+\tfrac32\delta)=\frac{E_A}{3\delta}(E_B-c_-)
\end{aligned}
\end{equation*}
where $c_\pm$ and $\delta$ are defined in~\eqref{assumption} and~\eqref{e:delta}. 
From this we conclude that 
\eqref{eq:gen_field} can be rewritten as

\begin{equation*}
\begin{aligned}
&\int_0^t \dd s \sum_{x\in\mathbb{T}_N}\nabla_N T_{v_- N^2s}f\left(\tfrac{x}{N}\right)\Big\{\frac{E_A}{3\delta}(E_B-c_-)
\Big(\bar\xi_x^A+\frac{c_+}{E_A}\bar\xi_{x}^{B}\Big) \Big( \bar\xi_{x+1}^A+\frac{c_+}{E_A}\bar\xi_{x+1}^{B}\Big)\\&\quad\quad\quad\quad\quad\quad\quad\quad\quad\quad\quad\quad\quad\quad+
E_A\Big(\frac{1}2-\frac{E_B-c_-}{6\delta}\Big)
\Big( \bar\xi_x^A+\frac{c_+}{E_A}\bar\xi_{x}^{B}\Big) \Big( \bar\xi_{x+1}^A+\frac{c_-}{E_A}\bar\xi_{x+1}^{B}\Big)
\\&\quad\quad\quad\quad\quad\quad\quad\quad\quad\quad\quad\quad\quad\quad+ 	E_A\Big(\frac{1}2-\frac{E_B-c_-}{6\delta}\Big) \Big( \bar\xi_x^A+\frac{c_-}{E_A}\bar\xi_{x}^{B}\Big)\Big( \bar\xi_{x+1}^A+\frac{c_+}{E_A}\bar\xi_{x+1}^{B}\Big)
\Big\}.
\end{aligned}
\end{equation*}
Now, in the first line we apply the standard version of the second order Boltzmann--Gibbs principle, and  in the second and  third we use the  new formulation proposed in Theorem~\ref{thm:BG_v2}, so that overall the previous display equals 
\begin{equation*}
\begin{aligned}
&\int_0^t \dd s \sum_{x\in\mathbb{T}_N}\nabla_N T_{v_- N^2s}f\left(\tfrac{x}{N}\right)\Big\{{\frac{E_A}{3\delta}(E_B-c_-)}
\mcb Z_s^{N,-}(\overleftarrow{\iota_\varepsilon}(\tfrac xN))\mcb Z_s^{N,-}(\overrightarrow{\iota_\varepsilon}(\tfrac xN))\\&\quad\quad\quad\quad\quad\quad\quad\quad\quad\quad\quad\quad\quad\quad+
E_A\Big(\frac{1}2-\frac{E_B-c_-}{6\delta}\Big)
\mcb Z_s^{N,-}(\overleftarrow{\rho_\varepsilon}(\tfrac xN)) \mcb Z_s^{N,+}\Big(\overrightarrow{\rho_\varepsilon}\Big(\frac{x-2\frac E3N^{3/2}s}{N}\Big)\Big) 
\\&\quad\quad\quad\quad\quad\quad\quad\quad\quad\quad\quad\quad\quad\quad+ 	E_A\Big(\frac{1}2-\frac{E_B-c_-}{6\delta}\Big)\mcb Z_s^{N,+}\Big(\overleftarrow{\rho_\varepsilon}\Big(\frac{x-2\frac E3N^{3/2}s}{N}\Big)\Big) \mcb Z_s^{N,-}\Big(\overrightarrow{\rho_\varepsilon}\Big(\tfrac{x}{N}\Big)\Big) 
\Big\}.
\end{aligned}
\end{equation*}
As in the previous cases, when $N\to\infty$, the terms in the last two lines of the last display 
{only produce a massive term} 
and the only term that survives is the first, so that $\mcb {Z}^{N,-}_t(f)$ has KPZ behaviour {(for $\gamma=1/2$)}.

\subsubsection{The field $\mcb {Z}^{N,+}_t(f)$ }

Dynkin's formula for the field $\mcb {Z}^{N,+}_t$ is identical to the expression in~\eqref{dynkin_case_gen_n_nn}, 
with $c_-$ instead of $c_+$ and $\delta$ replaced by $-\delta$. For $\gamma>1/2$ we know that terms in $\mcb B_t^{N,+}$ vanish as $N\to\infty$ while for $\gamma=1/2$ they have a non-trivial limit. As above, we want to write the current in terms of the fluctuation fields 
$\mcb {Z}^{N,-}_s$ and  $\mcb {Z}^{N,+}_s$. To do so, we solve the same linear system as in~\eqref{e:linsys} 
but in which we exchange $c_+$ and $c_-$. This leads us to 
\begin{equation}
\begin{aligned}
&\mcb B_t^{N,+}(f)=\int_0^t \dd s \sum_{x\in\mathbb{T}_N}\nabla_N T_{v N^2s}f\left(\tfrac{x}{N}\right)\Big\{\frac{E_A}{3\delta}(c_+-E_B)
\Big( \bar\xi_x^A+\frac{c_-}{E_A}\bar\xi_{x}^{B}\Big) \Big( \bar\xi_{x+1}^A+\frac{c_-}{E_A}\bar\xi_{x+1}^{B}\Big)\\&\quad\quad\quad\quad\quad\quad\quad\quad\quad\quad\quad\quad\quad\quad+
E_A\Big(\frac 12-\frac{c_+-E_B}{6\delta}\Big)
\Big(\bar\xi_x^A+\frac{c_+}{E_A}\bar\xi_{x}^{B}\Big) \Big( \bar\xi_{x+1}^A+\frac{c_-}{E_A}\bar\xi_{x+1}^{B}\Big)
\\&\quad\quad\quad\quad\quad\quad\quad\quad\quad\quad\quad\quad\quad\quad+ 	E_A\Big(\frac 12-\frac{c_+-E_B}{6\delta}\Big) \Big(\bar\xi_x^A+\frac{c_-}{E_A}\bar\xi_{x}^{B}\Big)\Big(\bar\xi_{x+1}^A+\frac{c_+}{E_A}\bar\xi_{x+1}^{B}\Big)
\Big\}.
\end{aligned}
\end{equation}
Arguing exactly as above, we ultimately get
\begin{equation}\label{eq:Z2Z1}
\begin{aligned}
&\mcb B_t^{N,+}(f)=\int_0^t \dd s \sum_{x\in\mathbb{T}_N}\nabla_N T_{v N^2s}f\left(\tfrac{x}{N}\right)\Big\{\frac{E_A}{3\delta}(c_+-E_B)
\mcb Z_s^{N,+}(\overleftarrow{\iota_\varepsilon}(\tfrac xN))\mcb Z_s^{N,+}(\overrightarrow{\iota_\varepsilon}(\tfrac xN))\\&\quad\quad\quad\quad\quad\quad\quad\quad\quad\quad\quad\quad\quad\quad+
E_A\Big(\frac 12-\frac{c_+-E_B}{6\delta}\Big)
\mcb Z_s^{N,+}(\overleftarrow{\rho_\varepsilon}(\tfrac xN)) \mcb Z_s^{N,-}\Big(\overrightarrow{\rho_\varepsilon}\Big(\frac{x-2\frac E3N^{3/2}s}{N}\Big)\Big) 
\\&\quad\quad\quad\quad\quad\quad\quad\quad\quad\quad\quad\quad\quad\quad+ 	E_A\Big(\frac 12-\frac{c_+-E_B}{6\delta}\Big) \mcb Z_s^{N,-}\Big(\overleftarrow{\rho_\varepsilon}\Big(\frac{x-2\frac E3N^{3/2}s}{N}\Big)\Big) \mcb Z_s^{N,+}\Big(\overrightarrow{\rho_\varepsilon}\Big(\tfrac{x}{N}\Big)\Big) 
\Big\},
\end{aligned}
\end{equation}
which suggests that also this field has KPZ behaviour {for $\gamma=1/2$.}

\section{Proof of Theorem~\ref{thm:main_new}}\label{sec:proof_main_theo}

{In this section, we prove the main result of the paper. 
We will first show tightness of the fluctuation fields and then uniquely characterise the limit points by verifying 
they satisfy either Definition~\ref{def:energy_solution} or~\ref{def:OU}. }

\subsection{Tightness}\label{sec:tightness}

{In this section we prove not only tightness of the sequences of processes $\{\mcb Z^{N,\pm}_t,t\in[0,T]\}_{N\in\N}$ with respect to the uniform topology of $D([0,T],\mcb D'(\mathbb T))$, in each of the cases {\bf (I)}-{\bf(III)} above, 
but further show some properties the fluctuation fields enjoy and that will be necessary to control 
terms containing their cross product. 
The main result of the section is the following proposition. 

\begin{prop}\label{prop:Tightness}
In any of the cases {\bf\textrm{(I)}-\bf\textrm{(III)}} in Section~\ref{sec:flucfields}, 
the sequences of processes $\{\mcb Z^{N,\pm}_t)\,:\,t\in[0,T]\}_{N\in\mathbb N}$ are tight in $D([0,T]; \mcb D'(\mathbb T))$. 

Furthermore, there exists a constant $C>0$ such that for any $f\in \mcb D(\T)$
and $N\in\N$, the following bounds hold 
\begin{align}
&\E_{\nu_\rho}\left[\sup_{s\leq T} |\mcb{Z}^{N,\pm}_s(f)|^2\right]\lesssim \Big(\|\Delta f\|_2^2\vee\|\nabla f\|_2^2+\frac{1}{N^3}\|f\|_{\infty}^2\Big)\,,\label{e:supZ}\\
&\E_{\nu_\rho}\left[\Big(\mcb{Z}^{N,\pm}_t(f)-\mcb{Z}^{N,\pm}_s(f)\Big)^2\right]\lesssim (t-s)\Big(\|\Delta f\|_2^2\vee\|\nabla f\|_2^2\Big)\,,\qquad \textrm{for all} \,\,0\leq s<t\leq T,\label{e:HolZ}
\end{align}
where $\|\cdot\|_2$ denotes the usual $L^2(\T)$-norm.
\end{prop}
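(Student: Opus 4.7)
The plan is to start from the Dynkin martingale decomposition already derived in Section~\ref{sec:martn}, namely
\[
\mcb Z^{N,\pm}_t(f) = \mcb Z^{N,\pm}_0(f) + \mcb I^{N,\pm}_t(f) + \mcb B^{N,\pm}_t(f) + \mcb R^{N,\pm}_t(f) + \mcb M^{N,\pm}_t(f),
\]
and to establish the two moment bounds \eqref{e:supZ} and \eqref{e:HolZ} by controlling each of the five summands separately. Once these are in place, tightness in $D([0,T], \mcb D'(\mathbb T))$ will follow from Mitoma's criterion (which reduces tightness in $\mcb D'(\mathbb T)$ to tightness of $\{\mcb Z^{N,\pm}_\cdot(f)\}_N$ in $D([0,T],\R)$ for each fixed $f \in \mcb D(\mathbb T)$) combined with the Aldous--Rebolledo criterion applied to the integrand and to the quadratic variation of the martingale part.

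For the initial-data term $\mcb Z^{N,\pm}_0(f)$, stationarity under $\nu_\rho$ together with the product structure gives the bound $\E_{\nu_\rho}[\mcb Z^{N,\pm}_0(f)^2] \lesssim \|f\|_2^2 + N^{-1}\|f\|_\infty^2$ via a Riemann-sum approximation. For the integral term $\mcb I^{N,\pm}_t(f) = \int_0^t \mcb Z^{N,\pm}_s(\Delta_N f)\,ds$, Cauchy--Schwarz in time together with stationarity yields $\E_{\nu_\rho}[\mcb I^{N,\pm}_t(f)^2] \lesssim t^2 \|\Delta_N f\|_2^2 \lesssim t^2 \|\Delta f\|_2^2$, where the discrete Laplacian is controlled by the continuous one uniformly in $N$ for smooth $f$. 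For the martingale $\mcb M^{N,\pm}_t(f)$, Doob's $L^2$-inequality combined with the explicit formula \eqref{eq:QV} for the quadratic variation and Lemma~\ref{lemmaqv} gives $\E_{\nu_\rho}[\sup_{s \leq T}\mcb M^{N,\pm}_s(f)^2] \lesssim T \|\nabla f\|_2^2$ uniformly in $N$, and a similar argument (using $\E_{\nu_\rho}[(\mcb M^{N,\pm}_t - \mcb M^{N,\pm}_s)^2] = \E_{\nu_\rho}[\langle \mcb M^{N,\pm}\rangle_t - \langle \mcb M^{N,\pm}\rangle_s]$) gives the Hölder estimate $\lesssim (t-s)\|\nabla f\|_2^2$. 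The remainder $\mcb R^{N,\pm}$ is negligible in $L^2(\Pb_{\nu_\rho})$ by construction.

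The main obstacle is the control of the quadratic term $\mcb B^{N,\pm}_t(f)$, particularly in the critical regime $\gamma = 1/2$ where it does not vanish. In the regime $\gamma > 1/2$ one can crudely bound each summand in \eqref{eq:quadratic_term_gen} using the trivial estimate $|\bar\xi^\alpha_x \bar\xi^\beta_{x+1}| \leq 1$ and the prefactor $N^{1/2 - \gamma} \to 0$, yielding second moment of order $N^{1-2\gamma} \cdot t \|\nabla f\|_2^2 \to 0$. In the regime $\gamma = 1/2$ instead one cannot simply throw away the prefactor; here one must invoke the second-order Boltzmann--Gibbs principle (Theorem~\ref{thm:BG}) to replace, up to $L^2$-vanishing errors, the quadratic densities by products of box averages, and then use that the resulting expression has second moment bounded by $C t^2 \|\nabla f\|_2^2$ uniformly in $N$ and in the averaging scale. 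A Cauchy--Schwarz in time then turns the $t^2$ into $(t-s)^2$ for the increments, which is even stronger than what is needed.

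Combining these four estimates yields \eqref{e:HolZ} and, after applying Doob's inequality to the martingale and using continuity of the other summands in $t$, also \eqref{e:supZ}. Finally, tightness follows: Mitoma's criterion together with the Aldous condition (verified via Markov's inequality applied to \eqref{e:HolZ} for the continuous part of the decomposition and via the quadratic variation bound for the martingale part) gives tightness of $\{\mcb Z^{N,\pm}_\cdot(f)\}_N$ for each $f \in \mcb D(\mathbb T)$, and hence of $\{\mcb Z^{N,\pm}_\cdot\}_N$ in $D([0,T], \mcb D'(\mathbb T))$.
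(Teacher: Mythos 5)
Your overall architecture (Dynkin decomposition, Mitoma reduction, separate moment bounds for $\mcb Z^{N,\pm}_0$, $\mcb I^{N,\pm}$, $\mcb M^{N,\pm}$, $\mcb R^{N,\pm}$) matches the paper's proof, and your treatment of the initial term, the Laplacian term and the martingale (Doob/Aldous in place of the paper's BDG plus the Jacod--Shiryaev convergence theorem) is sound for the purposes of this proposition. The genuine gap is in your treatment of the quadratic term $\mcb B^{N,\pm}$ for $\gamma>1/2$. The ``trivial estimate'' does not give a second moment of order $N^{1-2\gamma}$: the lattice sum over $x\in\T_N$ is not free. Bounding $|\bar\xi^\alpha_x\bar\xi^\beta_{x+1}|\leq 1$ pointwise yields $|\mcb B^{N,\pm}_t(f)|\lesssim N^{1/2-\gamma}\,t\,\sum_x|\nabla_N f(\tfrac xN)|\sim N^{3/2-\gamma}$, and even the sharpest static argument (Cauchy--Schwarz in time plus the fact that, under the product measure, $\bar\xi^\alpha_x\bar\xi^\beta_{x+1}$ and $\bar\xi^\alpha_y\bar\xi^\beta_{y+1}$ are centred and uncorrelated for $|x-y|>1$) only gives
\begin{equation*}
\E_{\nu_\rho}\big[(\mcb B^{N,\pm}_t(f))^2\big]\;\lesssim\; N^{1-2\gamma}\,t^2\sum_{x\in\T_N}\big(\nabla_N f(\tfrac xN)\big)^2\;\sim\; N^{2-2\gamma}\,t^2\,\|\nabla f\|_2^2\,,
\end{equation*}
which is not even bounded uniformly in $N$ for $\gamma\in(\tfrac12,1)$, let alone vanishing. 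So static bounds cannot deliver \eqref{e:supZ}--\eqref{e:HolZ} in the whole range $\gamma\geq 1/2$ that the proposition must cover; a dynamical ($H_{-1}$-type) input is needed for \emph{all} $\gamma\geq 1/2$, not only at $\gamma=1/2$.

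The fix is exactly the mechanism you sketch for $\gamma=1/2$, but it must be quantified: add and subtract $(\ora{\xi}^{\alpha,L}_x)^2+\chi_\alpha/L$, bound the replacement error by the second-order Boltzmann--Gibbs principle (Theorem~\ref{thm:BG}), which with $v=N^{1/2-\gamma}\nabla_N f$ costs $(t-s)LN^{-2\gamma}\|\nabla f\|_2^2+(t-s)^2N^{2-2\gamma}L^{-2}\|\nabla f\|_2^2$, and bound the centred box-average term by restricting the double sum to $|x-y|\leq L$, which costs $(t-s)^2N^{2-2\gamma}L^{-1}\|\nabla f\|_2^2$. None of these is ``uniform in the averaging scale'': your claim that the replaced expression is controlled uniformly in $L$ is false, and the argument only closes after optimizing, e.g.\ $L=\sqrt{t-s}\,N^{2\gamma}$, which yields the increment bound $(t-s)^{3/2}\|\nabla f\|_2^2$ uniformly in $N$ for every $\gamma\geq 1/2$ --- this is precisely the paper's route, and it simultaneously repairs your $\gamma>1/2$ case and makes your $\gamma=1/2$ sketch rigorous.
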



\begin{proof}\label{sec:tightZ}
By Mitoma's criterion~\cite{mitoma}, tightness for $\{\mcb Z^{N,\pm}_t,t\in[0,T]\}_{N\in\N}$ follows 
upon showing that, for any $f\in\mcb D (\T)$, the sequences $\{\mcb Z^{N,\pm}_t(f),t\in[0,T]\}_{N\in\N}$ are tight. 

Now, notice that, in each of the cases {\bf (I)}-{\bf(III)} of the previous section, 
the fields $\mcb Z^{N,\pm}(f)$ can be written, in their most general form, 
as in~\eqref{dynkin_case_gen_n} where $a,b,D_1,D_2$ and $v$ are chosen in such a way 
that the fourth, fifth and eighth lines sum up to $0$. 
In other words, we look at the generic field $\mcb Z^{N}_t(f)$ in~\eqref{eq_gen_field}, 
which is given by 
\begin{equation}\label{e:generic}
\begin{aligned}
{\mcb Z}^{N}_t(f)&={\mcb Z}^{N}_0(f)+\mcb I_t^{N}(f)+\mcb B_t^{N}(f)+\mcb R_t^{N}(f)+{\mcb M}^{N}_t(f)
\end{aligned}
\end{equation}
where ${\mcb M}^{N}_t(f)$ is the martingale whose quadratic variation is given in~\eqref{eq:QV}, 
\begin{align}
\mcb I^{N}_t(f)&:=\int_0^t \dd s \mcb Z^{N}_s(\Delta_N  f)\label{eq:INt}\\
\mcb B^{N}_t(f)&:=N^{\frac12-\gamma}\int_0^t \dd s \sum_{x\in\mathbb{T}_N}\nabla_N T_{vN^2 s}f\left(\tfrac{x}{N}\right)\Big(\mathfrak C_1\bar\xi^A_{x}(s)\bar\xi^A_{x+1}(s)+\mathfrak C_2\bar\xi^{B}_{x}(s)\bar\xi^{B}_{x+1}(s)\nonumber\\
&\quad \quad \quad\quad \quad \quad \quad \quad \quad  \quad\quad \quad\quad \quad \quad \quad\quad \quad+\mathfrak C_3(\bar\xi^{A}_{x}(s)\bar\xi^{B}_{x+1}(s)+\bar\xi^{B}_{x}(s)\bar\xi^{A}_{x+1}(s))\Big)\,.\label{eq:KNt}
\end{align}
for suitable choices of the constants $\mathfrak C_i$, $i=1,2,3$, and $\mcb R_t^{N}(f)$ consists of the 
sixth and seventh lines in~\eqref{dynkin_case_gen_n}, so that its second moment goes to $0$ 
as $N\to\infty$. 
We will separately analyse the terms on the right-hand side of~\eqref{e:generic} and show that each of them is 
tight and satisfies~\eqref{e:supZ} and~\eqref{e:HolZ}. 
\medskip 

The process $\{\mcb I^{N}_t,t\in[0,T]\}_{N\in\N}$ can be explicitly written as 
\begin{equation}\label{e:IHol}
\begin{aligned}
{\mcb I}^{N}_{t}(f)=\int_{0}^{t}\dd s\frac{1}{\sqrt N}\sum_{x\in\mathbb T_N}\Big(D_1\bar{\xi}^{A}_x(s)+D_2\bar{\xi}^{B}_x(s)\Big)\Delta_N T_{vN^{2}s}f\left(\tfrac xN\right)\,.
\end{aligned}
\end{equation}
We consider its increment and apply the Cauchy--Schwarz inequality to the time integral thus obtaining
\begin{equation}\label{e:IHol}
\begin{aligned}
\E_{\nu_\rho}\left[({\mcb I}^{N}_{t}(f)-{\mcb I}^{N}_s(f))^2\right]&=\E_{\nu_\rho}\Bigg[\Bigg(\int_{s}^{t}\dd r\frac{1}{\sqrt N}\sum_{x\in\mathbb T_N}\Big(D_1\bar{\xi}^{A}_x(r)+D_2\bar{\xi}^{B}_x(r)\Big)\Delta_N T_{vN^2r}f\left(\tfrac xN\right)\Bigg)^2\Bigg]\\
&\leq  (t-s)\int_s^t\dd r\E_{\nu_\rho}\Bigg[\Bigg(\frac{1}{\sqrt N}\sum_{x\in\mathbb T_N}\Big(D_1\bar{\xi}^{A}_x(r)+D_2\bar{\xi}^{B}_x(r)\Big)\Delta_N T_{vN^2r}f\left(\tfrac xN\right)\Bigg)^2\Bigg]\\
&= (t-s)\int_s^t\dd r\frac{1}{N}\sum_{x\in\mathbb T_N}\left[\Delta_N T_{vN^{2 }r}f\left(\tfrac xN\right)\right]^2\E_{\nu_\rho}\left[\left(D_1\bar{\xi}^{A}_x+D_2\xi^{B}_x\right)^2\right]\\
& \lesssim   (t-s)^2 \|\Delta f\|^2_2\,,
\end{aligned}
\end{equation}
where we used the fact that  the product measure $\nu_\rho$ is stationary 
and that the expected value in the second to last line is bounded by a constant. 

The previous bound guarantees, on the one hand that~\eqref{e:HolZ} holds, and on the other, 
by the Kolmogorov criterion, that the sequence of processes 
$\{\mcb I^{N}_t,t\in[0,T]\}_{N\in\N}$ is tight with respect to
the uniform topology of $\mathcal C([0,T],\R)$, satisfies~\eqref{e:supZ} 
and any limit point has $\alpha$-H\"older-continuous trajectories for any $\alpha<1/2$,
\medskip 

We now turn to the sequence of quadratic terms $\{\mcb B^{N}_t,t\in[0,T]\}_{N\in\N}$ in~\eqref{eq:KNt}.
It suffices to prove tightness for each of the summands, which are all of the form 
\begin{align}\label{eq:quad_term}
\mcb K^{N,\alpha,\beta}_t(f)&:=N^{\frac12-\gamma}\int_0^t \dd s \sum_{x\in\mathbb{T}_N}\nabla_N T_{vN^2 s}f\left(\frac{x}{N}\right)\bar\xi^\alpha_{x}(s)\bar\xi^\beta_{x+1}(s)\,
\end{align}
where $\alpha,\beta\in\{A,B\}$. We present the proof only in the case $\alpha=\beta$, 
being it the hardest since it involves an extra bound on the variance of the occupation variables. 
As $\alpha=\beta$ and $\alpha$ will be fixed throughout, we shorten the notation 
by omitting the corresponding index from $\mcb K^{N,\alpha,\beta}=\mcb K^{N}$. 

In order to estimate the second moment of the increments of $\mcb K^{N}(f)$, 
we will use the second order Boltzmann--Gibbs principle, 
which allows to replace the single-species degree-two term $\bar\xi^\alpha_x\bar\xi^\alpha_{x+1}$ 
with the local function of the configuration $(\ora{\xi}^{\alpha,L}_x)^2$, for some $L$ to be determined. 
If we sum and subtract in the integral in \eqref{eq:quad_term} $\big(\ora{\xi}^{\alpha,L}_x(sN^a)\big)^2+\frac{\chi_\alpha}{L}$, with $\chi_\alpha={\rm Var}(\xi^\alpha_x)$ and use the convex  inequality $(x+y)^2\leq 2x^2+2y^2$, then,  for any $0\leq s<t\leq T$ it holds 
\begin{equation}\label{eq:Ktight}
\begin{aligned}
\E_{\nu_\rho}&\left[({\mcb K}^{N}_{t}(f)-{\mcb K}^{N}_{s}(f))^2\right]\\
\leq& 2N^{1-2\gamma}\E_{\nu_\rho}\Bigg[\Bigg(\int_s^t\dd r\sum_{x\in\mathbb T_N}\nabla_NT_{v N^2 r}f\left(\tfrac xN\right)\Big\{\bar\xi^{\alpha}_{x}(r)\bar\xi^{\alpha}_{x+1}(r)-\big(\ora{\xi}^{\alpha,L}_x(r)\big)^2+\frac{\chi_\alpha}{L}\Big\}\Bigg)^2\Bigg]\\
&+2N^{1-2\gamma}\E_{\nu_\rho}\Bigg[\Bigg(\int_s^t\dd r\sum_{x\in\mathbb T_N}\nabla_NT_{v N^2 r}f\left(\tfrac xN\right)\Big\{\big(\ora{\xi}^{\alpha,L}_x(r)\big)^2-\frac{\chi_\alpha}{L}\Big\}\Bigg)^2\Bigg].
\end{aligned}
\end{equation}
Theorem~\ref{thm:BG} implies that the first term is bounded above by 
\begin{equation}\label{eq:Ktight1}
\begin{aligned}
N^{1-2\gamma}\left\{\frac{L}{N}+\frac{(t-s)N}{L^2}\right\}\int_s^t\dd r\frac1N&\sum_{x\in\mathbb T_N}\left[\nabla_NT_{v N^2r}f\left(\tfrac xN\right)\right]^2\lesssim N^{1-2\gamma}\left\{\frac{(t-s)L}{N}+\frac{(t-s)^2N}{L^2}\right\}\|\nabla f\|^2_2\,,
\end{aligned}
\end{equation}
while, from the  Cauchy--Schwarz inequality, the second term can be bounded above by 
\begin{equation}\label{eq:Ktight2}
\begin{aligned}
(t-s)N^{1-2\gamma}\int_s^t \dd r\sum_{x,y\in\mathbb T_N}&\nabla_NT_{vN^2r}f\left(\tfrac xN\right)\nabla_NT_{vN^2r}f\left(\tfrac yN\right)\times\\
&\times\E_{\nu_\rho}\Bigg[\left(\left(\ora{\xi}^{\alpha,L}_x(r)\right)^2-\frac{\chi_\alpha}{L}\right)\left(\left(\ora{\xi}^{\alpha,L}_y(r)\right)^2-\frac{\chi_\alpha}{L}\right)\Bigg]\,.
\end{aligned}
\end{equation}
Note that for $|x-y|>L$ the variables $\ora{\xi}^{\alpha,L}_x(r)$ and $\ora{\xi}^{\alpha,L}_y(r)$ are uncorrelated,  and 
therefore we can restrict the double sum to $|x-y|\leq L$. We then apply again the convex inequality 
$2xy\leq x^2+y^2$, to estimate the previous display by a constant times
\begin{equation}\label{eq:Ktight3}
\begin{aligned}
(t-s)L&N^{1-2\gamma}\int_s^t \dd r\sum_{x\in\mathbb T_N}\Big(\nabla_NT_{vN^2 r}f\left(\tfrac xN\right)\Big)^2\E_{\nu_\rho}\Bigg[\left(\left(\ora{\xi}^{\alpha,L}_x(r)\right)^2-\frac{\chi_\alpha}{L}\right)^2\Bigg]
\leq \frac{(t-s)^2}{L}N^{2-2\gamma} \|\nabla f\|^2_{2}\,.
\end{aligned}
\end{equation}
Putting together~\eqref{eq:Ktight1} and~\eqref{eq:Ktight2}, and choosing $L=\sqrt{(t-s)} N^{2\gamma}$, we get
\begin{equation}\label{eq:Ktight_last}
\begin{aligned}
&\E_{\nu_\rho}\left[({\mcb K}^{N}_{t}(f)-{\mcb K}^{N}_{s}(f))^2\right]\lesssim (t-s)^{3/2}\|\nabla f\|^2_{2}.
\end{aligned}
\end{equation}
Since the previous computations can be  derived for any $\alpha, \beta\in\{A,B\}$, we conclude that 
\begin{equation}\label{eq:HolB_q}
\begin{aligned}
\E_{\nu_\rho}\left[({\mcb B}^{N}_{t}(f)-{\mcb B}^{N}_{s}(f))^2\right]\lesssim (t-s)^{3/2}\|\nabla f\|^2_{2}\,.
\end{aligned}
\end{equation}
We can now argue as for $\{\mcb I^{N}_t,t\in[0,T]\}_{N\in\N}$ to conclude that tightness,~\eqref{e:supZ} 
and~\eqref{e:HolZ} hold for $\{\mcb B^{N}_t\colon t\in[0,T]\}_{N\in\N}$. 
\medskip

At last, we consider the sequence of martingales $\{{\mcb M}^{N}(f)\}_N$. We will prove a much stronger statement, 
namely that the sequence indeed converges (so that in particular is tight) and that 
the limit is a Brownian motion with an explicit covariance. To do so, we exploit~\cite[Theorem VIII, 3.12]{lib} 
that we state below for the reader's convenience. 

\begin{thm}
	\label{Shir}
	Let $\{M^N\}_{N \in\mathbb N}$ be a sequence of martingales belonging to the space $ D ([0,T]; \mathbb R)$ and denote by $\langle M^N\rangle$ the quadratic variation of $M^N$, for any $N\in\mathbb N$. 
	Let $c\colon [0,T]\to[0,\infty)$ be a deterministic continuous function. Assume that
	\begin{enumerate}
		\item for any $N\in\mathbb N$, the quadratic variation process $\langle M^N\rangle_t$ has continuous trajectories almost surely,
		\item the following limit holds
		\begin{equation}
			\lim_{N \rightarrow \infty}\mathbb{E}_{\nu_{\rho}}\Big[\sup_{0\leq s \leq T}\Big|M_s^N-M_{s^-}^N\Big|\Big]=0,
			\label{maxjump}
		\end{equation}
		\item for any $t \in [0,T]$ the sequence of random variables $\{\langle M^N\rangle_t\}_{N> 1}$ converges in probability to  $c(t)$. 
	\end{enumerate}
	Then, the sequence $\{M^N\}_{N}$ converges in law in $D([0,T]; \R)$, as $N$ goes to infinity,  to a mean zero Gaussian process $M$ which is a martingale on $[0,T]$ with continuous trajectories and whose quadratic variation is given by $c$.
\end{thm}

Point 1 in the previous statement is trivially satisfied by our sequence $\{{\mcb M}^{N}(f)\}_N$ 
as the quadratic variation in~\eqref{eq:QV} is clearly continuous. Now, for point 2, note that
\begin{equation}\label{eq:sup_mart}
\sup_{0\leq s\leq T}\Big|\mcb M_s^{N}(f)-\mcb M_{s^-}^{N}(f)\Big|
=\sup_{0\leq s\leq T}\Big|\mcb Z_s^{N}(f)-\mcb Z_{s^-}^{N}(f)\Big|\,.
\end{equation}
To bound the right-hand side, we consider a particle exchange happening at time $s$ on the bond $\{x,x+1\}$. 
If at time $s^-$, we observe the pair $(A,B)$, then at time $s$ we will have the pair $(B,A)$. Therefore, 
at site $x$,
\[
\Big(D_1\bar{\xi}^{A}_x(s)+D_2\bar{\xi}^{B}_x(s)\Big)-\Big(D_1\bar{\xi}^{A}_x(s^-)+D_2\bar{\xi}^{B}_x(s^-)\Big)=D_2-D_1\,,
\]
while at site $x+1$,
\[
\Big(D_1\bar{\xi}^{A}_{x+1}(s)+D_2\bar{\xi}^{B}_{x+1}(s)\Big)-\Big(D_1\bar{\xi}^{A}_{x+1}(s^-)+D_2\bar{\xi}^{B}_{x+1}(s^-)\Big)=D_1-D_2\,.
\]
From this we get 
\begin{equation*}
\Big|\mcb Z_s^{N,\pm}(f)-\mcb Z_{s^-}^{N,\pm}(f)\Big|=\frac{1}{\sqrt N} \Big(f(\tfrac xN)-f(\tfrac {x+1}{N})\Big)(D_2-D_1)\leq \frac{1}{N^{3/2}}\|\nabla  f\|_{\infty}
\end{equation*}
and, since the right-hand side does not depend on $s$, point 2 trivially follows. 
For point 3, we first state a crucial lemma whose proof is provided at the end of the section. 

\begin{lem}\label{lem:conv_QV}
For any $f\in\mcb D(\T)$, it holds 
\begin{equation*}
	\begin{aligned}
		\lim_{N\to\infty}\mathbb E_{\nu_\rho}\left[\left(\langle\mcb M^{N}(f)\rangle_t- \mathbb E_{\nu_\rho}[\langle\mcb M^{N}(f)\rangle_t]\right)^2\right]=0.
	\end{aligned}
\end{equation*}
\end{lem}

The previous statement, together with Lemma~\ref{lemmaqv} completes the verification of the 
assumptions of Theorem~\ref{Shir}. Hence, the sequence $\{{\mcb M}^{N}(f)\}_N$ is tight and converges in 
law to the Gaussian process whose quadratic variation is given by the right hand side of~\eqref{eq:lim_QV}. 

It remains to prove the validity of~\eqref{e:supZ} and~\eqref{e:HolZ} for $\{{\mcb M}^{N}(f)\}_N$. 
For the first, by Burkholder--Davis--Gundy inequality, \eqref{eq:lim_QV} and  \eqref{eq:sup_mart} we obtain 
\begin{equation}\label{e:SupM}
\begin{aligned}
\E_{\nu_\rho}\left[\sup_{t\leq T}\Big({\mcb M}^{N}_{t}(f)\Big)^2\right]\leq&\E_{\nu_\rho}\left[\langle{\mcb M}^{N}(f)\rangle_T\right] +
\E_{\nu_\rho}\left[\sup_{t\leq T}\Big|{\mcb M}^{N}_t(f)-{\mcb M}^{N}_{t^-}(f)\Big|^2\right] \\
\lesssim& T \|\nabla f\|^2_2 + N^{-3} \|\nabla f\|_\infty^2\,.
\end{aligned}
\end{equation}
For the latter instead, recalling the expression of the quadratic variation of the martingale given in \eqref{eq:QV}, 
we see that for any $0\leq s<t\leq T$, we have 
\begin{equation}\label{e:HolM}
\begin{aligned}
&\E_{\nu_\rho}\left[({\mcb M}^{N}_{t}(f)-{\mcb M}^{N}_{s}(f))^2\right]\\
&=\E_{\nu_\rho}\Bigg[\int_s^t \dd r\frac1N \sum_x c_x(\eta)(\nabla_N T_{v N^2r}f(\tfrac xN))^2\big[(D_1\xi^{A}_{x+1}(r)+D_2\xi^{B}_{x+1}(r))-(D_1\xi^{A}_{x}(r)+D_2\xi^{B}_{x}(r))\big]^2\Bigg]\\
&\lesssim   (t-s) \|\nabla f\|^2_2\,.
\end{aligned}
\end{equation}

To summarise, each of the terms $\mcb I^{N}(f)$, $\mcb B^{N}(f)$, $\mcb R^{N}(f)$ and $\mcb M^{N}(f)$ is tight, 
which implies tightness for $\mcb Z^{N}(f)$. Moreover,~\eqref{e:IHol},~\eqref{e:HolM}, and~\eqref{eq:HolB_q} 
give~\eqref{e:HolZ}, while~\eqref{e:supZ} follows by~\eqref{e:SupM} and the uniform (in $N$) continuity 
of $\mcb I^{N}(f)$, $\mcb B^{N}(f)$. 
\end{proof}

\begin{proof}[Proof of Lemma~\ref{lem:conv_QV}]
Note that
the expectation in the statement of the lemma is equal to
\begin{equation*}
	\begin{aligned}
		\mathbb E_{\nu_\rho}\bigg[\bigg(\int_0^t \dd s\frac1N\sum_x (\nabla_N T_{v N^2s}f(\tfrac xN))^2\Big[c_x(\eta)\Big\{\Big(D_1\xi^{A}_{x+1}+D_2\xi^{B}_{x+1})&-(D_1\xi^{A}_{x}+D_2\xi^{B}_{x})\Big\}^2\\
		&-\frac{4}{9}(D_1^2+D_2^2-D_1D_2)\Big]\bigg)^2\bigg].
	\end{aligned}
\end{equation*}
From the computations {in the proof of Lemma \ref{lemmaqv}}  we know that
\begin{equation}\label{eq:manyterms}
	\begin{aligned}
	c_x(\eta)\Big\{\Big(D_1\xi^{A}_{x+1}+D_2\xi^{B}_{x+1})-(D_1\xi^{A}_{x}+&D_2\xi^{B}_{x})\Big\}^2-\frac{4}{9}(D_1^2+D_2^2-D_1D_2)\\
	=&D_1^2\Big\{\sum_{\alpha}c^{\alpha, A}\xi^{\alpha}_{x}\xi^{A}_{x+1}+\sum_\beta c^{A, \beta}\xi^{A}_{x}\xi^{\beta}_{x+1}-\frac 49\Big\}\\
	&+
D_2^2\Big\{\sum_{\alpha}c^{\alpha, B}\xi^{\alpha}_{x}\xi^{B}_{x+1}+\sum_\beta c^{B,\beta}\xi^{B}_{x}\xi^{\beta}_{x+1}-\frac 49\Big\}\\&-2D_1D_2\Big\{c^{B,A}\xi^B_x\xi^A_{x+1}+c^{A,B}\xi_x^A\xi_{x+1}^B-\frac 29\Big\}.
\end{aligned}
\end{equation}
The proof of the lemma is complete once we show that the second moment of the time integral of each 
of the previous summands goes to $0$ as $N\to\infty$. As they can all be analysed similarly, we 
limit ourselves to treat the first. 
By the Cauchy-Schwarz inequality, we have
\begin{equation}\label{eq:exp_mart_conv_m}
	\begin{aligned}
		&\mathbb E_{\nu_\rho}\bigg[\bigg(\int_0^t \dd s\frac1N\sum_x (\nabla_N T_{v N^2s}f(\tfrac xN))^2\Big\{\sum_{\alpha}c^{\alpha, A}\xi^{\alpha}_{x}\xi^{A}_{x+1}+\sum_\beta c^{A, \beta}\xi^{A}_{x}\xi^{\beta}_{x+1}-\frac 49\Big\}\bigg)^2\bigg]\\
		&\leq\frac{t^2}{N^2}\sum_{x,y}\nabla_N f(\tfrac xN)^2\nabla_N f(\tfrac yN)^2\times\\
		&\qquad\times
		\mathbb E_{\nu_\rho}\bigg[\Big\{\sum_{\alpha}c^{\alpha, A}\xi^{\alpha}_{x}\xi^{A}_{x+1}+\sum_\beta c^{A, \beta}\xi^{A}_{x}\xi^{\beta}_{x+1}-\frac 49\Big\}\Big\{\sum_{\alpha}c^{\alpha, A}\xi^{\alpha}_{y}\xi^{A}_{y+1}+\sum_\beta c^{A, \beta}\xi^{A}_{y}\xi^{\beta}_{y+1}-\frac 49\Big\}\bigg]\,.
	\end{aligned}
\end{equation}
Note now that for $|x-y|>1$ the variables $\xi^{\alpha}_{x}\xi^{\beta}_{x+1}$ and 
$\xi^{\alpha'}_{y}\xi^{\beta'}_{y+1}$ are independent for any value 
of $\alpha,\alpha',\beta,\beta'\in\{A,B\}$. Since they are also centred, the expectation 
at the right hand side is $0$. 
%
When instead $x\in\{y,y+1\}$ the expectation can be easily seen to be bounded 
so that overall \eqref{eq:exp_mart_conv_m} is of order $N^{-1}$ and therefore vanishes in the limit. 
%
%
%
\end{proof}

\subsection{Characterisation of the limit points}\label{sec:char_limit}

From the  results in the previous section, 
we know that the sequences $\{ \mcb{Z}^{N,\pm} \}_{ N \in \mathbb{N} } $, $\{ \mcb{I}^{N,\pm} \}_{ N \in \mathbb{N} } $, 
$\{ \mcb {B}^{N,\pm} \}_{ N \in \mathbb{N} } $, $\{ \mcb {M}^{N,\pm} \}_{ N \in \mathbb{N} } $ and 
$\{ \mcb{R}^{N,\pm} \}_{ N \in \mathbb{N} } $, the latter vanishing,  
are tight with respect to the uniform topology on $ D([0,T]; \mcb D'(\mathbb{T}))$.
Prohorov's theorem ensures the existence of a converging subsequence that, abusing notation, we will still denote by $N$. 
Let $\mcb{Z}^\pm$, $\mcb{I}^{\pm}$, $\mcb {B}^{\pm} $ and $ \mcb{M}^{\pm}$ be 
their almost surely continuous limits.  

We are left to show that $\mcb{Z}^\pm$ are either solutions to the Ornstein--Uhlenbeck equation or 
stationary energy solutions of the stochastic Burgers  equation in the sense of Definitions~\ref{def:OU} and \ref{def:energy_solution}. Once this is proven, since any of these solutions is unique by Theorem~\ref{thm:Uniqueness}, 
the convergence of the whole sequence follows. 
\medskip

We begin with item (i) in Definition \ref{def:energy_solution}. 
Note that, by computing the characteristic function at any time $t$ of the field $\mcb Z_t^{N,\pm}$ 
we obtain that $\mcb Z_t^{N,\pm}$ converges in distribution to a spatial white noise of variance  
\begin{equation}
\textrm{Var}(D_1\xi_x^A+D_2\xi_x^{B})=\frac 29(D_1^2+D_2^2-D_1D_2)\,,
\end{equation}
where the previous equality can be verified by arguing as in the proof of Lemma~\ref{lemmaqv}. 
{Above for a random variable $X$, we denoted by $\textrm{Var}(X)$ its variance with respect to $\nu_\rho$.}

Now, we turn to items (ii)-(iii). 
The initial field $\{ \mcb Z^{N,\pm}_0\}_{N\in\mathbb N}$ can be treated as in the previous point. 
The martingale terms $\{ \mcb{M}^{N,\pm}\}_{ N \in \mathbb{N} }$ were thoroughly analysed in the 
previous section, where it was shown that $\mcb{M}^{\pm} $ is a mean-zero Gaussian process 
with quadratic variation given by the right-hand side of~\eqref{eq:lim_QV}. 
For the linear term $\mcb I^\pm$, recall that
$$\mcb I_t^{N,\pm}(f)=\int_0^t \dd s \mcb Z_s^{N,\pm}(\Delta_Nf)\,.$$
Since $\{\mcb Z^{N,\pm}\}_N$ converges in the space $ C([0,T]; \mcb D'(\mathbb{T}))$, the limit 
must be such that for any $f\in \mcb D(\mathbb T)$ 
$$\mcb I_t^{\pm}(f)=\int_0^t \dd s \mcb Z_s^{\pm}(\Delta f)\,.$$
At this point, it remains to study the quadratic term $\mcb B^\pm$, which is the most delicate. 
Recall \eqref{eq:quadratic_term_gen}. 

In all the cases that we distinguished in Section~\ref{sec:martn}, 
from the second-order Boltzmann-Gibbs principles, Theorems \ref{thm:BG} and~\ref{thm:BG_v2}, 
\eqref{eq:quadratic_term_gen} vanishes in $L^2(\mathbb P_{\nu_\rho})$ if $\gamma>1/2$, 
while for $\gamma=1/2$ it can be written as the sum of terms that can be either of the form
 \begin{equation}\label{eq:quadraticKPZ}
	\begin{aligned}
		&\lambda_\pm\int_0^t \dd s \frac1N\sum_{x\in\mathbb{T}_N}\nabla_N f\left(\tfrac{x}{N}\right)
		\mcb Z_s^{N,\pm}({\iota^1_\varepsilon}(\tfrac xN))\mcb Z_s^{N,\pm}({\iota^2_\varepsilon}(\tfrac {x}N))
	\end{aligned}
\end{equation}
where $\lambda_\pm$ is a constant and $\iota^i_\varepsilon \in\{\ola{\iota_\varepsilon}, \ora{\iota_\varepsilon}\}$, $i=1,2$, 
or 
\begin{equation}\label{eq:crossVanish}
\mathfrak A^\pm\int_0^t \dd s  \frac1N\sum_{x\in\mathbb{T}_N}\nabla_N f\left(\tfrac{x}{N}\right)
		\mcb Z_s^{N,\pm}({\rho^1_\varepsilon}(\tfrac xN)) \mcb Z_s^{N,\mp}\Big({\rho^2_\varepsilon}\left(\tfrac{x+2v_\pm N^{2}s}{N}\right)\Big) 
\end{equation}
for some constants $\mathfrak A^\pm$, where 
$\rho^i_\varepsilon \in\{\ola{\rho_\varepsilon}, \ora{\rho_\varepsilon}\}$, $i=1,2$. 
To control this latter integral we { would like to} apply Theorem~\ref{thm:Cross} {but, 
while condition~\ref{itm:2} clearly holds with $\alpha=1/2$ as can be seen 
by a simple application of Cauchy-Schwarz inequality together with~\eqref{e:supZ} and~\eqref{e:HolZ}, 
condition~\ref{itm:1}. is violated for the points $k_1=k\in\Z\setminus\{0\}$ and $k_2=0$. 
Indeed, for these values of $k_1,k_2$ we have $k_1+k_2=k\neq 0$, 
$\widehat{\rho^1_\varepsilon}(k_1)\widehat{\rho^2_\varepsilon}(0)\neq 0$ but 
$v_1^Nk_1+v_2^N k_2= 0 \times k +2v_\pm N^{2} \times 0=0$. This is the place at which 
the linear transport term in~\eqref{eq:SBE_main} comes about. 
To see this, note first that, by definition, $\widehat{\rho^2_\varepsilon}(0)=\int \rho^2_\varepsilon(u)\dd u=1$ and 
\begin{equation}\label{eq:Mass}
\mcb Z_s^{N,\mp}(1)=\mcb Z_0^{N,\mp}(1)=\overline{\mcb Z}^{N,\mp}
\end{equation}
where we used that the total mass is conserved and denoted the time-independent quantity $\overline{\mcb Z}^{N,\mp}$.
Hence, we rewrite~\eqref{eq:crossVanish} as
\begin{equation}\label{eq:crossVanish2}
\begin{aligned}
\mathfrak A^\pm\int_0^t \dd s  \frac1N\sum_{x\in\mathbb{T}_N}\nabla_N &f\left(\tfrac{x}{N}\right)
		\mcb Z_s^{N,\pm}({\rho^1_\varepsilon}(\tfrac xN)) \mcb Z_s^{N,\mp}\Big({\rho^2_\varepsilon}\left(\tfrac{x+2v_\pm N^{2}s}{N}\right)-1\Big) \\
		&+\mathfrak A^\pm \overline{\mcb Z}^{N,\mp}\int_0^t \dd s  \frac1N\sum_{x\in\mathbb{T}_N}\nabla_N f\left(\tfrac{x}{N}\right)
		\mcb Z_s^{N,\pm}({\rho^1_\varepsilon}(\tfrac xN))\,. 
\end{aligned}
\end{equation}
We can now apply Theorem~\ref{thm:Cross} which ensures that, 
as $\varepsilon>0$ is fixed, the first summand vanishes in the limit $N\to\infty$. 
The second instead is the product of two terms which are tight (it is {\it not} the product of fields anymore!) 
and therefore it converges, in the double limit $N\to\infty$ and $\eps\to 0$, to 
\begin{equation}\label{e:LimitMass}
\mathfrak A^\pm \overline{\mcb Z}^{\mp}\int_0^t \dd s \mcb Z_s^{\pm}(\nabla f)\,.
\end{equation}}

We turn to~\eqref{eq:quadraticKPZ}, for which, by tightness of $\{\mcb B^{N,\pm}\}_{N\in\mathbb N}$, 
we get
 \begin{equation}\label{Beps}
	\begin{aligned}
		\mcb B^{\varepsilon,\pm}_t(f)=\lim_{N\to\infty} \int_0^t \dd s  \frac1N\sum_{x\in\mathbb{T}_N}\nabla_N f\left(\tfrac{x}{N}\right)
		\mcb Z_s^{N,\pm}({\iota^1_\varepsilon}(\tfrac xN))\mcb Z_s^{N,\pm}({\iota^2_\varepsilon}(\tfrac {x}N))\,.
	\end{aligned}
\end{equation}
The process at the left hand side coincides with that in \eqref{eq:q_end}. 
Indeed, even if neither $\overleftarrow{\iota_\varepsilon}$ nor $\overrightarrow{\iota_\varepsilon}$ 
belong to $\mcb D(\mathbb T)$, they can be approximated by elements in the latter space 
(see also the footnote before~\eqref{eq:q_end}). 
For further details, we refer the reader to \cite[Section~5.3]{GJ14}.
Now, by computations similar to those performed in the proof of~\eqref{eq:HolB_q}, it is not hard to see that 
for any $0<\delta\leq\varepsilon<1$, we have 
\begin{equation*}
\mathbb{E}_n \bigg[\bigg|
(\mcb {B}^{N,\pm}_{t}(f) 
- \mcb{B}^{N,\pm}_{s}(f))
- (\mcb B^{\delta,\pm}_{t}(f)-\mcb B^{\delta,\pm}_{s}(f))\bigg|^2 \bigg] \lesssim \bigg((t-s)\varepsilon  
+ \frac{(t-s)^2}{\varepsilon^2 N}  \bigg)
\| \nabla f\|^2_{2} 
\end{equation*}
so that, by~\eqref{Beps} and taking the limit in $N$, we obtain
\begin{equation}
\label{eq:energy_condition_estimate}
\mathbb{E} \big[\big| 
(\mcb B^{\varepsilon,\pm}_{t}(f)-\mcb B^{\varepsilon,\pm}_{s}(f))
- (\mcb B^{\delta,\pm}_{t}(f)-\mcb B^{\delta,\pm}_{s}(f))
\big|^2 \big] 
\lesssim  \varepsilon(t-s)
\| \nabla f \|^2_{2} 
\end{equation}
which, on the one hand guarantees that $\mcb B^{\varepsilon,\pm}$ has unique limit in $\varepsilon\to 0$, and on the other 
provides the energy estimate~\eqref{eq:energy_estimate}. 
Hence, both (ii) and (iii) of Definition \ref{def:energy_solution} hold. 

At last, we note that all the arguments above hold {\it mutatis mutandis} for the reversed process 
$\{ \mcb {Z}^{N,\pm}_{T-t} : t \in [0,T]\}$ whose dynamics is generated by the adjoint operator $L_N^*$, 
so that also item (iv) is satisfied. 
%
}

At last, we prove that $\mcb M^+$ and $\mcb M^-$ are independent, which amounts to verify that 
for all $f,g\in\mcb D(\mathbb T)$, $(\mcb M^+_t(f))_t $ and $(\mcb M^-_t(g))_t$ are independent. 
This in turn follows once we show that the cross-variation $\langle \mcb M^+(f), \mcb M^-(g)\rangle_t=0.$
By polarization, for any $N\geq 1$, we have
\[
2\langle \mcb M^{N,+}(f), \mcb M^{N,-}(g)\rangle_t=\langle \mcb M^{N,+}(f)+ \mcb M^{N,-}(g)\rangle_t-\langle \mcb M^{N,+}(f)\rangle_t-\langle \mcb M^{N,-}(g)\rangle_t\,,
\]
and we will prove
$$\lim_{N\to+\infty}\mathbb E_{\nu_\rho}\Big[\Big(\langle \mcb M^{N,+}(f)+ \mcb M^{N,-}(g)\rangle_t-\langle \mcb M^{N,+}(f)\rangle_t-\langle \mcb M^{N,-}(g)\rangle_t\Big)^2\Big]=0,$$ 
which completes the proof.

A simple computation similar to that in \eqref{eq:QV} shows that
\begin{equation}\label{eq:QV_of_sum}
	\begin{aligned}
		\langle \mcb M^{N,+}(f)+ \mcb M^{N,-}(g)\rangle_t=  & N^{a-3}\int_0^t \sum_x c_x(\eta)(\nabla_N T_{v_+ N^2s}f(\tfrac xN))^2\big[\xi^{A}_{x+1}-\xi^{A}_{x}+D_2^+ (\xi^{B}_{x+1}-\xi^{B}_{x})\big]^2\dd s\\
		&+ N^{a-3}\int_0^t \sum_x c_x(\eta)(\nabla_N T_{v_- N^2s}g(\tfrac xN))^2\big[\xi^{A}_{x+1}-\xi^{A}_{x}+D_2^- (\xi^{B}_{x+1}-\xi^{B}_{x})\big]^2\dd s\\
		&+ N^{a-3}\int_0^t \sum_x c_x(\eta)\nabla_N T_{v_+ N^2s}f(\tfrac xN)\nabla_N T_{v_- N^2s}g(\tfrac xN)\times\\&\qquad\qquad\times 2\big[ \xi^{A}_{x+1}-\xi^{A}_{x}+D_2^+ (\xi^{B}_{x+1}-\xi^{B}_{x})\big]\big[ \xi^{A}_{x+1}-\xi^{A}_{x}+D_2^- (\xi^{B}_{x+1}-\xi^{B}_{x})\big]\dd s\,.
	\end{aligned}
\end{equation}
Now, the first two terms on the right-hand side equal $\langle \mcb M^{N,+}(f)\rangle_t$ and $\langle \mcb M^{N,-}(g)\rangle_t$, respectively, so that we only need to show that the variance of the last vanishes as $N\to+\infty$. 
To this end, we first note that its mean is zero since
\begin{equation}\label{eq:last}
	\begin{aligned}
	\mathbb E_{\nu_\rho}\Big[\big( \xi^{A}_{x+1}-\xi^{A}_{x}+D_2^+ (\xi^{B}_{x+1}-\xi^{B}_{x})\big)\big( \xi^{A}_{x+1}-\xi^{A}_{x}+D_2^- (\xi^{B}_{x+1}-\xi^{B}_{x})\big)\Big]=\frac{2}{9} (2-(D_2^++D_2^-)+2D_2^+D_2^-)=0
	\end{aligned}
\end{equation}
where the last equality holds provided $D_2^+, D_2^-$ are chosen as in cases \textbf{(I-III)}. 
Here, we are crucially relying on the choice of constants dictated by mode coupling theory. 
Turning to the second moment, we perform computations similar to those 
in the proof of Lemma \ref{lem:conv_QV}. By the Cauchy-Schwarz inequality, for $a=2$, we have
\begin{equation*}
	\begin{aligned}
&\mathbb E_{\nu_\rho}\Big[\Big(N^{a-3}\int_0^t \sum_x c_x(\eta)\nabla_N T_{v_+ N^2s}f(\tfrac xN)\nabla_N T_{v_- N^2s}g(\tfrac xN)\times\\
&\quad \quad \quad \quad \quad \quad \quad \quad\times\big[ \xi^{A}_{x+1}-\xi^{A}_{x}+D_2^+ (\xi^{B}_{x+1}-\xi^{B}_{x})\big]\big[ \xi^{A}_{x+1}-\xi^{A}_{x}+D_2^- (\xi^{B}_{x+1}-\xi^{B}_{x})\big]\dd s\Big)^2\Big]\\
&\leq \frac{t}{N^2}\int_0^t\dd s \sum_{x,y} \nabla_N T_{v_+ N^2s}f(\tfrac xN)\nabla_N T_{v_- N^2s}g(\tfrac xN)\nabla_N T_{v_+ N^2s}f(\tfrac yN)\nabla_N T_{v_- N^2s}g(\tfrac yN)\times\\
&\quad \quad \quad \quad\quad \quad \quad \quad\times\mathbb E_{\nu_\rho}\Big[c_x(\eta)c_y(\eta)\big( \xi^{A}_{x+1}-\xi^{A}_{x}+D_2^+ (\xi^{B}_{x+1}-\xi^{B}_{x})\big)\big( \xi^{A}_{x+1}-\xi^{A}_{x}+D_2^- (\xi^{B}_{x+1}-\xi^{B}_{x})\big)\times\\
&\quad \quad \quad \quad\quad \quad \quad \quad\quad\quad\times\big( \xi^{A}_{y+1}-\xi^{A}_{y}+D_2^+ (\xi^{B}_{y+1}-\xi^{B}_{y})\big)\big( \xi^{A}_{y+1}-\xi^{A}_{y}+D_2^- (\xi^{B}_{y+1}-\xi^{B}_{y})\big)\Big]\\
	\end{aligned}
\end{equation*}
and, arguing as in~\eqref{eq:manyterms}-\eqref{eq:exp_mart_conv_m}, since the variables are independent and centred, 
the terms in the last sum that survive are only those for which $|x-y|\leq 1$. Therefore, 
the last term vanishes as $N\to+\infty$.

Finally we prove that for every $t\in[0,T]$ the processes $\mcb Z^{+}_t$ and $\mcb Z^{-}_t$  are uncorrelated. To this end we note that for any $N$ and for any $f,g\in\mcb D(\mathbb T)$, we have that  $\mcb Z^{N,+}_t(f)$ and $\mcb Z^{N,-}_t(g)$  are uncorrelated i.e.  
\begin{equation}\label{eq:unc}
\mathbb E_{\nu_\rho}[\mcb Z^{N,+}_t(f)\mcb Z^{N,-}_t(g)]=0,
\end{equation}
and this is a consequence of the fact that for any $x,y\in\mathbb T_N$
\begin{equation}
\mathbb E_{\nu_\rho}[(\bar\xi^A_x+D_2^+\bar\xi^B_x)(\bar\xi^A_y+D_2^-\bar\xi^B_y)]=0
\end{equation}
for any choice of $D_2^\pm$ in all the cases \textbf{(I-III)}.  This is an easy computation that is similar to the one in \eqref{eq:last}. Again we note that in {the} last result it is crucial to use the choice of the constants fixing the fields that we obtained from mode coupling theory.  
With this the proof of Theorem~\ref{thm:main_new} is completed.

 
%
%

\section{Crossed fields}\label{sec:cross}

The goal of this section is to show that those integrals displaying the product of fluctuation fields evolving on 
different time frames, vanish in the large $N$ limit. We state the main result in wider generality as we believe 
its scope goes beyond the model treated in the present work, so that the theorem below might be of independent interest. 

\begin{thm}\label{thm:Cross}
Let $T>0$, $n\in\N$ and, for every $N\in\N$, let $\{\mcb X^{i,N}\colon i=1,\dots,n\}$ 
be $n$ time-dependent random fields taking values in $D([0,T];\mcb D'(\T))$ {and defined} on the same probability space.
Let $\{v_i^N\}_N$, $i=1,\dots,n$ be $n$ sequences of constants 
{ and $\varphi_i$, $i=1,\dots,n$, be smooth functions on $\T$.}

We make the following assumptions 
\begin{enumerate}[noitemsep]
\item\label{itm:1} the constants $\{v_i^N\}_N$ {and the functions $\varphi_i$ are such that if for $k_1,\dots,k_n\in\Z$ with  
$\sum_{i=1}^n k_i\neq 0$, $\Pi_i\hat\varphi_i(k_i)\neq 0$, then}
\begin{equation}\label{e:seqConst}
\lim_{N\to\infty} \frac1N\left|\sum_{i=1}^n k_i v_i^N\right|=\infty\,,
\end{equation}
\item\label{itm:2} there exist $\alpha\in(0,1)$, and a constant $C>0$ such that 
for any $f_1,\dots,f_n\in \mcb D(\T)$, the following bounds hold uniformly in $N$
\begin{align}
&\E\bigg[\sup_{s\leq T} |\mcb P^N_s|\bigg]\leq C \prod_{i=1}^n\|\Delta f_i\|_2 \vee\|f_i\|_\infty\,,\label{e:supP}\\
&\E\Big[|\mcb P^N_t-\mcb P^N_s|\Big]\leq C (t-s)^{\alpha} \prod_{i=1}^n\|\Delta f_i\|_2\,,\qquad\text{for all $0\leq s\leq t\leq T$,}\label{e:HolP}
\end{align}
where, for $s\in[0,T]$, we denoted by $\mcb P^N_s:=\prod_{i=1}^n\mcb X^{i,N}_s(f_i)$. 
\end{enumerate}

Then, for any $t\in[0,T]$ and $f\in \mcb D(\T)$, we have  
\begin{equation}\label{e:CrossLim}
\lim_{N\to\infty}\E\left[\left|\int_0^t\dd s\frac 1N\sum_{z\in\mathbb{T}_N}\nabla_Nf\left(\tfrac{z}{N}\right)\prod_{i=1}^n\mcb X^{i,N}_s \left(\varphi_i\left(\tfrac{z+v_i^N s}{N}\right)\right)\right|\right]=0\,. 
\end{equation}
\end{thm}

The proof of the previous result relies on a version of {the} Riemann-Lebesgue lemma which is stated in Appendix~\ref{a:RL}. 
To be able to use it, we need to introduce some notation. 
Let $\phi$ be a real-valued function on $\T$. We define the {\it discrete Fourier transform} of $\phi$ as 
\begin{equation}\label{e:DFT}
	\cF^N\phi(k):=\frac{1}{N}\sum_{x\in\T_N}\phi(\tfrac xN)e_{-k}(\tfrac xN)\,,\qquad \text{for $k\in\N$, }
\end{equation}
where $e_k(x):=e^{2\pi\iota k x}$, for $x\in\T$, and $\iota:=\sqrt{-1}$, the complex unit. 
The usual properties of the Fourier transform hold in the discrete context as well. For the reader's convenience, 
we recall them here without proof. The following inversion holds
\begin{equation}\label{e:InverseDFT}
	\phi(\tfrac xN):=\sum_{k=0}^{N-1}\cF^N\phi(k)e_{k}(\tfrac xN)\,,\qquad \text{for $x\in\T_N$, }
\end{equation}
For any $a\in \R$ the discrete Fourier transform of $\phi(a)(\cdot):=\phi(\cdot-a)$ is given by 
		\begin{equation}\label{e:Translate}
			\cF^N[\phi(a)](k)=\cF^N\phi(k) e_{-k}\left(\frac{\lfloor aN\rfloor}{N}\right)\,,\qquad \text{for all $k\in\N$,}
		\end{equation}
		where $\lfloor a\rfloor$ denotes the integer part of $a$. 
Plancherel's identity holds, i.e. 
		\begin{equation}\label{e:Plancherel}
			\frac{1}{N}\sum_{x\in\T_N}\phi(\tfrac xN)\psi(\tfrac xN)=\sum_{k=0}^{N-1}\cF^N\phi(k)\cF^N\psi(-k)\,.
		\end{equation}
If $\phi\in L^2(\T)$, then for every $k\in\N$
		\begin{equation}\label{e:ConvDFTft}
			\lim_{N\to\infty} \cF^N\phi(k)=\hat\phi(k):=\int_\T \phi(x) e_{-k}(x)\dd x\,.
		\end{equation}
We are now ready to prove the result. 

\begin{proof}[Proof of Theorem~\ref{thm:Cross}]
At first, we want to write the integrand in~\eqref{e:CrossLim}, in terms of its discrete Fourier transform. 
To do so, notice that, for $i=1,\dots,n$ and any $w\in\T_N$, applying first~\eqref{e:InverseDFT} 
and then~\eqref{e:Translate} 
to $\varphi_i(\tfrac{w}{N})(\cdot)$, we get 
\begin{equation*}
{\mcb X}^{i,N}_s\Big(\varphi_i(\tfrac wN)\Big)=\sum_{k=0}^{N-1}{\mcb X}^{i,N}_s(e_{-k}) \mcb F^N[ \varphi_i(\tfrac wN)](-k)=\sum_{k=0}^{N-1}{\mcb X}^{i,N}_s(e_{-k}) \mcb F^N \varphi_i^\eps(-k)e_k(\tfrac wN)\,. 
\end{equation*} 
Hence, using the above and~\eqref{e:Plancherel}, 
we see that the integrand in~\eqref{e:CrossLim} satisfies
\begin{align*}
\frac 1N\sum_{z\in\mathbb{T}_N}&\nabla_Nf\left(\tfrac{z}{N}\right)\prod_{i=1}^n\mcb X^{i,N}_s \left(\varphi_i\left(\tfrac{z+v_i^N s}{N}\right)\right)\\
&=\sum_{k_1,\dots,k_n=0}^{N-1}\mcb F^N(\nabla_Nf)(-k_{[1:n]})\bigg(\prod_{i=1}^n \cF^N \varphi_i(-k_i){\mcb X}^{i,N}_s(e_{-k_i})\bigg) e^{\iota 2\pi \sum_{i=1}^n k_i \frac{\lfloor v_i^N s\rfloor}{N}}\\
&=\sum_{k_1,\dots,k_n=0}^{N-1}F^N(k_1,\dots,k_n,k_{[1:n]})  \bigg(\prod_{i=1}^n {\mcb X}^{i,N}_s(e_{-k_i})\bigg) e^{\iota 2\pi \sum_{i=1}^n k_i \frac{\lfloor v_i^N s\rfloor}{N}}\,,
\end{align*}
where we introduced the shorthand notation $k_{[1:n]}:=\sum_{i=1}^n k_i$ and defined $F^N$ according to
\begin{equation}\label{e:FN}
F^N(k_1,\dots,k_n,k_{[1:n]}):=\mcb F^N(\nabla_Nf)(-k_{[1:n]})\bigg(\prod_{i=1}^n \cF^N \varphi_i(-k_i)\bigg).
\end{equation}
Now, the term we need to control is given by 
\begin{align}
\E&\left[\left|\int_0^t\dd s\frac 1N\sum_{z\in\mathbb{T}_N}\nabla_Nf\left(\tfrac{z}{N}\right)\prod_{i=1}^n\mcb X^{i,N}_s \left(\varphi_i\left(\tfrac{z+v_i^N s}{N}\right)\right)\right|\right]\nonumber\\
&=\E\bigg[\bigg|\int_0^t\dd s \sum_{k_1,\dots,k_n=0}^{N-1}F^N(k_1,\dots,k_n,k_{[1:n]})  \bigg(\prod_{i=1}^n {\mcb X}^{i,N}_s(e_{-k_i})\bigg) e^{\iota 2\pi \sum_{i=1}^n k_i \frac{\lfloor v_i^N s\rfloor}{N}}\bigg|\bigg]\nonumber\\
&= \E\bigg[\bigg| \sum_{k_1,\dots,k_n=0}^{N-1}F^N(k_1,\dots,k_n,k_{[1:n]}) \int_0^t \bigg(\prod_{i=1}^n {\mcb X}^{i,N}_s(e_{-k_i})\bigg) e^{\iota 2\pi \sum_{i=1}^n k_i \frac{\lfloor v_i^N s\rfloor}{N}}\dd s\bigg|\bigg]\nonumber\\
&\leq \sum_{k_1,\dots,k_n=0}^{\infty}|F^N(k_1,\dots,k_n,k_{[1:n]}) |\E\bigg[\bigg| \int_0^t \bigg(\prod_{i=1}^n {\mcb X}^{i,N}_s(e_{-k_i})\bigg) e^{\iota 2\pi \sum_{i=1}^n k_i \frac{\lfloor v_i^N s\rfloor}{N}}\dd s\bigg|\bigg]\,.\label{e:sum}
\end{align}
At this point, we want to take the limit in $N$ at both sides of the previous expression. 
Note first that by~\eqref{e:supP} and in view of~\eqref{e:ConvDFTft}, we have the bound
\begin{align}
|F^N(k_1,\dots,k_n,k_{[1:n]})& |\E\bigg[\bigg| \int_0^t \bigg(\prod_{i=1}^n {\mcb X}^{i,N}_s(e_{-k_i})\bigg) e^{\iota 2\pi \sum_{i=1}^n k_i \frac{\lfloor v_i^N s\rfloor}{N}}\dd s\bigg|\nonumber\\
&\lesssim t |k_{[1:n]}||\hat f(k_{[1:n]})\bigg(\prod_{i=1}^n|\hat\varphi_i(k_i)|\bigg)\E\bigg[\sup_{s\leq T}\bigg| \prod_{i=1}^n {\mcb X}^{i,N}_s(e_{-k_i})\bigg|\bigg]\nonumber\\
&\lesssim t |k_{[1:n]}||\hat f(k_{[1:n]})|\prod_{i=1}^n|\hat\varphi_i(k_i)||k_i|^4\,.\label{e:Summable}
\end{align}
Now, the functions $\varphi_i$, $i=1,\dots,n$ are infinitely differentiable (and compactly supported, as they live on $\T$), 
which implies that their Fourier transform decays faster than any polynomial. 
Hence,~\eqref{e:Summable} is summable in $k_1,\dots, k_n$. 
Furthermore, invoking~\eqref{e:ConvDFTft} once more, it is immediate to see that 
the limit in $N$ of $F^N$ is given by  {
\begin{equation}\label{e:F}
F(k_1,\dots,k_n,k_{[1:n]}):=-2\pi\iota k_{[1:n]} \hat f(-k_{[1:n]})\bigg(\prod_{i=1}^n \hat\varphi_i(-k_i)\bigg)
\end{equation} 
which is nothing but the continuum counterpart of~\eqref{e:FN}}. 
By the dominated convergence theorem, we obtain 
\begin{align}
\lim_{N\to+\infty}&\sum_{k_1,\dots,k_n=0}^{\infty}|F^N(k_1,\dots,k_n,k_{[1:n]}) |\E\bigg[\bigg| \int_0^t \bigg(\prod_{i=1}^n {\mcb X}^{i,N}_s(e_{-k_i})\bigg) e^{\iota 2\pi \sum_{i=1}^n k_i \frac{\lfloor v_i^N s\rfloor}{N}}\dd s\bigg|\bigg]\nonumber\\
&=\sum_{k_1,\dots,k_n=0}^{\infty}|F(k_1,\dots,k_n,k_{[1:n]})| \lim_N\E\bigg[\bigg| \int_0^t \bigg(\prod_{i=1}^n {\mcb X}^{i,N}_s(e_{-k_i})\bigg) e^{\iota 2\pi \sum_{i=1}^n k_i \frac{\lfloor v_i^N s\rfloor}{N}}\dd s\bigg|\bigg]\,.\nonumber
\end{align}
{Now, if $k_1,\dots,k_n$ are such that $\sum_{i=1}^n k_i\neq 0$ or 
$\Pi_i\hat\varphi_i(k_i)\neq 0$, then the corresponding summand is $0$ by~\eqref{e:F}. 
In all other cases,~\eqref{e:seqConst} holds and this together with the fact that  
the processes $\{{\mcb X}^{i,N}(e_{-k_i})\colon i=1,\dots,n\}$ satisfy condition~\ref{itm:2}, 
imply that Proposition~\ref{p:RL} is applicable and therefore the inner limit is $0$, thus the proof is concluded.} 
\end{proof}

\section{Second order Boltzmann--Gibbs principle}\label{sec:BG}

In this section we present two formulations of the second order order Boltzmann--Gibbs principle for degree-two terms of type $\bar\xi^\alpha_x\bar\xi^{\beta}_{x+1}$, involving one or two species, i.e. for $\alpha, \beta\in\{A,B,C\}$. 

\subsection{A second order Boltzmann--Gibbs principle I}

The version of the Boltzmann--Gibbs principle here proposed is a multi-species generalization of Theorem 1 in~\cite{GJS17}.  The goal is to replace terms of the form $\bar\xi^\alpha_x\bar\xi^{\beta}_{x+1}$ by their respective centred averages over boxes of microscopically big (and macroscopically small) size. For this reason we introduce the left and right averages over boxes of size $\ell$ to be chosen appropriately. For $\ell\in\N$ and $x\in\Z$ we introduce the averages on boxes of size $\ell$, one to the right of $x$, one to the left:
\begin{equation}\label{e:localave}
	\ora{\xi}^{\alpha,\ell}_x=\frac{1}{\ell}\sum_{y=x+1}^{x+\ell}\xi^\alpha_y,\qquad \ola{\xi}^{\alpha,\ell}_x=\frac{1}{\ell}\sum_{y=x-\ell}^{x-1}\xi^\alpha_y.
\end{equation}
\begin{rem}
	This version of the principle is used in the analysis of the quadratic terms for the KPZ fields in cases \textbf{(I)} and \textbf{(II)}.
\end{rem}
We first observe that our dynamics satisfies Assumptions 2.1, 2.2 and 2.3 of~\cite{GJS17}. In fact, the invariant measure $\nu_{\rho}$ is of product form and it is such that $\int_{\Omega_N}\xi^{\alpha}_x(\eta)\nu_{\rho}(\dd\eta)=\rho_\alpha$ for each $x$ and each $\alpha$. \\
The associated Dirichlet form, defined on local functions $f\in L^2(\nu_{\rho})$, is given by
\begin{equation}\label{eq:DF}
D_N(f)=-\int_{\Omega_N}f(\eta)L_Nf(\eta)\nu_\rho(\dd\eta),
\end{equation}
and, using the fact that $c^{\alpha,\beta}_x(\eta^{x,x+1})=c^{\alpha,\beta}_x(\eta)\frac{\nu_{\rho}(\eta^{x,x+1})}{\nu_\rho(\eta)}$, it can be decomposed as
\begin{equation}\label{eq:DF2}
D_N(f)=\sum_{x\in\mathbb{T}_N}I_{x,x+1}^N(f),
\end{equation}
where
\begin{equation}\label{eq:DFdec}
I^N_{x,x+1}(f)=\frac{N^2}{2}\int_{\Omega_N}c_x^{\alpha,\beta}(\eta)(f(\eta^{x,x+1})-f(\eta))^2\nu_{\rho}(\dd\eta).
\end{equation}
And finally we recall that the instantaneous current is given by~\eqref{eq:inst_current}. 

Now we can state the theorem. For a function $v\in\ell^2(\Z)$ denote
\begin{equation*}
\|v\|^2_{2,N}:=\frac 1N\sum_{x\in\Z}v(x)^2<\infty.
\end{equation*}
For a function $\phi:\Omega_N\to\R$, let $\|\phi\|^2_{L^2_{\nu_\rho}}$ be the $L^2(\nu_{\rho})$-norm
\begin{equation*}
\|\phi\|^2_{L^2_{\nu_\rho}}=\int_{\Omega_N}\phi(\eta)^2\nu_{\rho}(\dd\eta).
\end{equation*}

\begin{thm}\label{thm:BG}
	Let $\alpha,\beta\in\{A,B\}$. Uniformly over $L\in\N$, $t>0$, and function $v\in\ell^2(\Z)$, we have
	\begin{multline}\label{eq:BG}
	\E_{\nu_\rho}\Bigg[\Bigg(\int_0^t\sum_{x\in\Z}v(x)\Big\{\bar\xi^{\alpha}_{x}(s)\bar\xi^{\beta}_{x+1}(s)-\ora{\xi}^{\beta,L}_x(s)\big[\ola{\xi}^{\alpha,L}_x(s)(1-\delta_{\alpha\beta})+\ora{\xi}^{\alpha,L}_x\delta_{\alpha\beta}\big]+\frac{\chi_\alpha}{L}\Big\}\dd s\Bigg)^2\Bigg]\\
	\lesssim t\left[\frac{L}{N^{a-1}}+\frac{tN}{L^2}\right]\|v\|^2_{2,N},
	\end{multline}
where $\chi_\alpha=\delta_{\alpha,\beta}\E_{\nu_{\rho}}[(\xi^\alpha_x-\xi^{\alpha}_{x+1})^2]$.
\end{thm}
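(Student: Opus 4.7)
The proof plan is to adapt the multi-scale strategy of~\cite{GJS17}, which reduces the replacement of pointwise degree-two terms by block averages to a collection of Kipnis--Varadhan estimates at dyadically increasing scales. The integrand is of the form $\sum_x v(x) W_x(\eta_s)$ with
\begin{equation*}
W_x(\eta)=\bar\xi^{\alpha}_{x}\bar\xi^{\beta}_{x+1}-\ora{\xi}^{\beta,L}_x\big[\ola{\xi}^{\alpha,L}_x(1-\delta_{\alpha\beta})+\ora{\xi}^{\alpha,L}_x\delta_{\alpha\beta}\big]+\frac{\chi_\alpha}{L}\,,
\end{equation*}
and the starting point is the Kipnis--Varadhan inequality, which in our setting reads
\begin{equation*}
\E_{\nu_\rho}\Bigg[\Bigg(\int_0^t\sum_{x}v(x)W_x(\eta_s)\dd s\Bigg)^2\Bigg]\lesssim t\,\Big\|\sum_x v(x) W_x\Big\|_{-1,N}^2\,,
\end{equation*}
where the $H^{-1}$-norm is defined variationally through the Dirichlet form~\eqref{eq:DF}-\eqref{eq:DFdec}.

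The next step is a telescoping/doubling procedure. Fixing $K$ with $2^K\sim L$, I would write the difference of interest as a telescopic sum
\begin{equation*}
\bar\xi^{\alpha}_{x}\bar\xi^{\beta}_{x+1}-\ora{\xi}^{\beta,L}_x\big[\ola{\xi}^{\alpha,L}_x(1-\delta_{\alpha\beta})+\ora{\xi}^{\alpha,L}_x\delta_{\alpha\beta}\big]=\Big(\bar\xi^{\alpha}_{x}\bar\xi^{\beta}_{x+1}-\Psi_x^1\Big)+\sum_{k=0}^{K-1}\big(\Psi_x^{2^k}-\Psi_x^{2^{k+1}}\big),
\end{equation*}
where $\Psi_x^{\ell}$ denotes the product of the left and right averages at scale $\ell$ (with the obvious convention in the diagonal case). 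For each piece, I would bound the associated $H^{-1}$-norm by exploiting the fact that differences of averages on neighbouring dyadic scales can be written as telescopic sums over single exchanges, and each single exchange $\eta\mapsto\eta^{x,x+1}$ appears with a positive rate in $c_x(\eta)$. Combined with the $\ell^{-2}$ spectral gap for the multi-species exchange dynamics restricted to boxes of size $\ell$ (obtained by comparison with the symmetric simple exclusion process, whose spectral gap is classical), this yields a contribution of order $t\, 2^k/N^{a-1}\,\|v\|^2_{2,N}$ at scale $2^k$. Summing the geometric series gives the first term $t\,L/N^{a-1}\,\|v\|^2_{2,N}$ in~\eqref{eq:BG}. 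The remaining term, corresponding to the replacement of $\Psi_x^{L}$ by its mean (minus the $\chi_\alpha/L$ correction), is estimated directly from its variance under $\nu_\rho$: Cauchy--Schwarz in time together with the fact that $\Psi_y^L-\E_{\nu_\rho}[\Psi_y^L]+\chi_\alpha/L$ has covariance of order $L^{-2}$ and range of correlations $L$ produces the $t^2 N/L^2\,\|v\|^2_{2,N}$ contribution.

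The main obstacle is that, unlike the one-species setting, the dynamics for each species is not an autonomous exclusion process, so care is needed both in the spectral gap estimate on boxes and in the identification of the centering. The product form of $\nu_\rho$ and the reversibility ensured by~\eqref{ivariancebytransp} allow one to reduce to the exclusion-type analysis species by species: for the diagonal case $\alpha=\beta$, the correction $\chi_\alpha/L$ arises exactly as in~\cite{GJS17} from the self-variance of the single-species average, while for the off-diagonal case $\alpha\neq\beta$ the product $\ora{\xi}^{\beta,L}_x\ola{\xi}^{\alpha,L}_x$ already has the correct mean $\rho_\alpha\rho_\beta=\E_{\nu_\rho}[\bar\xi^\alpha_x\bar\xi^\beta_{x+1}]+\rho_\alpha\rho_\beta$ up to a $O(L^{-1})$ boundary term, which is absorbed by the $\chi_\alpha/L$ and the subsequent variance estimate. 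The only genuinely new input is therefore the multi-species spectral gap, which follows by the standard Quastel--Yau/Lu--Yau comparison technique from the underlying SSEP bounds.
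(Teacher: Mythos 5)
Your proposal follows essentially the same route as the paper: both rest on the Kipnis--Varadhan inequality, a decomposition that first passes from the pointwise product to averages on small boxes and then dyadically doubles up to scale $L$, and a final variance estimate for the fully averaged term (with the $\chi_\alpha/L$ correction only in the diagonal case). The paper presents this by an explicit four-term (resp.\ six-term) decomposition and appeals directly to Propositions 5, 7, 8, 9 of~\cite{GJS17}, whereas you describe the internals of those propositions (single-exchange reductions, spectral gap by comparison to SSEP, geometric summation over dyadic scales); the content is the same.
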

\begin{proof}
Let's start with the case $\alpha\neq\beta$. Let $\ell_0\in\mathbb N$. We can split the integrand in~\eqref{eq:BG} in the following terms.
	\begin{align}
	\bar\xi^{\alpha}_{x}\bar\xi^{\beta}_{x+1}-\ola{\xi}^{\alpha,L}_x\ora{\xi}^{\beta,L}_x=&\bar\xi^{\alpha}_{x}(\bar\xi^{\beta}_{x+1}-\ora{\xi}^{\beta,\ell_0}_x)\label{eq:BG1}\\
	&+\ora{\xi}^{\beta,\ell_0}_x(\bar\xi^{\alpha}_{x}-\ola{\xi}^{\alpha,\ell_0}_x) \label{eq:BG2}\\
	&+\ola{\xi}^{\alpha,\ell_0}_x(\ora{\xi}^{\beta,\ell_0}_x-\ora{\xi}^{\beta,L}_x)\label{eq:BG3}\\
	&+\ora{\xi}^{\beta,L}_x(\ola{\xi}^{\alpha,\ell_0}_x-\ola{\xi}^{\alpha,L}_x) \label{eq:BG4}
	\end{align}
To obtain a bound on~\eqref{eq:BG1} and \eqref{eq:BG2} we apply the ``One-block estimate'', Proposition 5 of~\cite{GJS17}. For~\eqref{eq:BG3} and~\eqref{eq:BG4} we need a ``multi-scale argument'', shown in Proposition 8 of~\cite{GJS17}.
	
Applying Proposition 5 of~\cite{GJS17} to~\eqref{eq:BG1} with $\phi(\tau_x\xi^\alpha_0)=\bar\xi^\alpha_x$, we obtain
\begin{equation}\label{eq:BG7}
\E_{\mu_N}\Bigg[\Bigg(\int_0^t\sum_{x\in\Z}v(x)\Big\{\bar\xi^{\alpha}_{x}(\bar{\xi}^{\beta}_{x+1}-\ora{\xi}^{\beta,\ell_0}_x)\Big\}\dd s\Bigg)^2\Bigg]\leq Ct\frac{\ell_0^2}{N^{a-1}}\|v\|^2_{2,N},
\end{equation}
and we do the same for~\eqref{eq:BG2} taking $\psi(\tau_x\xi^{\beta}_0)=\ora\xi^{\beta,\ell_0}_x$:
{
\begin{equation}\label{eq:BG8}
\E_{\mu_N}\Bigg[\Bigg(\int_0^t\sum_{x\in\Z}v(x)\Big\{\ora\xi^{\beta,\ell_0}_{x}(\bar{\xi}^{\alpha}_{x}-\ola{\xi}^{\alpha,\ell_0}_x)\Big\}\dd s\Bigg)^2\Bigg]\leq Ct\frac{\ell_0^2}{N^{a-1}}\|v\|^2_{2,N}.
\end{equation}
For~\eqref{eq:BG3} and~\eqref{eq:BG4} we apply Proposition 8 of~\cite{GJS17}  respectively:}
\begin{align*}
&\E_{\mu_N}\Bigg[\Bigg(\int_0^t\sum_{x\in\Z}v(x)\Big\{\ola{\xi}^{\alpha,\ell_0}_x(\ora{\xi}^{\beta,\ell_0}_x-\ora{\xi}^{\beta,L}_x)\Big\}\dd s\Bigg)^2\Bigg]\leq C{t}\frac{1}{N^{a-1}}\left(L+\frac{\ell^2_0}{L}\right)\|v\|^2_{2,N}\\
&\E_{\mu_N}\Bigg[\Bigg(\int_0^t\sum_{x\in\Z}v(x)\Big\{\ora\xi^{\beta,L}_{x}(\bar{\xi}^{\alpha}_{x}-\ola{\xi}^{\alpha,\ell_0}_x)\Big\}\dd s\Bigg)^2\Bigg]\leq C{t}\frac{1}{N^{a-1}}\left(L+\frac{\ell^2_0}{L}\right)\|v\|^2_{2,N}
\end{align*}

We conclude the proof for $\alpha\neq\beta$ putting together all the aforementioned estimates.

When $\alpha=\beta$, we have more terms in the initial decomposition. In fact,
 \begin{align}
 \bar\xi^\alpha_{x}\bar\xi^\alpha_{x+1}-(\ora{\xi}_x^{\alpha,L})^2+\frac{\chi_\alpha}{L}
 =& \bar\xi^\alpha_{x}(\bar\xi^\alpha_{x+1}-\ora{\xi}^{\alpha,\ell_0}_x)\label{eq:BG1b}\\
 &+\ora{\xi}^{\alpha,\ell_0}_x(\bar\xi^\alpha_{x}-\ola{\xi}^{\alpha,\ell_0}_x) \label{eq:BG2b}\\
 &+\ola{\xi}^{\alpha,\ell_0}_x(\ora{\xi}^{\alpha,\ell_0}_x-\ora{\xi}^{\alpha,L}_x)\label{eq:BG3b}\\
 &+\ora{\xi}^{\alpha,L}_x(\ola{\xi}^{\alpha,\ell_0}_x-\bar{\xi}^\alpha_x) \label{eq:BG4b}\\
 &+\bar{\xi}^\alpha_x\ora{\xi}^{\alpha,L}_x-(\ora{\xi}^{\alpha,L}_x)^2+\frac{(\bar\xi^\alpha_{x}-\bar\xi^\alpha_{x+1})^2}{2L} \label{eq:BG5b}\\
 &-\frac{(\bar\xi^\alpha_{x}-\bar\xi^\alpha_{x+1})^2}{2L}+\frac{\chi_\alpha}{L}. \label{eq:BG6b}
 \end{align}
 The first four terms are analogous to the terms we saw above: to estimate~\eqref{eq:BG1b},~\eqref{eq:BG2b} and~\eqref{eq:BG4b} we use Proposition 5 of~\cite{GJS17}; for~\eqref{eq:BG3b} we apply the multiscale argument, Proposition 8 of~\cite{GJS17}. For the last two terms,~\eqref{eq:BG5b} and~\eqref{eq:BG6b} we need different arguments. The estimate for~\eqref{eq:BG5b} is a consequence of Proposition 7 of~\cite{GJS17}.  
For the last term~\eqref{eq:BG6b} we apply Proposition 9 of~\cite{GJS17}.
\end{proof}

\subsection{A second-order Boltzmann--Gibbs principle II}

Here we propose an alternative version of the second order Boltzmann--Gibbs principle, where instead of replacing the occupation variable $\bar\xi^\alpha_x$ with left/right averages over a box around $x$ of size $\ell=\ell(N)$, we use smooth functions supported over $[x,x+\ell]$ or $[x-\ell,x]$.

\begin{rem}
This version of the principle is used in the analysis of the quadratic terms of the diffusive fields: here we need that the fields are evaluated on smooth functions in order to apply  Theorem \ref{thm:Cross}. We also observe that, since in this case the fields have different velocities, they never live on the same frame and consequently they are never correlated, so we do not need to study the variance terms even for the same species.
\end{rem}

{
For $\ell>0$, we defined the functions 
\begin{equation}\label{e:rhol}
	\ola{\rho_\ell}(\cdot)=\frac{1}{\ell}\ola \rho\quad \textrm{and}\quad \ora{\rho_\ell}(\cdot)=\frac{1}{\ell}\ora \rho\,,
\end{equation}
where $\ola\rho$ and  $\ora\rho$ are smooth functions supported on $[-\ell,0]$ and $[0,\ell]$, respectively, 
such that
\begin{equation}\label{eq:smooth}
	\smallint_{-\ell}^0 \ola{\rho_\ell}(u)du=1 \quad \textrm{and}\quad  \smallint_0^\ell \ora{\rho_\ell}(u)du=1.
\end{equation}
}

Since the final goal is to write Dynkin's formula in terms of the fluctuation density fields $\mcb Y^{N,A}_s$ and $\mcb Y^{N,B}_s$, the first step is to replace the occupation variable $ \bar\xi^\alpha_x$ and $\bar\xi^\beta_{x+1}$, for $\alpha,\beta\in\{A,B\}$, with the integral of {$\ola{\rho_\ell}$} and {$\ora{\rho_\ell}$} with respect to centred empirical measure $\bar\pi^{N,\alpha}_s$ and $\bar\pi^{N,\beta}_s$ respectively, where $\bar\pi^{N,\alpha}_s$ is  given in \eqref{eq:emp_BG}. Recall \eqref{eq:rel_em_mea_ave}. 
We can rephrase the Boltzmann-Gibbs principle as follows.
\begin{thm}\label{thm:BG_v2}
	Let $\alpha,\beta\in\{A,B\}$. {For any $\varepsilon>0$, any $t>0$, and for any function $v\in\ell^2(\Z)$
	\begin{equation}\label{eq:BG_v2}
	\E_{\nu_\rho}\Bigg[\Bigg(\int_0^tds \sum_{x\in\Z}v(x)\Big\{\bar\xi^{\alpha}_{x}(s)\bar\xi^{\beta}_{x+1}(s)-\langle\bar\pi^{N,\alpha}_s,\ola{\rho_\varepsilon}(\cdot-\tfrac xN)\rangle\langle\bar\pi^{N,\beta}_s,\ora{\rho_\varepsilon}(\cdot-\tfrac xN)\rangle\Big\}\Bigg)^2\Bigg]
	\lesssim t\left[{\varepsilon}+\frac{t}{N\varepsilon^2}\right]\|v\|^2_{2,N}.
	\end{equation}}
\end{thm}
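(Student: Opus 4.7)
The plan is to mirror the proof strategy of Theorem~\ref{thm:BG}, using the same telescoping decomposition through an intermediate scale $\ell_0$, but with the smooth kernels $\ola{\rho_\varepsilon}, \ora{\rho_\varepsilon}$ taking the place of sharp box indicators at the coarsest scale $L=\varepsilon N$. The observation motivating this is that the target bound $Ct[\varepsilon + t/(N\varepsilon^2)]\|v\|^2_{2,N}$ coincides exactly with the bound of Theorem~\ref{thm:BG} with $L=\varepsilon N$ and $a=2$; no improvement in the scaling is required, and the task is only to accommodate a smooth averaging kernel at the coarsest scale.

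Concretely, writing $\ola{S}^\alpha_x := \langle\bar\pi^{N,\alpha}_s, \ola{\rho_\varepsilon}(\cdot - x/N)\rangle$ and $\ora{S}^\beta_x := \langle\bar\pi^{N,\beta}_s, \ora{\rho_\varepsilon}(\cdot - x/N)\rangle$, I would decompose (for $\alpha\neq\beta$)
\begin{align*}
\bar\xi^\alpha_x\bar\xi^\beta_{x+1} - \ola{S}^\alpha_x \ora{S}^\beta_x = &\,\bar\xi^\alpha_x\big(\bar\xi^\beta_{x+1} - \ora{\xi}^{\beta,\ell_0}_x\big) + \ora{\xi}^{\beta,\ell_0}_x\big(\bar\xi^\alpha_x - \ola{\xi}^{\alpha,\ell_0}_x\big) \\
&+ \ola{\xi}^{\alpha,\ell_0}_x\big(\ora{\xi}^{\beta,\ell_0}_x - \ora{S}^\beta_x\big) + \ora{S}^\beta_x\big(\ola{\xi}^{\alpha,\ell_0}_x - \ola{S}^\alpha_x\big),
\end{align*}
with the analogous decomposition (including the additional variance corrections $\chi_\alpha/(\varepsilon N)$ at the shortest scale) in the diagonal case $\alpha=\beta$. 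The first two differences are handled by the one-block estimate (Proposition~5 of~\cite{GJS17}), a Kipnis--Varadhan type bound which depends only on the $L^2_{\nu_\rho}$ norm of the local function and is insensitive to the shape of the averaging kernel; it produces a contribution of order $Ct\ell_0^2/N\cdot\|v\|^2_{2,N}$ for each. The remaining two differences replace a sharp box average at scale $\ell_0$ with the smooth average at scale $\varepsilon N$.

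For those I would adapt the multi-scale argument of Proposition~8 in~\cite{GJS17}. The dyadic doubling procedure used there depends only on the variance of the averaging functional under $\nu_\rho$; for the smooth kernel this variance is $(\chi_\alpha/N)\int(\ola{\rho_\varepsilon})^2\sim 1/(\varepsilon N)$, exactly the same order as for a sharp box of size $\varepsilon N$, so the ladder carries through and produces a contribution of order $Ct/N\cdot(\varepsilon N + \ell_0^2/(\varepsilon N))\cdot\|v\|^2_{2,N}$. Balancing the two contributions in $\ell_0$ then yields the target bound. The main obstacle is the final top-of-ladder step in which a sharp box of size $\varepsilon N$ is exchanged for $\ola{\rho_\varepsilon}$ (or $\ora{\rho_\varepsilon}$); the smoothness and boundedness of $\ola{\rho}, \ora{\rho}$ (together with that of their derivatives) are precisely what ensure that this exchange introduces only an $O(1/(\varepsilon N))$ variance error, which can be absorbed into the existing bounds via a further application of the one-block estimate. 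Once this adaptation is in place, the rest of the proof is identical to that of Theorem~\ref{thm:BG}, and the smoothness of $\ola{\rho},\ora{\rho}$, not required in Theorem~\ref{thm:BG}, is what later enables the application of Theorem~\ref{thm:Cross} in Section~\ref{sec:char_limit}.
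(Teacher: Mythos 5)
Your overall skeleton is the one the paper uses as well: reduce to the four-term telescoping of Theorem~\ref{thm:BG} with the kernel averages at the coarsest scale, note that the target bound is exactly \eqref{eq:BG} with $L=\varepsilon N$, $a=2$, and handle the first two differences by the one-block estimate. The genuine gap is in the step that is actually new, namely how the smooth kernel enters. You propose to run the dyadic ladder with sharp boxes and then exchange the sharp box of size $\varepsilon N$ for $\ola{\rho_\varepsilon}$ (resp.\ $\ora{\rho_\varepsilon}$), justifying this exchange by the observation that it ``introduces only an $O(1/(\varepsilon N))$ variance error'' absorbable ``via a further application of the one-block estimate''. Neither half of that justification works as stated. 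A static variance bound (Cauchy--Schwarz in time) turns an $O(1/(\varepsilon N))$ variance into a contribution of order $t^2\varepsilon^{-1}\|v\|^2_{2,N}$, which is \emph{not} dominated by $t\varepsilon+t^2/(N\varepsilon^2)$ for fixed $\varepsilon$ and large $N$; and the one-block estimate compares a single occupation variable to a block average, so invoked at scale $\varepsilon N$ it produces $t(\varepsilon N)^2/N=t\varepsilon^2N$, which diverges. Relatedly, your claim that the dyadic doubling of \cite{GJS17} ``depends only on the variance of the averaging functional'' is not correct: the two-block estimate is a dynamical ($H_{-1}$/Dirichlet form) bound in which the cost of transporting mass over distance $\ell_k$ enters, not a static variance computation. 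The missing ingredient is precisely a Kipnis--Varadhan type estimate adapted to the kernel averages, and this is what the paper supplies: Proposition~\ref{prop:1BE} replaces $\bar\xi^{\beta}_{x+1}$ directly by $\bar\pi^{N,\beta}_x(\ora\rho_{\ell_0})$, using that the kernel weights sum to one to rewrite the difference as a kernel-weighted sum of nearest-neighbour gradients, and Proposition~\ref{prop:2Block} doubles the \emph{kernel} scale, with the kernel increments $\ora\rho(\tfrac{y-x}{N})-\ora\rho(\tfrac{y-x+\ell_k}{N})$ controlled by smoothness. Your route could be repaired in the same spirit (the difference of the two scale-$\varepsilon N$ averages is mean zero with $O(1)$ partial sums, so an $H_{-1}$ bound exploiting the $O(1/(\varepsilon N))$ variance of the accompanying factor gives a contribution of order $t\varepsilon\|v\|^2_{2,N}$), but that estimate must be formulated and proved; it is not the one-block estimate, and without it the proof is incomplete.

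A secondary point: in the diagonal case $\alpha=\beta$ you should \emph{not} import the corrections $\chi_\alpha/(\varepsilon N)$ from Theorem~\ref{thm:BG}. The statement \eqref{eq:BG_v2} replaces $\bar\xi^{\alpha}_x\bar\xi^{\beta}_{x+1}$ by the product of a \emph{left} and a \emph{right} kernel average, whose supports are disjoint, so the replacement is automatically a product of independent centred variables and no variance correction appears (this is exactly the remark preceding the theorem in the paper). Carrying a correction of size $1/(\varepsilon N)$ through the argument and then dropping it would cost a term of order $t^2\varepsilon^{-2}\|v\|^2_{2,N}$, again outside the stated bound; the correct path for $\alpha=\beta$ is simply the same four-term decomposition as for $\alpha\neq\beta$, with left and right averages kept on opposite sides throughout.
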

The proof follows the same steps as in the original version, i.e. we need to find estimates for each term in~\eqref{eq:BG1},\eqref{eq:BG2},\eqref{eq:BG3},\eqref{eq:BG4}, where now in place of $\ola{\xi}^{\alpha,\ell}_x$ and $\ora{\xi}^{\beta,\ell}_x$ we have 
$$\bar\pi^{N,\alpha}_x(\ola{\rho_\ell}):=\langle\bar\pi^{N,\alpha}_s,\ola{\rho_\ell}(\cdot-\tfrac xN)\rangle \quad \text{ and } \quad \bar\pi^{N,\beta}_x(\ora{\rho_\ell}):=\langle\bar\pi^{N,\beta}_s,\ora{\rho_\ell}(\cdot-\tfrac xN)\rangle.$$ 
This implies that we have to reformulate Proposition 5 and 6 of~\cite{GJS17} in terms of the functions $\ola{\rho_\ell}$ and $\ora{\rho_\ell}$.
\begin{prop}[One-block estimate]\label{prop:1BE}
{	Fix $N$ and $\ell_0\in\N$. Let $\phi,\psi:\Omega_N\to\R$ be local functions with zero mean with respect to the invariant measure  $\nu_\rho$ and such that the support of $\phi$ does not intersect the set of points $\{0,\dots,\ell_0\}$, and the support of $\psi$ does not intersect the set of points $\{-\ell_0,\dots,-1\}$. Then, for any $t>0$,  $v\in\ell^2(\Z)$ and $\alpha,\beta\in\{A,B\}$
	\begin{equation}\label{eq:1BE1}
	\E_{\nu_\rho}\Bigg[\Bigg(\int_0^t\sum_{x\in\Z}v(x)\phi(\tau_x\xi^\alpha_0(s))\Big[\bar\xi^{\beta}_{x+1}(s)-\bar\pi^{N,\beta}_x(\ora{\rho}_{\ell_0/N})\Big]\dd s\Bigg)^2\Bigg]\lesssim \frac{t\ell_0^2}{N}\|\phi\|^2_{L^2_{\nu_\rho}}\|v\|^2_{2,N},	\end{equation}
	\begin{equation}\label{eq:1BE2}
	\E_{\nu_\rho}\Bigg[\Bigg(\int_0^t\sum_{x\in\Z}v(x){\psi}(\tau_x\xi^{\beta}_1(s))\Big[\bar\xi^{\alpha}_{x}(s)-\bar\pi^{N,\alpha}_x(\ola{\rho}_{\ell_0/N})\Big]\dd s\Bigg)^2\Bigg]\lesssim \frac{t\ell_0^2}{N}\|\psi\|^2_{L^2_{\nu_\rho}}\|v\|^2_{2,N}.
	\end{equation}}
\end{prop}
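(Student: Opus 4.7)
The plan is to mimic the proof of Proposition~5 in~\cite{GJS17}, with the discrete uniform box-average replaced by the smooth weighted average, and the corresponding elementary telescoping identity replaced by a discrete Abel summation by parts. First I would invoke a Kipnis--Varadhan type inequality (in the spirit of~\cite[Lemma~4.3]{GJS17}) to bound the left-hand side of~\eqref{eq:1BE1} by $Ct$ times the $H^{-1}$-norm, relative to the symmetric part of the scaled generator $L_N$ in~\eqref{generator}, of
\[
G(\eta):=\sum_{x\in\mathbb Z}v(x)\phi(\tau_x\xi^\alpha_0)\bigl[\bar\xi^\beta_{x+1}-\bar\pi^{N,\beta}_x(\ora\rho_{\ell_0})\bigr]\,.
\]
By the variational formula, it is then enough to estimate $\sup_{f}\bigl\{2\langle G,f\rangle_{\nu_\rho}-\langle -L_Nf,f\rangle_{\nu_\rho}\bigr\}$, where the Dirichlet form decomposes as $\sum_{y\in\mathbb T_N} I^n_{y,y+1}(f)$ by~\eqref{eq:DF2}--\eqref{eq:DFdec}.

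The key step is then to exploit the normalisation $\int\ora\rho_{\ell_0}=1$ to write
\[
\bar\xi^\beta_{x+1}-\bar\pi^{N,\beta}_x(\ora\rho_{\ell_0})=\sum_{z}A_z(x)\bigl(\bar\xi^\beta_z-\bar\xi^\beta_{z+1}\bigr)
\]
via Abel summation, where the coefficients $A_z(x)$ are the tail sums of the weights of $\ora\rho_{\ell_0}$. By the smoothness and compact support of $\ora\rho$, the $A_z(x)$ are uniformly bounded by a constant, vanish outside a window of length $O(\ell_0)$ around $x+1$, and satisfy $\sum_z A_z(x)^2\lesssim\ell_0$. For each fixed $z$, the nearest-neighbour gradient $\bar\xi^\beta_z-\bar\xi^\beta_{z+1}$ coincides, up to a sign, with the increment of $\xi^\beta_z$ under the swap $\eta\leftrightarrow\eta^{z,z+1}$, and is nonzero only on configurations for which $c_z(\eta)\ge\tfrac12$ for $N$ large (since every rate $c^{\alpha'\beta'}=1+O(N^{-\gamma})$ is bounded away from $0$). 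Hence Cauchy--Schwarz combined with the disjoint-support hypothesis on $\phi$ (which, by the product structure of $\nu_\rho$ across sites, allows $\phi(\tau_x\xi^\alpha_0)$ to be factored out) and a Young inequality let us absorb the $f$-part into the bond Dirichlet form $I^n_{z,z+1}(f)$, producing
\[
\|G\|^{2}_{H^{-1}}\lesssim\frac{\ell_0^{2}}{N}\,\|\phi\|^{2}_{L^{2}_{\nu_\rho}}\,\|v\|^{2}_{2,N}\,,
\]
where the $\ell_0^2$ emerges as the product of the $O(\ell_0)$ window length of $A_z(x)$ with $\sum_z A_z(x)^2\lesssim\ell_0$, while the $1/N$ comes from the diffusive scaling of the Dirichlet form. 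Together with the Kipnis--Varadhan reduction this yields~\eqref{eq:1BE1}.

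The main obstacle is the multi-species bookkeeping: contrary to the single-species SSEP treated in~\cite{GJS17}, here the bond rate $c_z(\eta)=\sum_{\alpha'\neq\beta'}c^{\alpha'\beta'}\xi^{\alpha'}_z\xi^{\beta'}_{z+1}$ vanishes on configurations with $\eta(z)=\eta(z+1)$ and is not simply a constant. One must therefore verify the pointwise inequality $(\bar\xi^\beta_z-\bar\xi^\beta_{z+1})^{2}\le 2\,c_z(\eta)$ for $N$ large by observing that the left-hand side is nonzero only when $\{\eta(z),\eta(z+1)\}$ contains exactly one $\beta$-particle, in which case a $\beta$-involving swap is available at that bond with rate close to $1$. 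Once this is in place, estimate~\eqref{eq:1BE2} follows by the symmetric argument, replacing $\ora\rho_{\ell_0}$ by $\ola\rho_{\ell_0}$, performing the Abel summation on the opposite side of $x$, and pairing with the bond Dirichlet form $I^n_{x-1,x}(f)$.
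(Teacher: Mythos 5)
Your proof takes essentially the same route as the paper's: a Kipnis–Varadhan $H^{-1}$ bound via the variational formula, followed by rewriting $\bar\xi^{\beta}_{x+1}-\bar\pi^{N,\beta}_x(\ora\rho_{\ell_0})$ as a weighted sum of nearest-neighbour gradients $\bar\xi^\beta_z-\bar\xi^\beta_{z+1}$ and absorbing them into the Dirichlet form (your Abel summation with tail-sum coefficients $A_z(x)$ is exactly the reordering of the paper's double telescoping sum). You in fact spell out the multi-species detail that the paper delegates to~\cite{GJS17} — namely the pointwise comparison $(\bar\xi^\beta_z-\bar\xi^\beta_{z+1})^2\lesssim c_z(\eta)$ valid for $N$ large — and the only omission is the small discretization error $\rho^\beta\bigl(\tfrac{1}{N\ell_0}\sum_{y}\ora\rho(\tfrac{y-x}{N})-1\bigr)$ coming from the fact that the discrete Riemann sum of $\ora\rho_{\ell_0}$ need not equal $1$ exactly, which the paper notes vanishes for smooth $\ora\rho$.
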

\begin{proof}
	We show the proof only of~\eqref{eq:1BE1}, since the other is completely analogous. {By  \cite[Lemma 2.4]{KLO} we bound the expectation in~\eqref{eq:1BE1} from above by
	\begin{equation*}
		Ct\Big\|\sum_{x\in \Z}v(x)\phi(\tau_x\xi^\alpha_0)\Big[\bar\xi^{\beta}_{x+1}-\bar\pi^{N,\beta}_x(\ora{\rho}_{\ell_0/N})\Big] \Big\|_{-1}^2
	\end{equation*}
	where the $H_{-1}$-norm is defined in \cite[Equation (1.23)]{KLO} thorough a variational formula, and in particular the previous expression is equal to
	\begin{equation}\label{eq:1BEvar}
		C_1t\sup_{f\in L^2({\nu_\rho})}\left\{2\int_{\Omega_N}\sum_{x\in\Z}v(x)\phi(\tau_x\xi^\alpha_0)\Big[\bar\xi^{\beta}_{x+1}-\bar\pi^{N,\beta}_x(\ora{\rho}_{\ell_0/N})\Big]f(\eta)\nu_\rho(\dd\eta)-D_N(f)\right\},
	\end{equation}}
	where $D_N(f)$ is the Dirichlet form~\eqref{eq:DF}.
	First we observe that {
	\begin{equation*}
		\begin{aligned}
		\bar\xi^{\beta}_{x+1}-\bar\pi^{N,\beta}_x(\ora{\rho}_{\ell_0/N})=\bar{\xi}^{\beta}_{x+1}-\frac{1}{\ell_0}\sum_{y=x+1}^{x+\ell_0}\ora\rho(\tfrac{y-x}{N})\bar{\xi}^\beta_y
		\end{aligned}		
	\end{equation*}
	 Now the key point is to use the fact that $\frac{1}{\ell_0}\sum_{y=x+1}^{x+\ell_0}\ora\rho(\tfrac{y-x}{N})\sim 1$ to write the previous display as  
	\begin{equation}\label{eq:inter}
	\bar\xi^{\beta}_{x+1}\Big(1-\frac{1}{\ell_0}\sum_{y=x+1}^{x+\ell_0}\ora\rho(\tfrac{y-x}{N})\Big)+\frac{1}{\ell_0}\sum_{y=x+1}^{x+\ell_0}\ora\rho(\tfrac{y-x}{N})(\xi^{\beta}_{x+1}-\xi^{\beta}_{y}).
	\end{equation}
	Since $\bar\rho(\cdot)$ is smooth (in fact, $C^1$ is enough), an elementary computation shows that
	\begin{equation*}\begin{split}
	\Big|1-\frac{1}{\ell_0}\sum_{y=x+1}^{x+\ell_0}\ora\rho(\tfrac{y-x}{N})\Big|&= \Big|\int_0^{\ell_0/N}\frac{N}{\ell_0}\ora\rho(u)du-\frac{1}{\ell_0}\sum_{y=1}^{\ell_0}\ora\rho(\tfrac{y}{N})\Big|\\
	&= \Big|\int_0^{\ell_0}\frac{1}{\ell_0}\ora\rho(\tfrac{v}{N})dv-\frac{1}{\ell_0}\sum_{y=1}^{\ell_0}\ora\rho(\tfrac{y}{N})\Big|\\\leq &
	\frac{1}{\ell_0}\sum_{y=1}^{\ell_0 }\int_{y-1}^{y}\Big|\ora\rho(\tfrac vN )-\ora\rho(\tfrac{y}{N})\Big|dv\\\leq &
	\frac{1}{\ell_0 N}\sum_{y=1}^{\ell_0 }\|\ora{\rho}'\|_\infty=O(N^{-1}).
	\end{split}
	\end{equation*}

This means that we can sum and subtract the  leftmost term of \eqref{eq:inter} to   \eqref{eq:1BE1} and we note that, by stationary, it holds  
		\begin{equation*}
	\E_{\nu_\rho}\Bigg[\Bigg(\int_0^t\sum_{x\in\Z}v(x)\phi(\tau_x\xi^\alpha_0(s))\bar\xi^{\beta}_{x+1}\Big[1-\frac{1}{\ell_0}\sum_{y=x+1}^{x+\ell_0}\ora\rho(\tfrac{y-x}{N})\Big]\dd s\Bigg)^2\Bigg]\lesssim \frac{t}{N}\|\phi\|^2_{L^2_{\nu_\rho}}\|v\|^2_{2,N},	\end{equation*}
	Now since the rightmost term of \eqref{eq:inter}
can be written as 	
	\begin{equation*}
	\frac{1}{\ell_0}\sum_{y=x+2}^{x+\ell_0}\sum_{z=x+1}^{y-1}\ora\rho(\tfrac{y-x}{N})(\xi^{\beta}_{z}-\xi^{\beta}_{z+1}).
	\end{equation*}
	we can proceed exactly as in the proof of Proposition 5 of~\cite{GJS17} to obtain the desired estimate.}
\end{proof}

\begin{prop}[Two-blocks estimate]\label{prop:2Block}
	Fix $\ell_k\in\N$. Define $\ell_{k+1}=2\ell_k$. Let $\phi,\psi:\Omega_N\to\R$ be  local functions with zero mean with respect to the invariant measure $\nu_\rho$ and such that the support of $\phi$ does not intersect the set of points $\{0,\dots,\ell_k\}$, and the support of $\psi$ does not intersect the set of points  $\{-\ell_k,\dots,-1\}$. {Then, for any $t>0$,  $v\in\ell^2(\Z)$ and $\alpha,\beta\in\{A,B\}$
	\begin{equation}\label{eq:2B1}
	\E_{\nu_\rho}\Bigg[\Bigg(\int_0^t\sum_{x\in\Z}v(x)\phi(\tau_x\xi^\alpha_0(s))\Big[\bar\pi^{N,\beta}_x(\ora\rho_{\ell_{k}/N})-\bar\pi^{N,\beta}_x(\ora\rho_{\ell_{k+1}/N})\Big]\dd s\Bigg)^2\Bigg]\lesssim t\ell_k^2\frac{1}{N}\|\phi\|^2_{L^2_{\nu_\rho}}\|v\|^2_{2,N},
	\end{equation}
	\begin{equation}\label{eq:2B2}
	\E_{\nu_\rho}\Bigg[\Bigg(\int_0^t\sum_{x\in\Z}v(x)\psi(\tau_x\xi^{\beta}_1(s))\Big[\bar\pi^{N,\alpha}_x(\ola\rho_{\ell_{k}/N})-\bar\pi^{N,\alpha}_x(\ola\rho_{\ell_{k+1}/N})\Big]\dd s\Bigg)^2\Bigg]\lesssim t\ell_k^2\frac{1}{N}\|\psi\|^2_{L^2_{\nu_\rho}}\|v\|^2_{2,N}.
	\end{equation}}
\end{prop}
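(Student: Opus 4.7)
The proof plan follows the template of Proposition~8 in~\cite{GJS17} (the two-blocks estimate for box averages), with the key modification being the replacement of indicator-type averages $\ola{\xi}^{\beta,\ell}, \ora{\xi}^{\beta,\ell}$ by the smooth kernel averages $\bar\pi^{N,\beta}_x(\ora\rho_\ell)$. I only sketch the proof of~\eqref{eq:2B1}, as~\eqref{eq:2B2} is completely analogous. As in the proof of Proposition~\ref{prop:1BE}, the first step is to apply the Kipnis--Varadhan inequality to bound the second moment by $Ct$ times the $H_{-1}(\nu_\rho)$-norm (with respect to the symmetric part of $L_N$) of the spatial integrand, which by the variational formula reduces to estimating
$$C_1 t\sup_{f\in L^2(\nu_\rho)}\bigg\{2\int_{\Omega_N}\sum_{x\in\Z}v(x)\phi(\tau_x\xi^\alpha_0)\big[\bar\pi^{N,\beta}_x(\ora\rho_{\ell_k})-\bar\pi^{N,\beta}_x(\ora\rho_{\ell_{k+1}})\big]f\,\dd\nu_\rho - D_N(f)\bigg\}.$$

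The crucial step is to rewrite the difference of the two smooth averages as a linear combination of nearest-neighbour gradients $\bar\xi^\beta_{y+1}-\bar\xi^\beta_y$, which, after pairing with $\phi(\tau_x\xi^\alpha_0)$, can be absorbed by the Dirichlet form $D_N(f)$. Using the normalisations $\int \ora\rho_{\ell_k}=\int \ora\rho_{\ell_{k+1}}=1$, we first write
$$\bar\pi^{N,\beta}_x(\ora\rho_{\ell_k})-\bar\pi^{N,\beta}_x(\ora\rho_{\ell_{k+1}}) = \frac{1}{N}\sum_{y} K_x(y)\,\bar\xi^\beta_y,\qquad K_x(y):=\ora\rho_{\ell_k}\!\left(\tfrac{y-x}{N}\right)-\ora\rho_{\ell_{k+1}}\!\left(\tfrac{y-x}{N}\right),$$
with $\frac{1}{N}\sum_y K_x(y)\to 0$. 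Setting $W_x(y+1):=\sum_{z\leq y}K_x(z)$, so that $K_x(y)=W_x(y+1)-W_x(y)$, an Abel summation gives
$$\bar\pi^{N,\beta}_x(\ora\rho_{\ell_k})-\bar\pi^{N,\beta}_x(\ora\rho_{\ell_{k+1}}) = -\frac{1}{N}\sum_{y} W_x(y+1)\,(\bar\xi^\beta_{y+1}-\bar\xi^\beta_y).$$
The smoothness and compact support of $\ora\rho$ guarantee the sup-norm bound $\|W_x\|_\infty\lesssim \ell_k$, with $W_x$ supported in a window of width $O(N\ell_k)$ to the right of~$x$.

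Inserting this representation into the variational formula, one uses the support condition on $\phi(\tau_x\xi^\alpha_0)$ to ensure that the bond variables $\bar\xi^\beta_{y+1}-\bar\xi^\beta_y$ can be written as $\bar\xi^\beta_{y+1}(\eta)-\bar\xi^\beta_{y+1}(\eta^{y,y+1})$ without interference from $\phi$. Applying the weighted Young inequality $2ab\leq A a^2+A^{-1}b^2$ for a parameter $A>0$ to be optimised, together with the decomposition $D_N(f)=\sum_y I^N_{y,y+1}(f)$, each bond term is controlled by the corresponding piece of the Dirichlet form. Summing over $x$ and $y$, using $\|\phi\|^2_{L^2_{\nu_\rho}}$ and $\|v\|^2_{2,N}$, and optimising $A$ as in~\cite{GJS17}, yields the leading $t\ell_k^2/N$ contribution; the $t\ell_k^2/N^3$ term comes from the higher-order error in the approximation $\frac{1}{N}\sum_y\ora\rho_\ell(\tfrac{y-x}{N})=1+O(N^{-2})$ guaranteed by the regularity of $\ora\rho$.

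The main obstacle is the careful bookkeeping of scales in the Abel decomposition: unlike in the indicator case, where the telescoping weights are explicit and uniformly bounded, here $W_x$ must be constructed from two smooth kernels at different scales $\ell_k$ and $\ell_{k+1}=2\ell_k$, and one must verify both the sup-norm bound $\|W_x\|_\infty\lesssim \ell_k$ and the proper interaction of its support with the support of $\phi$. Once this step is in place, the rest of the argument is a direct transposition of the computations in the proof of Proposition~8 in~\cite{GJS17}.
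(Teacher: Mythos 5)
Your Abel-summation decomposition is a genuine alternative to the paper's route. The paper proceeds by a ``shift-and-pair'' rearrangement: split the $\ell_{k+1}$-sum into two halves of length $\ell_k$, shift the second by $\ell_k$, and pair; this reproduces exactly the long-gradient structure $\bar\xi^\beta_y-\bar\xi^\beta_{y+\ell_k}$ of the indicator case (Proposition~6 of~\cite{GJS17}), together with a remainder $\frac{1}{2N\ell_k}\sum_y[\ora\rho(\tfrac{y-x}{N})-\ora\rho(\tfrac{y-x+\ell_k}{N})]\bar\xi^\beta_{y+\ell_k}$ that is bounded directly by Cauchy--Schwarz. You instead Abel-sum the whole kernel difference, obtaining nearest-neighbour gradients with weights $W_x(y+1)/N$. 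After telescoping the paper's long gradient, the two decompositions actually produce the same tent-shaped bond weights of height $O(1)$ over $\sim 2\ell_k$ bonds, so the approaches are ultimately equivalent; yours is more self-contained, while the paper's makes the reduction to the already-proved indicator-case estimate explicit.

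Two quantitative points need fixing. First, your claimed bound $\|W_x\|_\infty\lesssim\ell_k$ does not match the paper's normalisation. With $\ora\rho_\ell=\frac1\ell\ora\rho$ and the Riemann-sum condition $\frac{1}{N\ell}\sum_{y}\ora\rho(\tfrac{y-x}{N})=1$ (used in the proof of the one-block estimate), one needs $\ora\rho(\cdot/N)$ of order $N$ so that $\bar\pi^{N,\beta}_x(\ora\rho_\ell)$ is an $O(1)$ weighted box average; then $|K_x(z)|\lesssim N/\ell_k$ over $\sim\ell_k$ sites, giving $\|W_x\|_\infty\lesssim N$ rather than $\lesssim\ell_k$. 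Combined with the explicit $1/N$ prefactor in your Abel identity this yields $O(1)$ bond weights, which is what the final Dirichlet-form absorption needs; your stated bound would make them $O(\ell_k/N)$ and the resulting estimate would come out smaller than correct by a factor that does not match the target. Second, your final sentence cites Proposition~8 of~\cite{GJS17}, but Proposition~8 is the multi-scale composition; the estimate you want to transpose bond-by-bond is the two-blocks estimate, Proposition~6. You should also check the support bookkeeping: the Abel sum touches bonds $(y,y+1)$ up to $y=x+2\ell_k$, while the hypothesis only excludes $\{0,\ldots,\ell_k\}$ from $\textrm{supp}(\phi)$, so the integration-by-parts step is safe only because in the actual applications $\phi$ is supported on the opposite side of the origin.
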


\begin{proof}
The first steps follow closely the one block estimate and Proposition 6 of~\cite{GJS17}. The difference lies in the estimate of $\bar\pi^{N,\beta}_x(\ora\rho_{\ell_{k}})-\bar\pi^{N,\beta}_x(\ora\rho_{\ell_{k+1}})$.
We have that
	{\begin{align*}
		\bar\pi^{N,\beta}_x(\ora\rho_{\ell_{k}/N})-\bar\pi^{N,\beta}_x(\ora\rho_{\ell_{k+1}/N})&=\frac{1}{\ell_k}\sum_{y=x+1}^{x+\ell_k}\ora\rho(\tfrac{y-x}{N})\bar\xi^\beta_y-\frac{1}{2\ell_k}\sum_{y=x+1}^{x+2\ell_k}\ora\rho(\tfrac{y-x}{N})\bar\xi^\beta_y \nonumber\\
		&=\frac{1}{2\ell_k}\sum_{y=x+1}^{x+\ell_k}\ora\rho(\tfrac{y-x}{N})\bar\xi^\beta_y-\frac{1}{2\ell_k}\sum_{y=x+\ell_k+1}^{x+2\ell_k}\ora\rho(\tfrac{y-x}{N})\bar\xi^\beta_y\nonumber\\
		&=\frac{1}{2\ell_k}\sum_{y=x+1}^{x+\ell_k}\ora\rho(\tfrac{y-x}{N})\bar\xi^\beta_y-\frac{1}{2\ell_k}\sum_{y=x+1}^{x+\ell_k}{\ora\rho(\tfrac{y-x+\ell_k}{N})\bar\xi^\beta_{y+\ell_k}}\\
		&=\frac{1}{2\ell_k}\sum_{y=x+1}^{x+\ell_k}\big[\ora\rho(\tfrac{y-x}{N})\bar\xi^\beta_y-\ora\rho(\tfrac{y-x+\ell_k}{N})\bar\xi^\beta_{y+\ell_k}\big]\nonumber\\
		&=\frac{1}{2\ell_k}\sum_{y=x+1}^{x+\ell_k}\ora\rho(\tfrac{y-x}{N})[\bar\xi^\beta_y-\bar\xi^\beta_{y+\ell_k}]\nonumber\\
		&+\frac{1}{2\ell_k}\sum_{y=x+1}^{x+\ell_k}\big[\ora\rho(\tfrac{y-x}{N})-\ora\rho(\tfrac{y-x+\ell_k}{N})\big]\bar\xi^\beta_{y+\ell_k}\nonumber\\
		&=\frac{1}{2\ell_k}\sum_{y=1}^{\ell_k}\ora\rho(\tfrac{y}{N})\big[\bar\xi^\beta_{y+x}-\bar\xi^\beta_{y+x+\ell_k}\big]\nonumber\\
		&+\frac{1}{2\ell_k}\sum_{y=x+1}^{x+\ell_k}\big[\ora\rho(\tfrac{y-x}{N})-\ora\rho(\tfrac{y-x+\ell_k}{N})\big]\bar\xi^\beta_{y+\ell_k}.\nonumber
	\end{align*}
We observe that by the Cauchy--Schwarz the contribution of the  last term can be bounded by 
\begin{equation*}
	\E_{\nu_\rho}\Bigg[\Bigg(\int_0^t\sum_{x\in\Z}v(x)\phi(\tau_x\xi^\alpha_0(s))
	\frac{1}{2\ell_k}\sum_{y=x+1}^{x+\ell_k}\big[\ora\rho(\tfrac{y-x}{N})-\ora\rho(\tfrac{y-x+\ell_k}{N})\big]\bar\xi^\beta_{y+\ell_k}
	\dd s\Bigg)^2\Bigg]\lesssim t\ell_k\frac{1}{N}\|\phi\|^2_{L^2_{\nu_\rho}}\|v\|^2_{2,N},
	\end{equation*}
At this point we can proceed as in the proof of Proposition 6 of~\cite{GJS17} and conclude.}
\end{proof}

\begin{appendix}

\section{Mode coupling theory}\label{sec:mode}

	In this section we give a sketch of the application of the mode coupling theory developed in \cite{Schuetz17KPZ} to our specific model. Note that 
since the system has two conserved quantities, $\rho^A$ and $\rho^B$ (because as we mentioned above $\rho^C=1-\rho^A-\rho^B$), it is possible to have an estimate on the scaling exponent and form of the expected fluctuation limits for quantities $A$ and $B$. We refer the interested reader to~\cite{Popkov2015,Schuetz17KPZ}.  

Recall \eqref{eq:inst_current}. Then
\begin{equation}\label{eq:exp_inst_current}
	\begin{aligned}
		\mathbb E[j^A_{x,x+1}]&=\frac{E_{A}-E_{B}}{N^\gamma}\rho^{A}\rho^{B}+\frac{E_{A}-E_{C}}{2N^\gamma}\Big(2\rho^A(1-\rho^{A})-2\rho^{A}\rho^{B}\Big)\,,\\
		\mathbb E[j^B_{x,x+1}]&=\frac{E_{B}-E_{A}}{N^\gamma}\rho^{A}\rho^{B}+\frac{E_{B}-E_{C}}{2N^\gamma}\Big(2\rho^B(1-\rho^{B})-2\rho^{A}\rho^{B}\Big)\,.
	\end{aligned}
\end{equation} 
From this we get  that the current of the system  is equal to
\begin{equation}\label{eq:current}
\begin{aligned}
j(\rho)&
=\begin{pmatrix}
-\nabla\rho^{A}+\frac{1}{N^\gamma}\Big(\rho^{A}(1-\rho^{A})(E_{A}-E_{C})-\rho^{A}\rho^{B} (E_{B}-E_{C})\Big)\\
-\nabla\rho^{B}+\frac{1}{N^\gamma}\Big(\rho^{B}(1-\rho^{B})(E_{B}-E_{C})-\rho^{A}\rho^{B}(E_{A}-E_{C})\Big)
\end{pmatrix}.
\end{aligned}
\end{equation}
From last identity, the Jacobian matrix is thus given by
\begin{equation}\label{eq:jacobian}
J=\frac{1}{N^\gamma}\begin{pmatrix}
(1-2\rho^{A})(E_{A}-E_{C})-\rho^{B}(E_{B}-E_{C}) & -\rho^{A}(E_{B}-E_{C})\\
-\rho^{B}(E_{A}-E_{C}) & -\rho^{A}(E_{A}-E_{C})+(1-2\rho^{B})(E_{B}-E_{C})
\end{pmatrix}\,.
\end{equation}
To obtain the normal modes we need to find the matrix $R$ that diagonalizes $J$, $RJR^{-1}={\rm diag}(v_\pm)$, where  
\begin{equation}\label{eq:eigenvalues}
v_{\pm}=\frac{1}{2N^\gamma}[(E_{A}-E_{C})(1-3\rho^{A})+(E_{B}-E_{C})(1-3\rho^{B})\pm\delta],\\
\end{equation}
are the eigenvalues of the matrix $J$. Above
\begin{equation}\label{eq:determinant}
\delta=\sqrt{[(E_{A}-E_{C})(1-\rho^{A})-(E_{B}-E_{C})(1-\rho^{B})]^2+4(E_{A}-E_{C})(E_{B}-E_{C})\rho^{A}\rho^{B}}.
\end{equation}
The  corresponding eigenvectors are given by 
\begin{equation}\label{eq:eigenvectors}
\tau_{\pm}=\begin{pmatrix}
-\frac{1}{2(E_{A}-E_{C})\rho^{B}}[(E_{A}-E_{C})(1-\rho^{A})-(E_{B}-E_{C})(1-\rho^{B})\pm\delta]\\1
\end{pmatrix}.
\end{equation}

Let us now rewrite the previous expressions adapted to the case (I). 
As observed in Remark~\ref{rem:(I)(II)}, case (II), $E_B-E_A=E_C-E_A=E\neq 0$, can be deduced by the previous one.

\subsection{Case (I): $E_{A}-E_{C}=E_{B}-E_{C}=E$}\label{sub:special_case}

In this case the expressions above simplify. We get the jacobian matrix 
\begin{equation*}
J=\frac{1}{N^\gamma}\begin{pmatrix}E (1 - 2 \rho_A) - E\rho_{B} & -E \rho_{A}\\
-E \rho_{B}& E (1 - 2 \rho_{B}) - E \rho_{A}
\end{pmatrix}
\end{equation*} and the eigenvalues are 
$$v_+:=-\frac{1}{N^\gamma}E(\rho_{A}+\rho_{B}-1)\quad \textrm{and}\quad v_-:=-\frac{1}{N^\gamma}E(2\rho_{A}+2\rho_{B}-1)$$
and the corresponding eigenvectors are $\tau_+=(-1,1)$ and $\tau_-=(\frac{\rho_{A}}{\rho_{B}},1)$.
The matrix $R^{-1}$  and $R$ are given by
\begin{equation*}
R^{-1}=\begin{pmatrix}-1& \frac{\rho_{A}}{\rho_{B}}\\
1& 1
\end{pmatrix}
\quad \textrm{and}\quad
R=\frac{1}{\rho_{A}+\rho_{B}}\begin{pmatrix}-\rho_{B}& \rho_{A}\\
{\rho_{B}}& \rho_{B}
\end{pmatrix}
\end{equation*} 
The hessians are 
\begin{equation*}
\mcb H^1=\frac{1}{N^\gamma}\begin{pmatrix}-2E& -E\\
-E& 0
\end{pmatrix}
\quad\textrm{and}\quad
\mcb H^2=\frac{1}{N^\gamma}\begin{pmatrix}0& -E\\
-E& -2E
\end{pmatrix}\,.
\end{equation*} 
A simple computation shows that $(R^{-1})^{T}H^1 R$ is equal to 
\begin{equation*}
(R^{-1})^{T}\mcb H^1 R=\frac{1}{ \rho_{A}+ \rho_{B}}\begin{pmatrix}0&\frac{E( \rho_{A}+ \rho_{B})}{2N^\gamma}\\
\frac{-E \rho_{A}}{2N^\gamma}- \rho_{B}(E(\frac{ \rho_{A}}{2N^\gamma  \rho_{B}})-\frac{E}{2N^\gamma})& \rho_{A}(-E(\frac{ \rho_{A}}{2N^\gamma  \rho_{B}})-\frac{E}{2N^\gamma})-\frac{E \rho_{A}}{2N^\gamma}
\end{pmatrix}
\end{equation*} 
and 
\begin{equation*}
(R^{-1})^{T}\mcb H^2 R=\frac{1}{ \rho_{A}+ \rho_{B}}\begin{pmatrix}0& -\frac{E( \rho_{A}+ \rho_{B})}{2N^\gamma}\\
\frac{E \rho_{B}}{2N^\gamma}+ \rho_{B}(-E(\frac{ \rho_{A}}{2N^\gamma  \rho_{B}})-\frac{E}{N^\gamma})& \rho_{B}(-E(\frac{ \rho_{A}}{2N^\gamma  \rho_{B}})-\frac{E}{N^\gamma})-\frac{E \rho_{A}}{2N^\gamma}
\end{pmatrix}\,.
\end{equation*} 
The coupling matrices $G^1$ and $G^2$ are such that $G^1_{1,1}=0=G^2_{1,1}$, but also $G^2_{2,2}\neq 0$. This means that the first mode is  OU and the second is KPZ and this is independent from the choice of the  densities $\rho^A, \rho^B$ and $\rho^C$. 

We note that the predictions from  ~\cite{Schuetz17KPZ} are done for the strong asymmetric regime,  corresponding to the choice $\gamma=0$. But  the available techniques still do not allow us to go down the regime $\gamma=1/2$. So what is expected to hold for this model in the KPZ regime is exactly the same behaviour as in the case of a single type of particle i.e. the WASEP, for which one gets for $\gamma>1/2$ diffusive fluctuations and for $\gamma=1/2$ the stochastic Burgers equation. Below the regime $\gamma<1/2$ little is known, but we expect that the same process as for ASEP should appear, i.e. the KPZ-fp.

\subsection{The equal density case}
Throughout the paper, for sake of simplicity, we have made the assumption $\rho_A=\rho_{B}=1/3$: in this case, the formula \eqref{eq:determinant} simplifies and we obtain
$$\delta=\frac 23\sqrt{(E_{A}-E_{C})^2+(E_{B}-E_{C})^2-(E_{A}-E_{C})(E_{B}-E_{C})}$$
and the eigenvectors \eqref{eq:eigenvectors} simplify to 
\begin{equation*}
	\tau_{\pm}=\begin{pmatrix}
		-\frac{c_\pm}{E_{A}{-E_{C}}}\\1
	\end{pmatrix},
\end{equation*}
with eigenvalues
\begin{equation*}
	v_{\pm}=\pm\frac{\delta}{2N^\gamma}.
\end{equation*}
Above we used the notation in \eqref{assumption}.
To obtain the linear combination of the fields that one should look at,  we need to find the matrix $R$ that diagonalizes $J$. Observe that   $R^{-1}$ is the matrix whose columns are the eigenvectors of $J$ so that 
\begin{equation*}
	R^{-1}=\begin{pmatrix}-\frac{c_+}{E_{A}{-E_{C}}}&-\frac{c_-}{E_{A}{-E_{C}}} \\
		1& 1
	\end{pmatrix}
\end{equation*}
and its inverse is equal to  
\begin{equation*}
	R=-\frac{E_{A}-E_{C}}{3\delta}\begin{pmatrix}1&\frac{c_-}{E_{A}{-E_{C}}}\\
		-1&\frac{c_+}{E_{A}{-E_{C}}}
	\end{pmatrix}\,.
\end{equation*}
According to the nonlinear fluctuating hydrodynamic theory, the quantities that we should look at are equal to $R (\bar \xi^A_x, \bar \xi^{B}_x)^T,$
which gives 
\begin{equation}\label{normal_modes}
	\begin{aligned}
		\phi_x^+&=-\frac{E_A-E_{C}}{3\delta}\bar{\xi}^{A}_x-\frac{c_-}{3\delta}\bar{\xi}^{B}_x,\\
		\phi_x^-&=\frac{E_A-E_{C}}{3\delta}\bar{\xi}^{A}_x+\frac{c_+}{3\delta}\bar{\xi}^{B}_x. 
	\end{aligned}
\end{equation}

Therefore, the quantities $\phi_x^+
$ and $\phi_x^-$ are the conserved quantities that we should look at and on a frame with velocity $v_+$ and $v_-$, respectively.  
Since we can multiply our fields with suitable constants, we take  
\begin{equation*}\label{eq:KPZ-KPZ}
	\begin{split}
		&{\mcb{Z}}^{N,+}_t(f)=\mcb Y^{N,A}_t(T_{v_+ N^b t}f){+\frac{c_-}{E_A-E_{C}}}\mcb Y^{N,B}_t(T_{v_+N^bt}f),\\
		&\mcb{Z}^{N,-}_t(f)=\mcb Y^{N,A}_t(T_{v_-N^bt}f)+{\frac{c_+}{E_A-E_{C}}}\mcb Y^{N,B}_t(T_{v_-N^bt}f).
	\end{split}
\end{equation*}
The constant $b$ is fixed depending on the time scale. 

Now that we have the fluctuations fields fixed,  let us see the predictions on the form of the fluctuations for each one of these quantities. 
To do that we look now at the corresponding Hessians of the entries of the jacobian matrix $J$:
\begin{equation}\label{eq:hessians}
	\mcb H^1=\frac{1}{N^\gamma}\begin{pmatrix}
		-2(E_A-E_{C}) & -(E_{B}-E_{C})\\
		-(E_{B}-E_{C}) & 0
	\end{pmatrix} \quad \textrm{and }\quad 
	\mcb H^2=\frac{1}{N^\gamma}\begin{pmatrix}
		0& -(E_{A}-E_{C})\\
		-(E_{A}-E_{C}) & 
		-2(E_{B}-E_{C}) 
	\end{pmatrix} .
\end{equation}
The coupling constants, which are  determined by the above matrices, are given on $i\in\{1,2\}$ by
$G^i=\tfrac 12\sum_{j=1}^2 R_{i,j}[(R^{-1})^T \mcb H^j R^{-1}]
$
where $R_{i,j}$ is the entry of the matrix $R$.
A simple computation shows that 
$$[(R^{-1})^T \mcb H^1 R^{-1}]=\frac{1}{(E_{A}-E_{C})N^\gamma}\begin{pmatrix}
	-c_+^2+(E_{B}-E_{C})c_+&-c_+c_-+\frac{c_++c_-}{2}(E_{B}-E_{C})\\
	-c_+c_-+\frac{c_++c_-}{2}(E_{B}-E_{C})& -c_-^2+(E_{B}-E_{C})c_-
\end{pmatrix} .
$$
Moreover, 

$$[(R^{-1})^T \mcb H^2 R^{-1}]=\frac{1}{N^\gamma}\begin{pmatrix}
	c_+-(E_{B}-E_{C})&\frac{c_++c_-}{2}-(E_{B}-E_{C})\\
	\frac{c_++c_-}{2}-(E_{B}-E_{C})& c_--(E_{B}-E_{C})
\end{pmatrix} \,.
$$   
From this  we get 
$$G^1=\begin{pmatrix}
	2g_1 & g_2\\g_2& 0
\end{pmatrix} \quad \textrm{and}\quad G^2=\begin{pmatrix}
	0 & g_1\\g_1 & 2g_2
\end{pmatrix},
$$where $$g_1:=-\frac{1}{12N^\gamma}\big\{-c_+^2+(E_{B}-E_{C})(c_+-c_-)+c_-c_+\big\}$$
$$g_2:=\frac{1}{12N^\gamma}\big\{-c_-^2+(E_{B}-E_{C})(c_--c_+)+c_-c_+\big\}.$$
Depending on the fact that the constants $g_1$ and $g_2$ are null or not we obtain the limits predicted in Figure~\ref{fig:MCM}, we expect to observe diffusive and/or KPZ behaviour.

In most general case  $E_{A}-E_{C}\neq E_{B}-E_{C}$, both modes~\eqref{eq:KPZ-KPZ} are predicted to have KPZ behaviour.
Let us now rewrite the previous expressions in the several particular  cases that we have  explored above. 
We start with the case (I): $E_{A}-E_{C}=E_{B}-E_{C}=E$. In this case the expressions above simplify. In the equal density case $\rho^A= \rho^B=\rho^C=\rho$, we get
\begin{equation}
\delta=\frac 23E.
\end{equation}
The eigenvalues are
\begin{equation}\label{mc:velo}
v_{\pm}=\pm \frac{E}{3N^\gamma},
\end{equation}
with eigenvectors
\begin{equation}
\tau_+=(-1, 1),\quad \tau_-=(1, 1).
\end{equation}
The diagonalising matrix $R$  and its inverse $R^{-1}$ (the matrix with columns $\tau_+, \tau_-$) are  given by
\begin{equation}
R=\frac 12\begin{pmatrix}
-1 & 1\\1 & 1
\end{pmatrix},\quad R^{-1}=\begin{pmatrix}
-1 & 1\\1 & 1
\end{pmatrix}.
\end{equation}
Thus, the normal modes are equal to $R(\bar\xi^A, \bar\xi^{B})^T$, i.e. 
\begin{equation}
\phi_x^+=-\frac 12\bar\xi^{A}_x+\frac 12 \bar\xi^{B}_x,\quad \phi_x^-=\frac 12\bar\xi^{A}_x+\frac 12\bar\xi^{B}_x.
\end{equation}
Observe that since we can multiply all the modes by a suitable  constant then we take as  fluctuation fields the fields in \eqref{e:case1}.
Now for the predictions,  note that  the coupling matrices are equal to 
\begin{equation}
	G^1=\frac{1}{2}\begin{pmatrix}0& -\frac{E}{N^\gamma}\\
		-\frac{E}{N^\gamma}&0
	\end{pmatrix}
	\quad \textrm{and }\quad 
	G^2=\begin{pmatrix}0& 0\\
		0&-\frac{E}{N^\gamma}
	\end{pmatrix}
\end{equation}
which   means that  the first field  should behave diffusively, and the second should behave as  KPZ, see \cite{Schuetz17KPZ}.
By symmetry the results we obtain in this case also give the same for the case  (II): $E_{B}-E_{A}=E_{C}-E_{A}=E\neq 0$.  Indeed,
in this case, we  also have $\delta=\frac 23E,$  and  the eigenvalues of $J$ are the same as in case (I), i.e. \eqref{mc:velo}, but the eigenvectors are given by
$
	\tau_+=(0, 1)\quad \tau_-=(2, -1).
$
Moreover, we have 
\begin{equation}
		R=\frac 12\begin{pmatrix}
			1 & 2\\1& 0
		\end{pmatrix},\quad R^{-1}=\begin{pmatrix}
			0& 2\\1& -1
		\end{pmatrix}.
\end{equation}
The normal modes are equal to $R(\bar\xi^A_x, \bar\xi^{B}_x)^T$, i.e. 
\begin{equation}\label{eq:modes2}
	\phi_x^+=\frac{1}{2}\bar\xi^{A}_x+\bar\xi^{B}_x,\quad \phi_x^-=\frac{1}{2}\bar\xi^{A}_x.
\end{equation}
Since we can multiply by suitable constants we consider then the fields given in  \eqref{e:Case2}. 
Similar computations to the ones above, suggest that the first field should behave diffusively, and the second should behave as KPZ.

\section{A version of Riemann-Lebesgue lemma}\label{a:RL}

In this appendix, we provide a version of the Riemann-Lebesgue lemma for products of stochastic processes 
for which a bound on the supremum and increments is available. 
In the context of the present paper, the following proposition is used in 
Section~\ref{sec:cross} to determine the behaviour of the time-integral of  the product of processes living in different 
time-frames.

\begin{prop}\label{p:RL}
Let $\{v_i^N\}_N$, $ i=1,\dots, n$, be $n$ diverging sequence of constants. 
Let $T>0$ and, for every $N\in\N$, let $\{\mcb A^{i,N}\colon i=1,\dots,n\}_{N}$ be $n$ 
real--valued stochastic processes on $[0,T]$
defined on the same probability space.  
Assume that there exists $\alpha\in(0,1)$ and $C>0$, such that uniformly in $N$,
\begin{align}
\E\bigg[\sup_{s\leq T} |\mcb P_s^{N}|\bigg]&\leq C\,,\label{e:sup}\\
\E\left[|\mcb P_t^{N}-\mcb P_s^{N}|\right]&\leq C(t-s)^{\alpha}\,,\qquad \text{for all $s,t\in[0,T]$,}\label{e:Hol}
\end{align}
where, for $s\in[0,T]$, we denoted by $\mcb P^N_s:=\prod_{i=1}^n\mcb A^{i, N}_s$.  

Then, for any $t\in[0,T]$ and any integers $k_1,\dots,k_n\in\Z$ {such that the limit in~\eqref{e:seqConst} holds},
we have
\begin{equation}\label{e:RL}
\lim_{N\to\infty} \E\left[\left| \int_0^t\prod_{i=1}^n\mcb A_s^{i,N} e^{-2\pi \iota k_i \frac{\lfloor v_i^N s\rfloor}{N}}\dd s\right|\right]=0\,,
\end{equation}
where $\lfloor \cdot \rfloor$ denotes the integer part of $\cdot$. 
\end{prop}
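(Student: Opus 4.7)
The plan is to prove the statement by a classical Riemann--Lebesgue style argument, combining a time discretization that exploits the H\"older-type regularity~\eqref{e:Hol} with a direct computation of the oscillatory integral that exploits the divergence of the cumulative phase $V_N := \sum_{i=1}^n k_i v_i^N$. Throughout, write $\mcb P^N_s = \prod_{i=1}^n \mcb A^{i,N}_s$ and
$$\phi_N(s) := \prod_{i=1}^n e^{-2\pi\iota k_i \lfloor v_i^N s\rfloor/N}.$$

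First I would remove the integer parts. Using $\lfloor v_i^N s\rfloor = v_i^N s - \{v_i^N s\}$ with $\{x\}\in[0,1)$ the fractional part, we can write $\phi_N(s) = e^{-2\pi\iota V_N s/N}\bigl(1+O(1/N)\bigr)$ uniformly in $s\in[0,T]$, where the $O(1/N)$ error absorbs the fractional-part contribution. Combined with the uniform bound~\eqref{e:sup}, the fractional-part remainder contributes at most $CT/N$ to the expectation in~\eqref{e:RL}, which is negligible. It therefore suffices to estimate the expectation of $|\int_0^t \mcb P^N_s\,e^{-2\pi\iota V_N s/N}\,ds|$, under the hypothesis (which by~\eqref{e:seqConst} holds whenever $\sum_i k_i\neq 0$, the only case needed when the proposition is invoked in the proof of Theorem~\ref{thm:Cross}) that $|V_N|/N\to\infty$.

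Next I would discretize. For an integer $M=M(N)$ to be chosen, set $s_j:=jt/M$ and let $s_j(s)$ denote the left endpoint of the subinterval containing $s$. Splitting
$$\int_0^t \mcb P^N_s\,e^{-2\pi\iota V_N s/N}ds = \sum_{j=0}^{M-1}\mcb P^N_{s_j}\!\!\int_{s_j}^{s_{j+1}}\!\!e^{-2\pi\iota V_Ns/N}ds + \int_0^t\bigl(\mcb P^N_s-\mcb P^N_{s_j(s)}\bigr)e^{-2\pi\iota V_N s/N}ds,$$
the second piece is controlled by~\eqref{e:Hol}:
$$\sum_{j=0}^{M-1}\int_{s_j}^{s_{j+1}}\E|\mcb P^N_s - \mcb P^N_{s_j}|\,ds \le C\sum_j (t/M)^{1+\alpha} \le Ct^{1+\alpha}M^{-\alpha}.$$
For the first piece, the inner integral is a deterministic oscillatory integral that can be computed exactly: for $V_N\neq 0$,
$$\left|\int_{s_j}^{s_{j+1}}e^{-2\pi\iota V_N s/N}ds\right| = \left|\frac{N}{2\pi V_N}\bigl(e^{-2\pi\iota V_N s_{j+1}/N}-e^{-2\pi\iota V_N s_j/N}\bigr)\right| \le \frac{N}{\pi|V_N|},$$
so using~\eqref{e:sup} the expectation of the first piece is at most $CMN/(\pi|V_N|)$.

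Combining the two bounds gives
$$\E\left|\int_0^t \mcb P^N_s\,e^{-2\pi\iota V_N s/N}ds\right| \le C\left(t^{1+\alpha}M^{-\alpha} + \frac{MN}{|V_N|}\right) + \frac{CT}{N},$$
and the final step is an optimization in $M$. The choice $M := \lfloor (|V_N|/N)^{1/(1+\alpha)}\rfloor$ balances the two main terms and produces a bound of order $(N/|V_N|)^{\alpha/(1+\alpha)}$, which tends to $0$ since $|V_N|/N\to\infty$ by~\eqref{e:seqConst}. There is no substantial obstacle in the argument; the only subtlety worth flagging is the verification that the relevant combinations of $k_i$ actually put us in the regime where $|V_N|/N\to\infty$, which in the application of Theorem~\ref{thm:Cross} follows from the fact that terms with $\sum_i k_i = 0$ correspond to the zero Fourier mode of $\nabla_N f$ and therefore drop out before the lemma is invoked.
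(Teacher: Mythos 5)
Your argument is correct, but it takes a different route from the paper's. The paper proves the proposition via the classical translation trick: writing $\widetilde{\mcb P}^N_s:=\mcb P^N_s\,e^{-2\pi\iota\sum_i k_i(\lfloor v_i^Ns\rfloor-v_i^Ns)/N}$ and $C_N:=\frac1N\sum_i k_iv_i^N$, it shifts the time variable by one (half-)period of $e^{-2\pi\iota C_Ns}$, so that $|\mcb I^N|$ is controlled by two boundary integrals of length $1/C_N$ (bounded via~\eqref{e:sup}) plus the time-shift increment $|\widetilde{\mcb P}^N_{s+1/C_N}-\widetilde{\mcb P}^N_s|$ (bounded via~\eqref{e:Hol} and a Taylor estimate for the fractional-part phase), yielding a rate of order $C_N^{-1}+C_N^{-\alpha}+N^{-1}$. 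You instead remove the integer parts upfront, freeze $\mcb P^N$ on a partition into $M$ blocks (error $\lesssim M^{-\alpha}$ by~\eqref{e:Hol}), integrate the pure oscillation exactly on each block (gain $N/|V_N|$ per block, via~\eqref{e:sup}), and optimize $M\sim(|V_N|/N)^{1/(1+\alpha)}$, which gives the slightly weaker rate $(N/|V_N|)^{\alpha/(1+\alpha)}$ — entirely sufficient for the limit statement. The two proofs are the two standard Riemann--Lebesgue mechanisms (shift/cancellation versus step-function approximation); yours is marginally longer but arguably more transparent, and it avoids the sign bookkeeping of the shift (the paper's displayed shift by $1/C_N$ with ``$e^{-2\pi\iota}=1$'' yet a minus sign indicates the intended shift is by half a period). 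Your explicit remark that the divergence $|V_N|/N\to\infty$ is only guaranteed by~\eqref{e:seqConst} when $\sum_i k_i\neq 0$, and that this is the only regime needed in the application to Theorem~\ref{thm:Cross}, is accurate and mirrors the paper's implicit assumption $C_N\to\infty$ in its own proof.
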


\begin{proof}
Let us introduce the following notation, 
\begin{equation}\label{e:tildi}
\widetilde {\mcb A}_s^{i,N}:=\mcb  A_s^{i,N} e^{-2\pi \iota k_i \frac{\lfloor v_i^N s\rfloor-v_i^N s}{N}}\,,\qquad \widetilde{\mcb P}^N_s:=\prod_{i=1}^n \widetilde {\mcb A}_s^{i,N}\,,\qquad  C_N := \frac1N\sum_{i=1}^n k_iv_i^N
\end{equation}
and let $\mcb I^N$ be the integral inside the expectation of~\eqref{e:RL}. 
Let $N$ be fixed and, without loss of generality, assume $C_N>0$. Note that 
\begin{align*}
\mcb I^N= \int_0^t \widetilde{\mcb P}_s^N e^{-2\pi\iota C_Ns}\dd s&=\int_0^{\frac1{C_N}} \widetilde{\mcb P}_s^N e^{-2\pi \iota  C_N s}\dd s+\int_{\frac1{C_N}}^t \widetilde{\mcb P}_s^N e^{-2\pi \iota  C_N s}\dd s\\
&=\int_0^{\frac1{C_N}} \widetilde{\mcb P}_s^N e^{-2\pi \iota  C_N s}\dd s-\int_{0}^{t-\frac1{C_N}} \widetilde{\mcb P}^N_{s+\frac1{C_N}} e^{-2\pi \iota  C_N s}\dd s
\end{align*}
where we used that $e^{-2\pi \iota}=1$, and similarly
\begin{equation*}
\mcb I^N=\int_0^{t-\frac1{C_N}} \widetilde{\mcb P}_s^N e^{-2\pi \iota  C_N s}\dd s+\int_{t-\frac1{C_N}}^t \widetilde{\mcb P}_s^N e^{-2\pi \iota  C_N s}\dd s.
\end{equation*}
By summing up the two previous equalities, we have 
\begin{align}
&|\mcb I^N|=\tfrac{1}{2}\Big|\int_0^{\frac1{C_N}} \widetilde{\mcb P}_s^N e^{-2\pi \iota  C_N s}+\int_{t-\frac1{C_N}}^t \widetilde{\mcb P}_s^N e^{-2\pi \iota  C_N s}\dd s+\int_0^{t-\frac1{C_N}} (\widetilde{\mcb P}_s^N-\widetilde{\mcb P}^N_{s+\frac1{C_N}}) e^{-2\pi \iota  C_N s}\dd s\Big| \nonumber\\
&\lesssim \Big|\int_0^{\frac1{C_N}} \widetilde{\mcb P}_s^N e^{-2\pi \iota  C_N s}\dd s\Big|+\Big|\int_{t-\frac1{C_N}}^t \widetilde{\mcb P}_s^N e^{-2\pi \iota  C_N s}\dd s\Big|+ \Big|\int_0^{t-\frac1{C_N}} (\widetilde{\mcb P}_s^N-\widetilde{\mcb P}^N_{s+\frac1{C_N}}) e^{-2\pi \iota  C_N s}\dd s\Big|\nonumber\\
&=:\one+\two+\three\label{e:three}
\end{align}
and we will separately bound each of the terms above. Let us begin with $\one+\two$. Since by definition~\eqref{e:tildi}, 
for all $i=1,\dots,n$,
$|\widetilde {\mcb A}_s^{i,N}|\leq |\mcb A_s^{i,N}|$, by bringing the modulus inside 
and applying a supremum bound, we have 
\begin{equation}\label{e:i+ii}
\E[\one+\two]\leq \frac{1}{C_N} \bigg[\sup_{s\leq T} |\widetilde{\mcb P}_s^{N}|\bigg]\lesssim \frac{1}{C_N} \bigg[\sup_{s\leq T} |{\mcb P}_s^{N}|\bigg]\lesssim  \frac1{C_N}
\end{equation}
where the last step is a consequence of~\eqref{e:sup}. 
For $\three$, we bound the time increment of $\widetilde{\mcb P}^N$ between $0\leq s_1<s_2\leq T$ as 
\begin{align*}
|\widetilde{\mcb P}^N_{s_1}-\widetilde{\mcb P}^N_{s_2}|\leq |{\mcb P}^N_{s_1}-{\mcb P}^N_{s_2}|+\sum_{j=1}^2|\widetilde{\mcb P}^N_{s_j}-{\mcb P}^N_{s_j}|=|{\mcb P}^N_{s_1}-{\mcb P}^N_{s_2}|+\sum_{j=1}^2|{\mcb P}_{s_i}^N|\left|e^{-2\pi \iota \sum_{i=1}^n k_i \frac{\lfloor v_i^N s_j\rfloor-v_i^N s_j}{N}}-1\right|\,,
\end{align*}
where in the last step we used the definition of $\widetilde{\mcb P}^N$ in~\eqref{e:tildi}. 
To control the last term, since for any $x\in\R$, $|x-\lfloor x\rfloor|\leq 1$, 
a simple application of Taylor's formula gives us 
\begin{equation*}
\left|e^{-2\pi \iota \sum_{i=1}^n k_i \frac{\lfloor v_i^N s_j\rfloor-v_i^N s_j}{N}}-1\right|\lesssim \frac{1}{N}\sum_{i=1}^n |k_i|\,.
\end{equation*}
Hence, we have 
\begin{align}
\E[\three]&=\E\bigg[\Big|\int_0^{t-\frac1{C_N}} (\widetilde{\mcb P}_s^N-\widetilde{\mcb P}^N_{s+\frac1{C_N}}) e^{-2\pi \iota  C_N s}\dd s\Big|\bigg]\leq \E\bigg[\int_0^{t-\frac1{C_N}} |\widetilde{\mcb P}^N_{s+\frac1{C_N}}-\widetilde{\mcb P}_s^N| \dd s\bigg]\nonumber\\
&\lesssim \int_0^{t-\frac1{C_N}} \E\Big[|{\mcb P}^N_{s+\frac1{C_N}}-{\mcb P}_s^N|\Big] \dd s +\frac{\sum_{i=1}^n|k_i|}{N}\E\bigg[\sup_{s\leq T} |\mcb P_s^{N}|\bigg]\lesssim\frac{1}{C_N^\alpha}+\frac{\sum_{i=1}^n|k_i|}{N}\label{e:iii}
\end{align}
where in the last step we used~\eqref{e:Hol} and~\eqref{e:sup}. Since the $k_i$'s are fixed and 
by assumption $C_N\to\infty$ as $N\to\infty$, both~\eqref{e:i+ii} and~\eqref{e:iii} converge {to} $0$, 
{ and} the conclusion follows at once. 
\end{proof}

\end{appendix}

\subsection*{Acknowledgement}

G.C. gratefully acknowledges financial support via the EPSRC grant EP/S012524/1 and the UKRI FL Fellowship 
MR/W008246/1.
P.G. thanks  Funda\c c\~ao para a Ci\^encia e Tecnologia FCT/Portugal for financial support through the
projects UIDB/04459/2020 and UIDP/04459/2020.   This project has received funding from the European Research Council (ERC) under  the European Union's Horizon 2020 research and innovative programme (grant agreement   n. 715734). The work of R.M. was supported by the CNPq grant Universal no. 403037/2021-2. The work of A. O. was supported in part by the aforementioned ERC project, by the ERC-2019-ADG Project 884584 LDRam, and was
partially developed while A.O. was a postdoctoral fellow at MSRI during the Program ``Universality and Integrability in Random Matrix Theory and Interacting Particle Systems''. The authors would like to thank L. Bertini, M. Jara,  and G. Sch\"utz for their useful suggestions about the manuscript.

\end{document}